\newtheorem{thm}[equation]{Theorem}\newtheorem{Thm}[equation]{Theorem}
\newtheorem{Ex}[equation]{Example}
\newtheorem{Q}[equation]{Problem}
\newtheorem{Con}[equation]{Conjecture}\newtheorem{Que}[equation]{Question}
\newtheorem{Def}[equation]{Definition}
\numberwithin{equation}{section}
\numberwithin{equation}{section}
\newcommand{\be}{begin{equation}}
\newcommand{\bH}{\mathbb H}
\newcommand{\q}{\mathbb{Q}}
\newcommand{\e}{\epsilon}
\newcommand{\z}{\mathbb{Z}}
\renewcommand{\q}{\mathbb{Q}}
\newcommand{\N}{\mathbb{N}}
\renewcommand{\c}{\mathbb{C}}
\newcommand{\br}{\mathbb{R}}
\newcommand{{\grinv}}{{\Cal G}^{-r}}
\newcommand{\A}{\mathcal{A}}
\newcommand{\ba}{\backslash}
\newcommand{\G}{\Gamma}
\newcommand{\Cal}{\mathcal}
\renewcommand{\P}{\mathcal P}
\newcommand{\SL}{\operatorname{SL}}
\newcommand{\bp}{\begin{pmatrix}}
\newcommand{\ep}{\end{pmatrix}}
\renewcommand{\bp}{{\rm bp}}
\renewcommand{\O}{\operatorname{O}}
\newcommand{\SO}{\operatorname{SO}}
\newcommand{\SU}{\operatorname{SU}}
\renewcommand{\H}{\mathcal{H}}
\newcommand{\T}{\operatorname{T}}
\newcommand{\B}{\mathcal B}
\newcommand{\PSL}{\op{PSL}}
\newcommand{\PS}{\rm{PS}}
\newcommand{\op}{\operatorname}
\renewcommand{\deg}{\text{DEP}}
\newcommand{\BR}{\operatorname{BR}}
\newcommand{\BMS}{\operatorname{BMS}}
\renewcommand{\be}{\begin{equation}}
\newcommand{\ee}{\end{equation}}
\newcommand{\Res}{\op{Res}}
\begin{document}

\title[Apollonian packings]{Apollonian circle packings: Dynamics and Number theory}

\author{Hee Oh}
\address{Mathematics department, Yale university, New Haven,  CT 06520
and Korea Institute for Advanced Study, Seoul, Korea}
\email{hee.oh@yale.edu}

\begin{abstract}
We give an overview of various counting problems for Apollonian circle packings, which turn out to be related to problems in dynamics
and number theory for thin groups. This survey article is an expanded version of my lecture notes prepared for the 13th Takagi lectures given at RIMS, Kyoto
 in the fall of 2013. 
\end{abstract}

\maketitle
\tableofcontents

\section{Counting problems for Apollonian circle packings}\begin{figure}  \begin{center}
  \includegraphics[height=4cm]{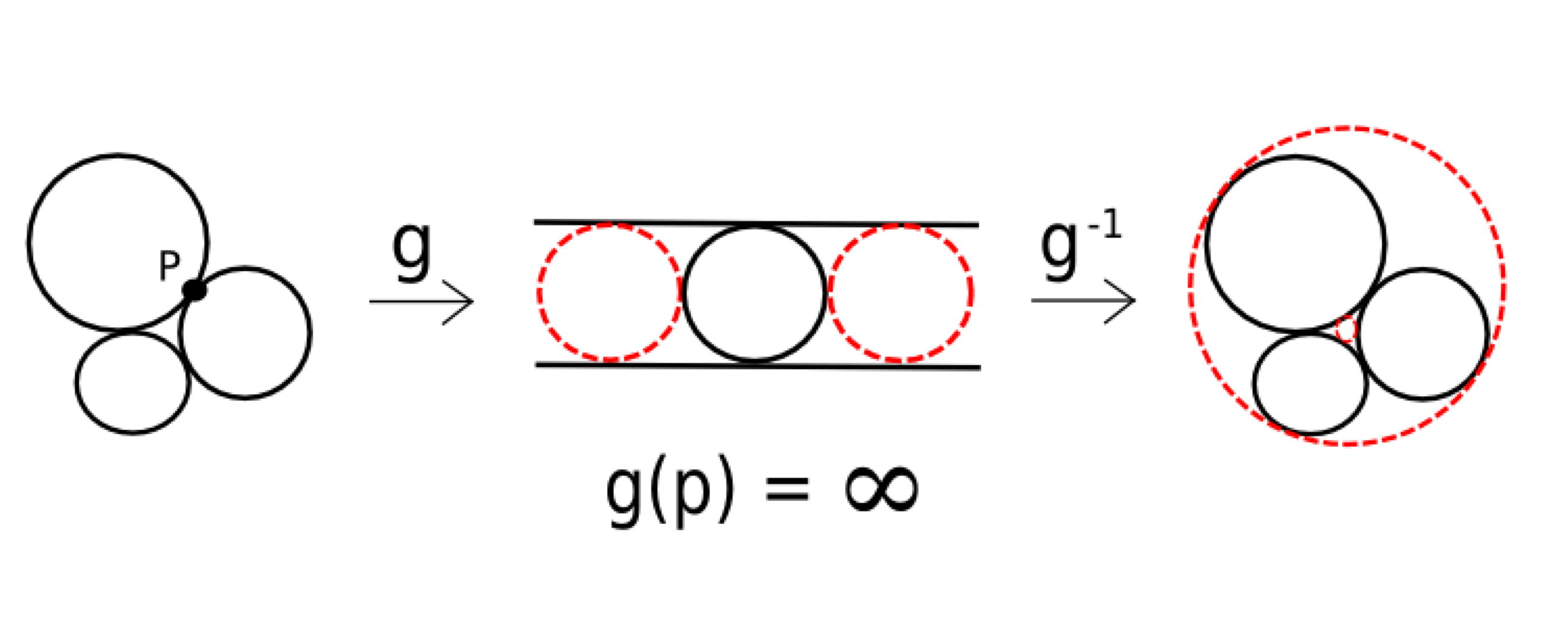}
  \end{center}
  \caption{Pictorial proof of the Apollonius theorem} \label{apr}
\end{figure}
An Apollonian circle packing is one of the most of beautiful circle packings whose construction can be described in a very simple
manner based on an old theorem of
 Apollonius of Perga:
\begin{Thm}[Apollonius of Perga, 262-190 BC]\label{apt}
Given $3$ mutually tangent circles in the plane, there exist exactly two circles tangent to all three.
\end{Thm}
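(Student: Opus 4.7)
The plan is to reduce the configuration to a transparent one by applying an inversion centered at a tangency point, then count solutions in the reduced picture and pull back. Let $C_1, C_2, C_3$ denote the given mutually tangent circles and let $p$ be the tangency point of $C_1$ and $C_2$. My first step would be to apply an inversion of the Riemann sphere $\hat{\c}$ centered at $p$. Since $C_1$ and $C_2$ both pass through $p$ and are tangent there, their images become two parallel straight lines $\ell_1, \ell_2$, the common tangent direction at $p$ being sent to the common point at infinity. The circle $C_3$ does not pass through $p$, so its image $C_3'$ is a bounded circle; being tangent to both $C_1$ and $C_2$, the image $C_3'$ is tangent to both $\ell_1$ and $\ell_2$, which forces its diameter to equal the gap between them and its center to lie on the mid-line.

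In this reduced configuration the count is immediate: any generalized circle tangent to both parallel lines $\ell_1, \ell_2$ is a proper circle of the same diameter as $C_3'$ whose center lies on the mid-line (no line can be tangent to two distinct parallel lines). Among these, the ones that are also tangent to $C_3'$ are exactly the two translates of $C_3'$ along the mid-line whose centers are at distance one diameter from the center of $C_3'$. This yields exactly two solutions, one on each side of $C_3'$.

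It would then remain to transfer the count back through the inversion. The main point to verify is that neither of the two solution circles passes through $p$, so that each pulls back to a genuine circle rather than a line tangent to the original $C_i$; this is clear because both solution circles are bounded and lie strictly inside the strip between $\ell_1$ and $\ell_2$, hence their preimages under the inversion do not contain the inversion center. Injectivity of the M\"obius action then rules out coincidences and ensures preservation of tangencies, so we recover exactly two circles tangent to all three of $C_1, C_2, C_3$. The only mildly delicate step is this pullback check; the rest is routine conformal geometry, and I do not expect any genuine obstacle to completing the argument along these lines.
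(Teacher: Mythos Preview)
Your proposal is correct and follows essentially the same approach as the paper: reduce via a M\"obius transformation (you use an inversion) sending the tangency point $p$ of $C_1$ and $C_2$ to $\infty$, count the two obvious solutions in the resulting parallel-line configuration, and pull back. Your write-up is in fact a bit more careful than the paper's, since you explicitly justify the count in the reduced picture and verify that the two solution circles avoid the inversion center so that they pull back to genuine circles rather than lines.
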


\begin{proof}
We give a modern proof,
using  the linear fractional transformations of $\PSL_2(\c)$ on the extended complex plane $\hat \c=\c\cup\{\infty\}$, known as  M\"obius transformations:
$$\begin{pmatrix} a&b\\ c &d\end{pmatrix}(z)=\frac{az+b}{cz+d},$$
where $a,b,c,d\in \c$ with $ad-bc=1$ and $z\in \c\cup\{\infty\}$. 
As is well known, a M\"obius transformation maps
circles in $\hat \c$ to circles in $\hat \c$, preserving angles between them. (In the whole article, a line in $\c$ is treated as a circle in $\hat \c$).
In particular, it maps tangent circles to tangent circles. 

For given three mutually tangent circles $C_1, C_2, C_3$ in the plane, denote by $p$ the tangent point between $C_1$ and $C_2$, and 
let $g\in \PSL_2(\c)$ be an element which maps $p$ to $\infty$. Then $g$ maps $C_1$ and $C_2$ to two circles tangent at $\infty$, that is, two parallel lines, and
$g(C_3)$ is a circle tangent to these parallel lines. In the configuration of $g(C_1)$, $g(C_2)$, $g(C_3)$ (see Fig. \ref{apr}),
it is clear that there are precisely two circles, say, $D$ and $D'$ tangent to all three $g(C_i)$, $1\le i \le 3$.
Using $g^{-1}$, which is again a M\"obius transformation, it follows that $g^{-1}(D)$ and $g^{-1}(D')$ are precisely those two circles tangent to
$C_1$,$C_2$,$C_3$.
\end{proof}

\begin{figure}
\begin{center}
   \includegraphics[width=4in]{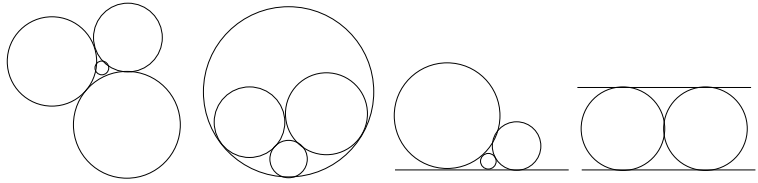}
 \end{center}
 \caption{Possible configurations of four mutually tangent circles}  \label{Fourcircles}
  \end{figure}

In order to construct an Apollonian circle packing, we begin with four mutually tangent circles in the plane (see  Figure \ref{Fourcircles} for
possible configurations) and keep adding newer circles tangent to three of the previous circles provided by Theorem \ref{apt}. Continuing this process
indefinitely, we arrive at an infinite circle packing,
called an {\it Apollonian circle packing}.

Figure \ref{stage} shows the first few generations of this process, where each circle is labeled with
its curvature (= the reciprocal of its radius) with the normalization that
 the greatest circle has radius one.

\begin{figure}
  \begin{center}
   \includegraphics[width=1in]{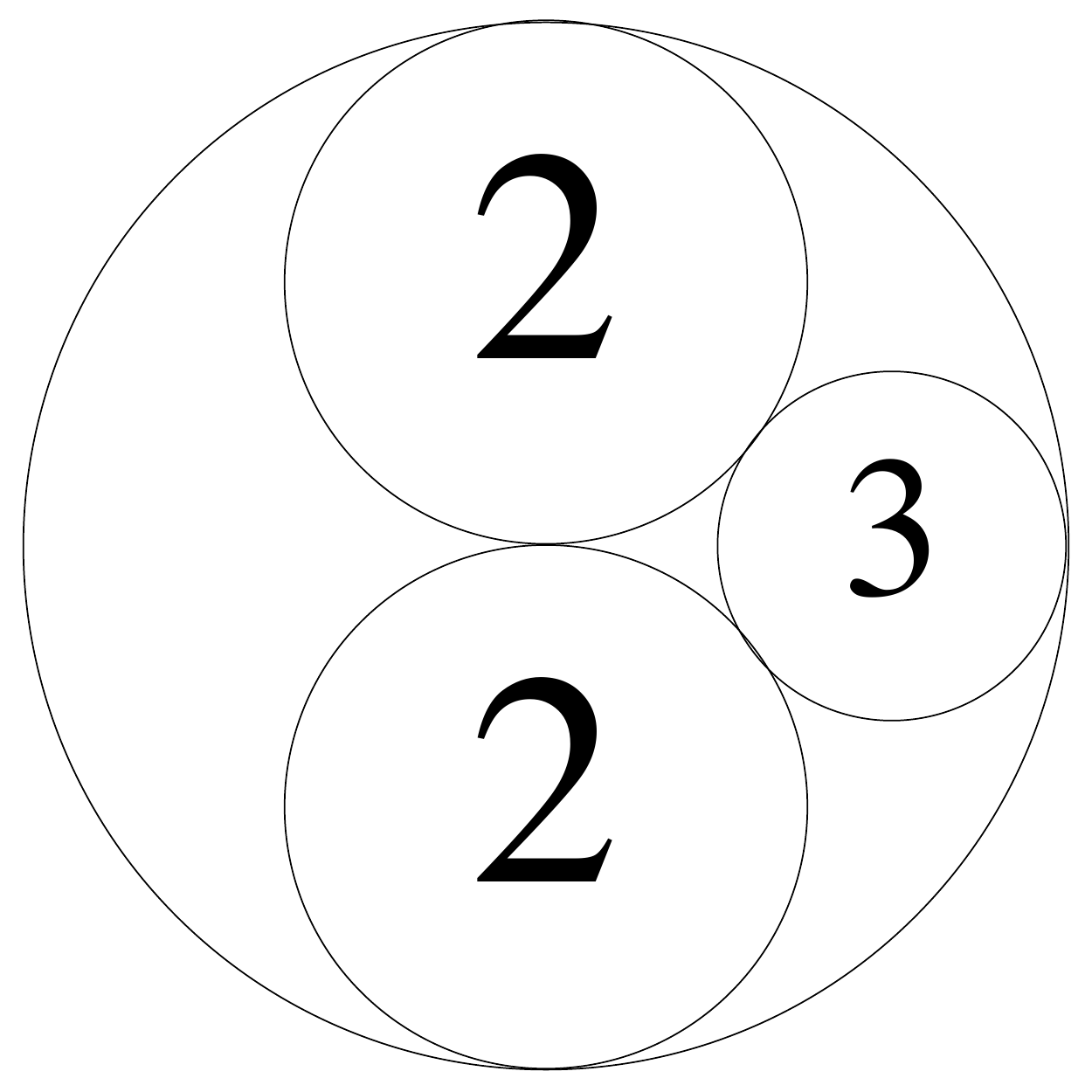}
    \includegraphics[width=1in]{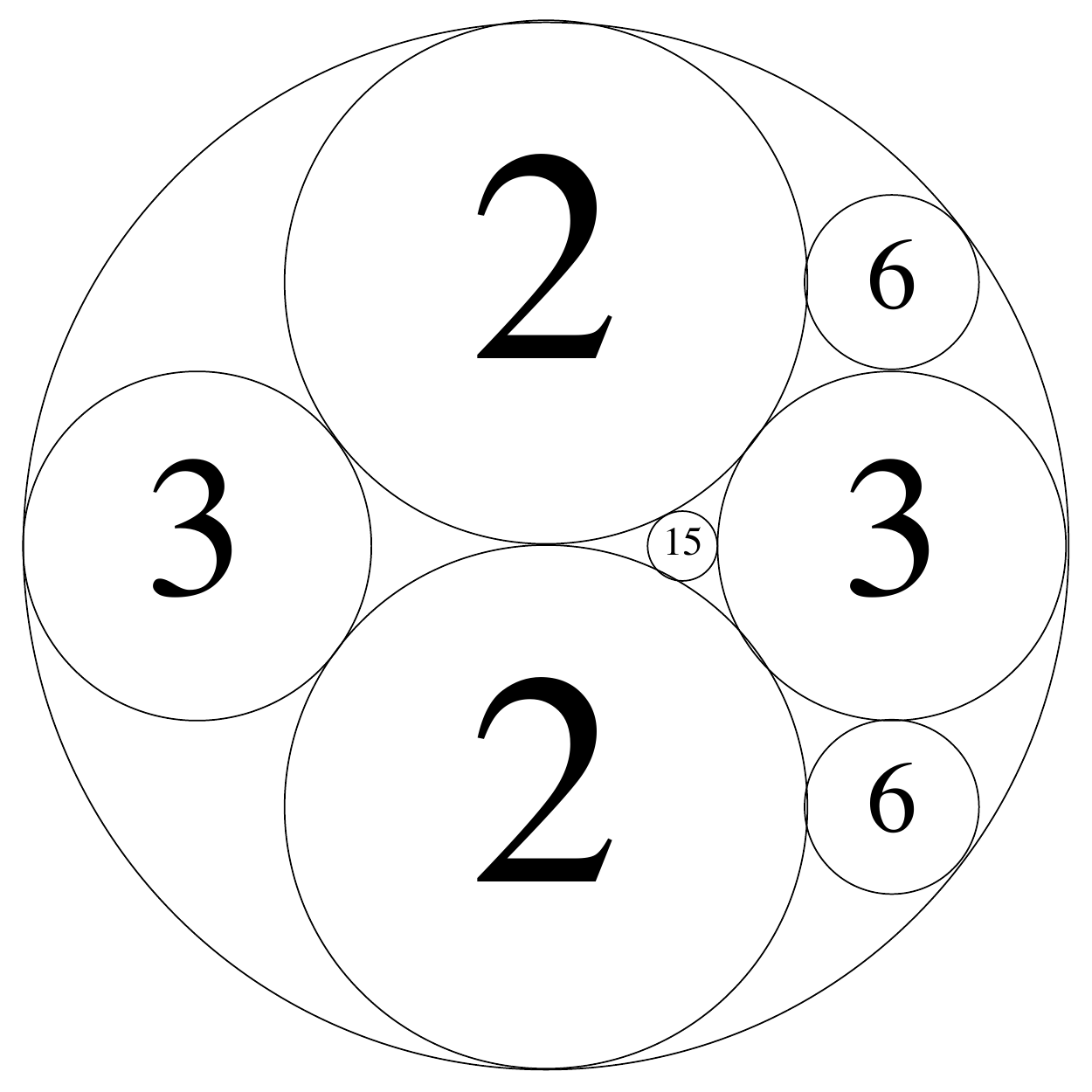}
 \includegraphics[width=1in]{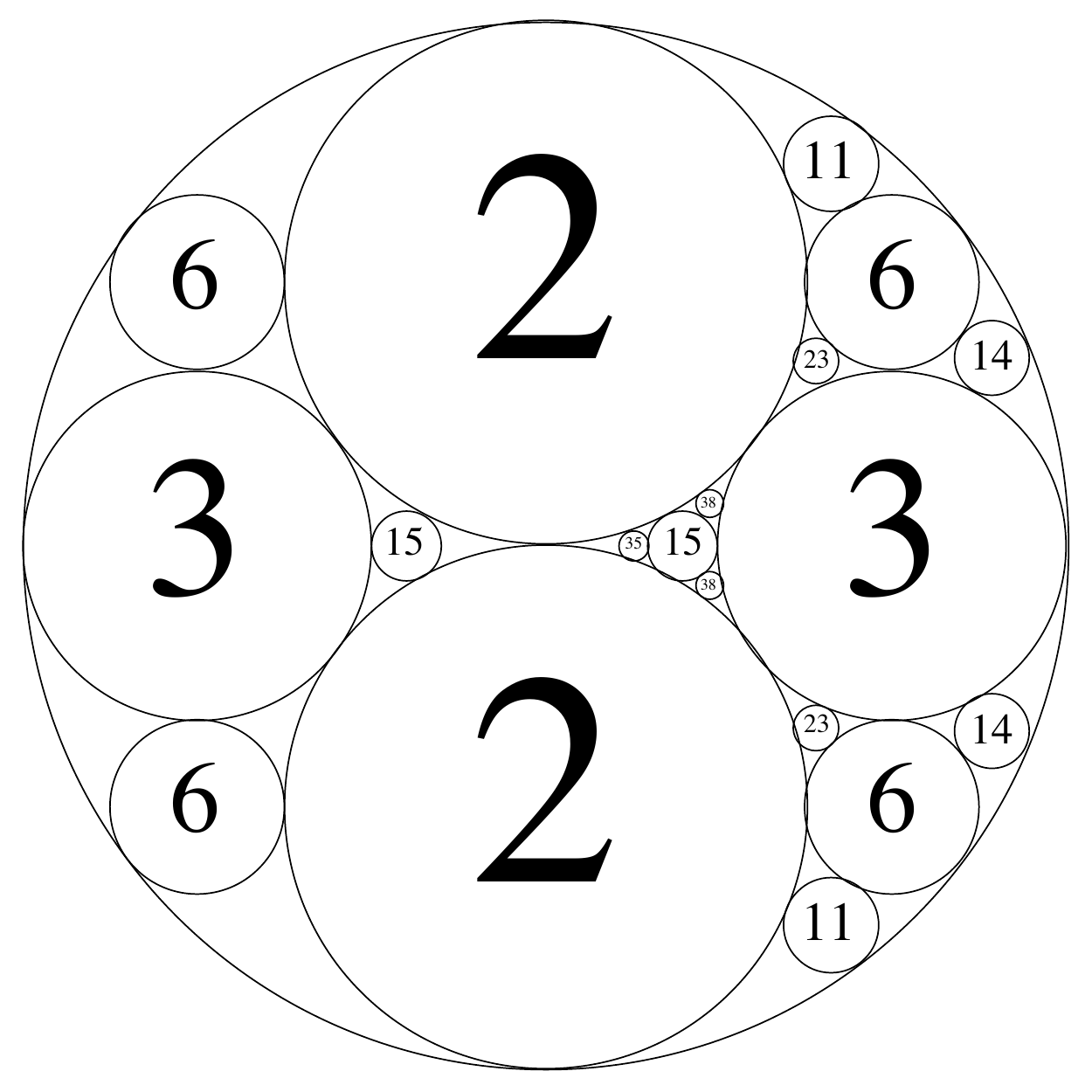}
 \includegraphics [width=1in]{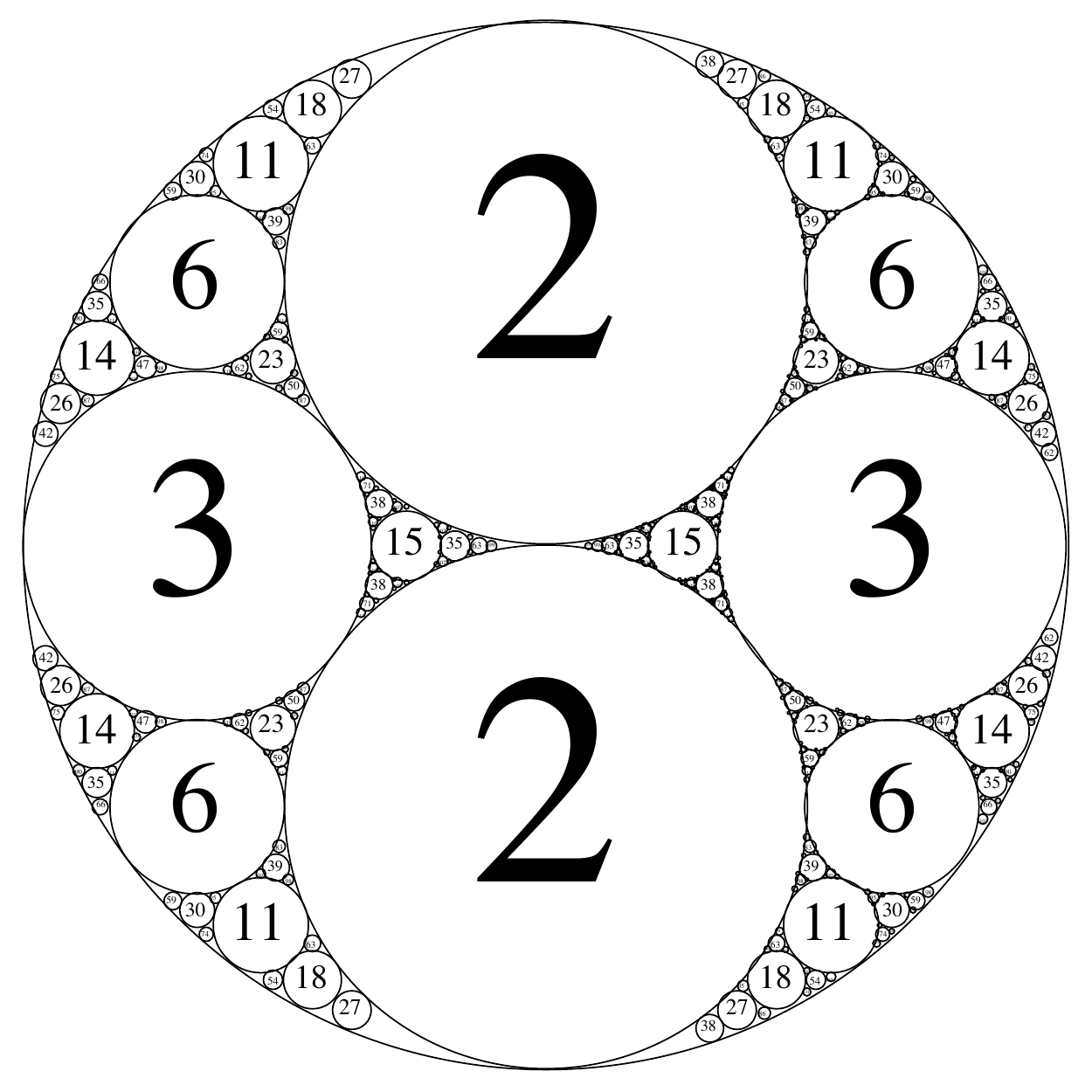}
\caption{First few generations}\label{stage} \end{center}
\end{figure}

 \begin{figure}\begin{center}
\includegraphics [width=2in]{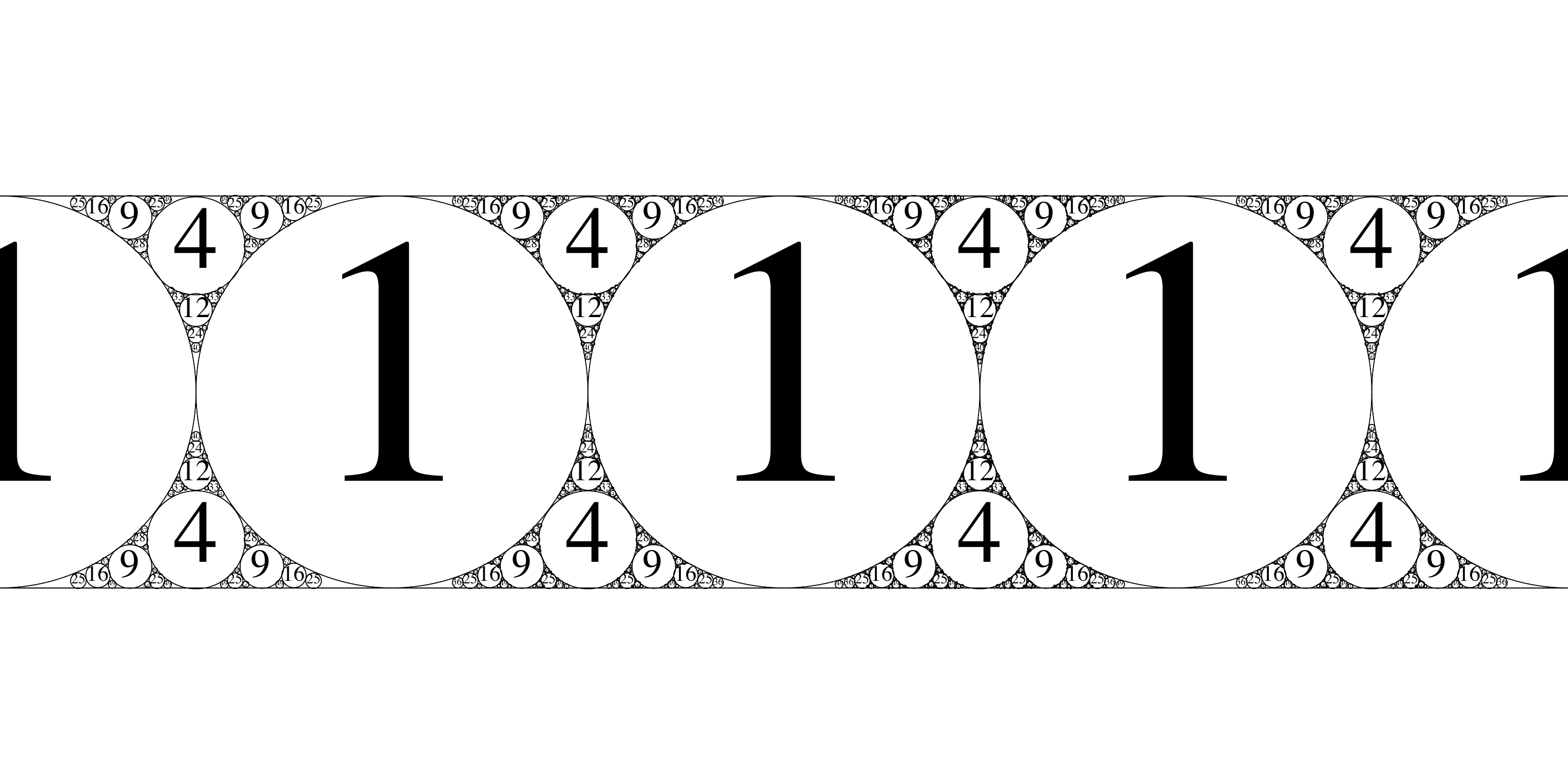}\caption{Unbounded Apollonian circle packing} \label{ea}
 \end{center}
\end{figure}

If we had started with a configuration containing two parallel lines, we would have arrived at an unbounded Apollonian 
circle packing as in Figure \ref{ea}.
There are also other unbounded Apollonian packings containing
either only one line or no line at all; but it will be hard to draw them in a paper with finite size, as circles will get enormously large only after a few generations.

For a bounded Apollonian packing $\mathcal P$,
there are only finitely many circles of radius bigger than a given number. Hence the following counting function is well-defined for any $T>0$:
$$N_{\mathcal P}(T):=\#\{C\in \mathcal P:\op{curv}(C)\le  T\} .$$

\begin{Que}
\begin{itemize}
\item Is there an asymptotic formula of $N_{\mathcal P}(T)$ as $T\to \infty$?
\item If so, can we compute?
\end{itemize}\end{Que}

The study of this question involves notions related to metric properties of the underlying fractal set called
a {\it residual set}:
$$\op{Res}({\P}):=\overline{\cup_{C\in \mathcal P} C} , $$
i.e., the residual set of $\P$ is the closure in $\c$ of the union of all circles in $\P$.


The Hausdorff dimension
 of the residual set of ${\P}$ is called
the  {\it residual dimension of ${\P}$}, which we denote
by $ \alpha $. The notion of the Hausdorff dimension was first given by Hausdorff in 1918. 
To explain its definition, we first recall the notion of the Hausdorff measure (cf. \cite{Mat}):
\begin{Def} 
Let $s\ge 0$ and $F$ be any subset of $ \mathbb R^n$. The $s$-dimensional Hausdorff measure  of $F$ is defined by
$$\mathcal H^s(F):=\lim_{\epsilon \to 0} \left(\inf \{\sum d(B_i)^s: F\subset \cup_i B_i, d(B_i)<\e\}\right)$$
where $d(B_i)$ is the diameter of $B_i$. \end{Def}

For $s=n$,  it is the usual Lebesgue measure of $\br^n$, up to a constant multiple.
It can be shown that as $s$ increases, the $s$-dimensional Hausdorff measure of $F$ will
be $\infty$ up to a certain value and then jumps down to $0$.
 The Hausdorff dimension of $F$ is 
this critical value of $s$:
$$\op{dim}_{\H}(F)=\sup\{s: \mathcal H^s(F)=\infty\}=
\inf \{s: \mathcal H^s(F)=0\}.$$

In fractal geometry, there are other notions of dimensions which
often have different values. But for the residual set of an Apollonian circle packing,
the Hausdorff dimension, the  packing dimension and the box dimension
 are all equal to each other \cite{StU}.

We observe
\begin{itemize}
\item $1\le \alpha\le 2$.
\item $\alpha$ is independent of $\mathcal P$: any two Apollonian packings are equivalent to each other by
a M\"obius transformation which maps three tangent points of one packing to three tangent points of the other packing.
\item The precise value of $\alpha$ is unknown, but approximately, $\alpha =1.30568(8)$ due to McMullen \cite{Mc}.
\end{itemize}

In particular, $\op{Res}({\P})$ is much bigger than a countable union of circles (as $\alpha>1$), but
not too big in the sense that its Lebesgue area is zero (as $\alpha <2$).

 The first counting result for Apollonian packings is due to Boyd in 1982 \cite{Boy}:
 \begin{Thm} [Boyd] $$\lim_{T\to \infty} \frac{\log N_{\mathcal P}(T)}{\log T} =\alpha .$$
\end{Thm}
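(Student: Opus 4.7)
The strategy is to interpret $N_{\mathcal P}(T)$ as the size of a natural cover of $\Res(\mathcal P)$ and apply Hausdorff-dimension arguments. The key geometric fact is that at any stage of the construction, the ambient disk is partitioned into the open disks bounded by the circles already placed, together with the remaining curvilinear triangles in the ``gaps.'' If I truncate the construction after placing every circle of curvature $\leq T$, the complement consists of $O(N_{\mathcal P}(T))$ curvilinear triangles (each new circle replaces one triangle by three, so the triangle count is linear in the circle count). Each surviving triangle still contains a sub-Apollonian packing whose top circle has curvature $>T$; by the Descartes circle theorem $d=a+b+c+2\sqrt{ab+bc+ca}$, the inscribed circle's curvature is comparable to the reciprocal of the triangle's diameter, so every such triangle has diameter $\lesssim 1/T$.

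For the lower bound $\liminf \log N_{\mathcal P}(T)/\log T \geq \alpha$, fix $s<\alpha$. Since $\mathcal H^s(\Res(\mathcal P))=\infty$, Frostman's lemma furnishes a nonzero Radon measure $\mu$ supported on $\Res(\mathcal P)$ with $\mu(B(x,r))\leq r^s$ for every $x,r$. Applying $\mu$ to the cover of $\Res(\mathcal P)$ by the depth-$T$ triangles $\{\Delta\}$ gives $0<\mu(\Res(\mathcal P))\leq \sum_\Delta \mu(\Delta)\lesssim N_{\mathcal P}(T)\cdot T^{-s}$, whence $N_{\mathcal P}(T)\gtrsim T^{s}$ and the bound follows by letting $s\nearrow\alpha$.

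The upper bound $\limsup \log N_{\mathcal P}(T)/\log T \leq \alpha$ is more delicate, because $\mathcal H^s(\Res)=0$ for $s>\alpha$ only guarantees that \emph{some} cover satisfies $\sum d(B_i)^s$ small, not the specific cover by curvilinear triangles. I would invoke the group-theoretic picture: realize $\mathcal P$ as a $\Gamma$-orbit of the initial four circles, where $\Gamma<\Mob$ is the Apollonian group, and lift $\Gamma$ to a Kleinian subgroup of $\Isom(\mathbb H^3)$. Since the curvature of $g\cdot C_0$ is comparable to $e^{d_{\mathbb H^3}(o,go)}$ for a fixed basepoint $o$ above $C_0$, one has $N_{\mathcal P}(T)\asymp \#\{g\in\Gamma : d_{\mathbb H^3}(o,go)\leq \log T\}$. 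This orbital counting function has exponential growth rate equal to the critical exponent $\delta(\Gamma)$, and Patterson--Sullivan theory for geometrically finite Kleinian groups identifies $\delta(\Gamma)$ with $\dim_H\Lambda(\Gamma)=\alpha$.

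The main obstacle is exactly this upper bound. The Apollonian group carries parabolic elements coming from the tangencies of the initial configuration, so we are in the geometrically finite but not convex cocompact setting; here the identification $\delta(\Gamma)=\dim_H\Lambda(\Gamma)$ needs Sullivan's full analysis of conformal densities at the cusps together with the shadow lemma, and the dictionary between circle curvature and hyperbolic displacement has to be pinned down by a horospherical computation in the upper half-space model.
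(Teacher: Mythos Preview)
The paper does not supply a proof of Boyd's theorem; it is quoted as a known result with a citation to \cite{Boy} and then immediately superseded by the sharper asymptotics of the subsequent theorems. So there is no argument in the paper to compare against.

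On your proposal itself: the parabolic difficulty you flag for the upper bound already bites in the lower bound, and the geometric claim you rest on there is false. You assert that each depth-$T$ curvilinear triangle has diameter $\lesssim 1/T$ because the inscribed circle's curvature is comparable to the reciprocal of the triangle's diameter. But take the triangle bounded by two of the initial circles (curvatures $a,b$ fixed) and the last circle of curvature $\le T$ in the chain converging to their tangency point $p$; that chain has curvatures growing quadratically, so the last such circle sits at distance $\asymp T^{-1/2}$ from $p$, and the triangle---which still has $p$ as a vertex---has diameter $\asymp T^{-1/2}$, not $T^{-1}$. In the model with one side a line and the other two sides circles of curvatures $b,c$, the ratio of triangle diameter to inscribed radius is $\asymp\sqrt{b/c}+\sqrt{c/b}$, which is unbounded. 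Your Frostman estimate $1\lesssim N_{\mathcal P}(T)\,T^{-s}$ therefore does not follow as written; one must first chop the cuspidal triangles and control the extra pieces.

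There is a second gap in the upper-bound comparison. The stabilizer of $C_0$ in the Apollonian group is $\langle\tau_j:j\ne i_0\rangle$, an infinite group (the paper identifies it with a conjugate of $\Gamma(2)<\PSL_2(\z)$), so infinitely many $g\in\Gamma$ give the same image circle and $N_{\mathcal P}(T)$ is \emph{not} comparable to $\#\{g\in\Gamma:d(o,go)\le\log T\}$. Curvature of $gC_0$ records the distance from $o$ to the geodesic plane $g\hat C_0$, not to the orbit point $go$. The correct setting is the orbit count in $H\backslash G$ with $H=\PSL_2(\br)$ that the paper sets up in Sections~\ref{whitehead}--\ref{bmss}; after that translation, invoking $\delta(\Gamma)=\op{dim}_\H\Lambda(\Gamma)$ via Patterson--Sullivan is indeed the right endpoint.
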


Boyd asked in \cite{Boy} whether $N_{\mathcal P}(T)\sim c \cdot T^{\alpha}$ as $T\to \infty$,
and wrote that his numerical experiments suggest this may be false and
 perhaps
$$N_{\mathcal P}(T)\sim c \cdot T^\alpha (\log T)^\beta$$ might be more appropriate.

However it turns out that there is no extra logarithmic term:
\begin{Thm} [Kontorovich-O.  \cite{KO}]\label{kot}
For a bounded Apollonian packing $\mathcal P$, there exists a constant $c_{{\P}}>0$ such that
$$N_{\mathcal P}(T)\sim c_{\mathcal P} \cdot T^\alpha \quad\text{ as $T\to \infty$}.  $$
 \end{Thm}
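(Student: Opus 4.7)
The plan is to recast the curvature count as a $\G$-orbit count for the Apollonian group and then apply equidistribution of translated horospherical orbits on the frame bundle of $\G\ba\bH^3$. The \emph{Apollonian group} $\G$ is the subgroup of $\Mob$ generated by the four inversions through the dual circles of an initial configuration of four mutually tangent circles; it is discrete and geometrically finite in $\Isom(\bH^3)=\PSL_2(\c)$, and its limit set in $\hat\c$ coincides with $\Res(\P)$, so its critical exponent equals $\alpha$. Since $\P$ decomposes into finitely many $\G$-orbits of ``seed'' circles, it suffices to estimate $\#\{[g]\in \op{Stab}_\G(C_0)\ba\G : \op{curv}(gC_0)\le T\}$ for each seed $C_0$.

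Next I would identify circles in $\hat\c$ with vectors on a one-sheeted hyperboloid in $\R^{3,1}$, on which $\PSL_2(\c)$ acts through its spin double cover. Under this identification $\op{curv}$ becomes a fixed linear functional $\la\,\cdot\,,e\ra$ for a null vector $e$, so that $N_\P(T)$, up to finitely many seeds and a bounded error, equals the number of orbit points of $\G\cdot v_{C_0}$ in the region $\{v:0<\la v,e\ra\le T\}$. Writing $H\le \PSL_2(\c)$ for the stabilizer of $v_{C_0}$ (a copy of $\PSL_2(\R)$ preserving $C_0$ as a totally geodesic plane in $\bH^3$), a standard unfolding converts the count into a matrix-coefficient type integral on $\G\ba\PSL_2(\c)$ against an expanding family of $H$-invariant test functions.

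The analytic heart is the following equidistribution statement: for $\G$ geometrically finite with $\alpha$ exceeding half the maximal cusp rank (here $\alpha>1$ while the cusps have rank $2$), the translates of the closed $H$-orbit by the geodesic flow, renormalized by $e^{(2-\alpha)t}$, equidistribute to the Burger--Roblin measure $\mu^{\BR}$ on $\G\ba\PSL_2(\c)$. One proves this by decomposing $H$-translates into stable/unstable transversals of the frame flow and applying mixing of the frame flow with respect to the Bowen--Margulis--Sullivan measure $\mu^{\BMS}$ (due to Winter, building on Babillot, Roblin, and Flaminio--Spatzier) together with a wavefront/thickening argument. Unfolding the resulting $\mu^{\BR}$-integral against the Patterson--Sullivan measure on a boundary transversal yields the constant $c_\P>0$ explicitly, as a product of a Patterson--Sullivan mass and a Lebesgue integral over an $H$-fundamental domain.

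The main obstacle is sharpness without logarithmic corrections -- exactly the possibility Boyd's numerics suggested. Since $\G$ has cusps, horospherical orbits can take long excursions into them, and the $(\log T)^\beta$ factors Boyd anticipated would indeed appear if the cusp contributions were of the same order as the main term. Ruling them out requires (i) finiteness of $\mu^{\BMS}$, which is ensured by the strict inequality $\alpha>1$ together with the rank-$2$ structure of the cusps and which in turn permits the clean renormalization rate $e^{(2-\alpha)t}$; and (ii) $\muPS$-negligibility of the boundary of the counting region, a porosity/regularity statement about $\Res(\P)$. Once both are in hand the equidistribution step produces the genuine asymptotic $N_\P(T)\sim c_\P\,T^\alpha$ with no logarithmic factors.
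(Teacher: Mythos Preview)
Your overall strategy matches the paper's outline: recast the circle count as an orbit count on $\PSL_2(\br)\backslash\PSL_2(\c)$ for the geometric Apollonian group, then deduce the asymptotic from equidistribution of geodesic translates of the closed $\PSL_2(\br)$-orbit toward the Burger--Roblin measure, driven by mixing of the frame flow for the Bowen--Margulis--Sullivan measure. That architecture is correct and is exactly what Sections~\ref{whitehead}--\ref{bmss} describe.

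However, the details you supply at the step you flag as the ``main obstacle'' are wrong. First, the cusps of the Apollonian group have rank at most $1$, not rank $2$: the parabolic fixed points are the tangency points of circles in $\P$, each with a rank-one parabolic stabilizer, and the paper says so explicitly when explaining why the Hausdorff measure appears in Theorem~\ref{dist}. Second, finiteness of $m^{\BMS}$ does not depend on $\alpha>1$ or on any cusp-rank hypothesis; it holds for every geometrically finite group by Sullivan. The hypothesis that actually drives Theorem~\ref{ifc}, and hence the clean $T^\alpha$ with no logarithm, is finiteness of the \emph{skinning measure} $\mu_H^{\PS}$ on $(\Gamma\cap H)\backslash H$, which you never mention; this is the correct infinite-covolume replacement for ``$(\Gamma\cap H)\backslash H$ has finite $H$-invariant volume,'' and verifying it is where the cusp geometry of $\mathcal A$ is genuinely used. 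The inequality $\alpha>1$ plays no role in the non-effective asymptotic of Theorem~\ref{kot}; it enters only for the effective error terms. So your roadmap is right, but your account of which finiteness rules out Boyd's logarithmic correction needs to be rewritten around the skinning measure and the correct rank-one cusp structure.
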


\begin{Thm}[Lee-O. \cite{LOA}] \label{LOa}
There exists $\eta>0$ such that
for any bounded Apollonian packing $\mathcal P$, 
$$N_{\mathcal P}(T)= c_{\mathcal P} \cdot T^\alpha +O(T^{\alpha-\eta}) .$$ \end{Thm}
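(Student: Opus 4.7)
\bigskip

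\noindent\textbf{Proof proposal.} The plan is to upgrade the main-term asymptotic of Theorem \ref{kot} to a power-saving asymptotic by importing an effective version of the equidistribution statement that underlies it. As in the proof of Theorem \ref{kot}, I would first realize the Apollonian packing as a union of $\Gamma$-orbits of finitely many circles, where $\Gamma < \PSL_2(\c)$ is the Apollonian group (a geometrically finite Kleinian group whose limit set is $\Res(\P)$ and whose critical exponent is precisely $\alpha$). Identifying the space of circles in $\hat\c$ with a homogeneous space $H\backslash G$ (with $G=\PSL_2(\c)$ and $H$ the stabilizer of a fixed circle), the curvature condition $\op{curv}(C)\le T$ becomes a geometric size condition on a $\Gamma$-orbit in $H\backslash G$, so
$$N_{\P}(T)=\sum_{C_0}\#\{\gamma\in \Gamma_{C_0}\backslash \Gamma:\op{curv}(\gamma C_0)\le T\}+O(1),$$
the sum running over a finite set of representative circles.

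The next step is to express this orbital count as an integral of a characteristic function of a growing region $B_T\subset H\backslash G$ against the counting measure of the $\Gamma$-orbit, and to unfold this into an integral on $\Gamma\cap H \backslash G$ that can be controlled by the equidistribution of translates of a closed horospherical (or, in this circle setting, $H$-) orbit in the frame bundle $\Gamma\backslash G$. The main term $c_{\P}T^\alpha$ arises from pairing the Patterson--Sullivan density on the boundary with the Burger--Roblin measure along the unstable horospherical direction; the PS dimension $\alpha$ produces the exponent. So it suffices to prove \emph{effective} equidistribution of translated $H$-orbits against the Burger--Roblin measure, with a polynomial rate. The standard recipe for this is the thickening trick of Eskin--McMullen combined with effective mixing of the geodesic flow $\{g^t\}$ on $\T^1(\Gamma\backslash \bH^3)$ with respect to the BMS measure.

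Thus the core analytic input I would invoke is \emph{exponential mixing} of the geodesic flow on $\T^1(\Gamma\backslash \bH^3)$ for the BMS measure: there exist $\eta_0>0$ and $\ell\ge 1$ such that for all compactly supported $C^\ell$ test functions $\phi,\psi$,
$$\int \phi(g^t x)\,\psi(x)\,d\BMS(x)=\frac{1}{|\BMS|}\int\phi\,d\BMS\int\psi\,d\BMS+O\!\left(e^{-\eta_0 t}\,\|\phi\|_{C^\ell}\|\psi\|_{C^\ell}\right).$$
For the Apollonian group this follows from the work on exponential mixing for geometrically finite hyperbolic manifolds (Stoyanov, Mohammadi--Oh, Edwards--Oh and related authors), once one checks that $\Gamma$ satisfies the required spectral gap hypothesis, which holds because $\alpha>1$ and $\Gamma$ is geometrically finite with cusps of rank one. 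Feeding this rate into the thickening/unfolding computation, and carefully tracking the dependence of test functions on a scale $e^{-\kappa t}$, yields the desired error term $O(T^{\alpha-\eta})$ for an explicit $\eta>0$ depending on $\eta_0$, $\alpha$ and $\ell$.

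The main obstacle is the presence of cusps. The Apollonian group has parabolic fixed points at the tangency points of the packing, so the frame bundle $\Gamma\backslash G$ is non-compact and the BMS measure is only finite (not uniformly compactly supported), and horospherical translates can escape into cusp neighborhoods. This makes the effective mixing, the Sobolev norm bookkeeping, and the truncation of orbits near the cusps delicate: one must control both the contribution of cusp excursions (using the thickness of the PS measure near parabolic fixed points, in the spirit of Stratmann--Velani) and avoid losses of powers of $T$ when choosing the smoothing scale. Handling this cusp analysis quantitatively, and ensuring the polynomial loss in the Sobolev norms of the bump functions is absorbed by the exponential gain in $t=\frac{1}{\alpha}\log T$, is the step where all the care is required; everything else is then a fairly standard effectivization of the argument of Theorem \ref{kot}.
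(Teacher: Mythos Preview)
The survey does not actually prove this theorem; it is quoted from \cite{LOA}, and the surrounding discussion (Sections~\ref{whitehead}--\ref{bmss}) only sketches the general framework. Comparing against that framework and the method of \cite{LOA}: your structural plan (effectivize the equidistribution input behind Theorem~\ref{kot}) is sound, but your route differs from the one actually taken in two respects. First, \cite{KO} and \cite{LOA} do not work with translates of $H=\PSL_2(\br)$-orbits; they send a tangency point of $\P$ to $\infty$, so that two circles become parallel lines and the relevant orbit in $\G\ba G$ is a \emph{closed horosphere}, whose expanding translates are what must be equidistributed effectively (your parenthetical ``horospherical (or, in this circle setting, $H$-)'' conflates two genuinely different orbit types). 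Second, the analytic input in \cite{LOA} is not exponential mixing of the BMS measure but the $L^2$ spectral gap for the Laplacian on $\G\ba\bH^3$ coming from Lax--Phillips \cite{LP} and Sullivan \cite{Sullivan1984} (valid precisely because $\alpha>1$), which yields effective decay of matrix coefficients of $L^2(\G\ba G)$ and hence an effective rate for horosphere equidistribution against $m^{\BR}$; this is the mechanism the survey points to near Theorem~\ref{lastt}. Your proposed route---exponential BMS mixing plus $\PSL_2(\br)$-translates---is closer to the later effectivization in \cite{MO2} and can be made to work, but it is anachronistic relative to \cite{LOA} and technically heavier in the cusps, whereas the horosphere/spectral-gap approach sidesteps much of the Sobolev-norm bookkeeping you flag as the main obstacle.
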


Vinogradov \cite{V} has also independently obtained Theorem \ref{LOa} with a weaker error term.

For  an unbounded Apollonian packing ${\P}$, we have $N_{{\P}}(T)=\infty$ in general; however we can
modify our counting question so that we count only those circles contained in a fixed curvilinear
triangle $\mathcal R$ whose sides are given by three mutually tangent circles.

\begin{figure}
 \begin{center}
    \includegraphics[width=1.5in]{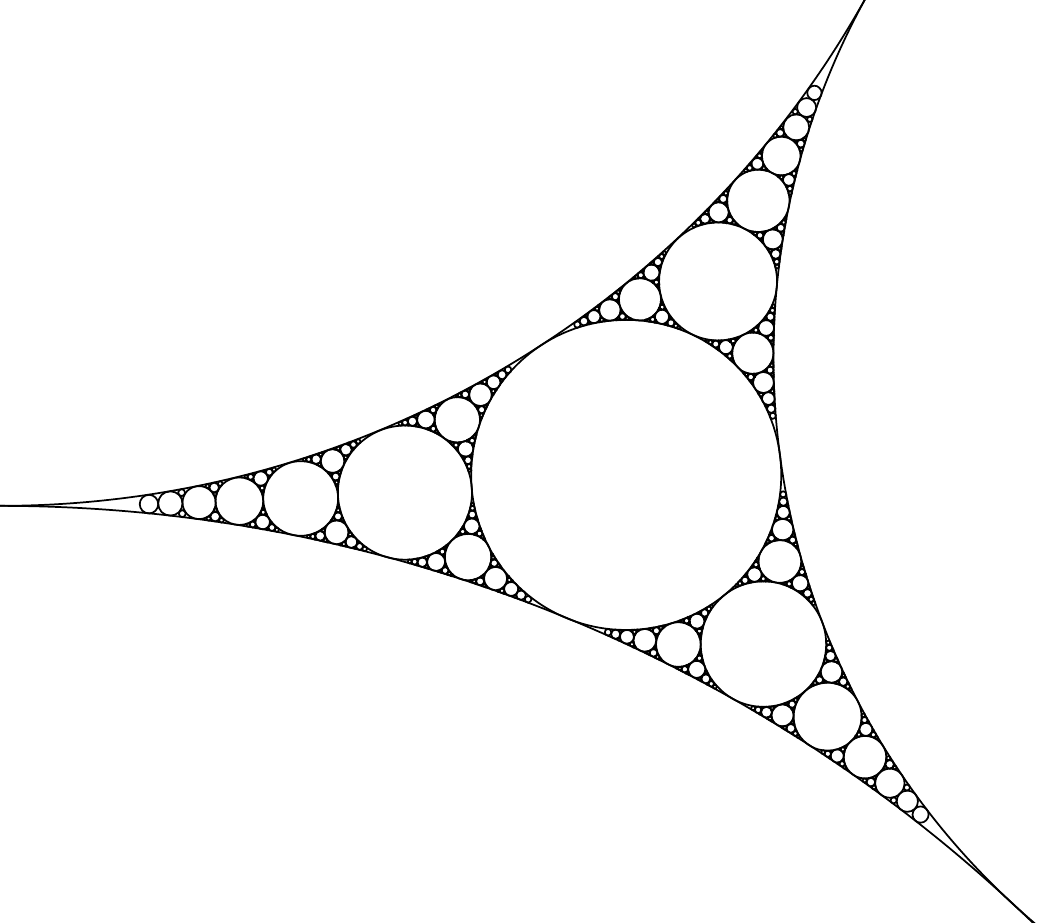}
 \end{center}
\end{figure}

Setting
$$N_{\mathcal R}(T):=\#\{C\in \mathcal R:\op{curv}(C)\le  T\} <\infty ,$$
we have shown:
\begin{Thm}[O.-Shah  \cite{OS1}]\label{os1}
For a curvilinear triangle $\mathcal R$ of
any Apollonian packing $\mathcal P$, there exists a constant $c_{\mathcal R}>0$ such that
$$N_{\mathcal R}(T)\sim c_{\mathcal R}  \cdot T^\alpha \quad\text{ as $T\to \infty$}.$$
\end{Thm}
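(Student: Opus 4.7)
The plan is to reinterpret $N_{\mathcal{R}}(T)$ as an orbital counting problem for the Apollonian group $\Gamma$, regarded as a geometrically finite Kleinian subgroup of $G = \PSL_2(\bc) \cong \Isom^+(\mathbb{H}^3)$. A key structural fact is that the limit set $\LG$ coincides with $\op{Res}(\P)$, so the critical exponent of $\Gamma$ equals $\alpha$. Every circle $C \in \P$ is of the form $\gamma C_0$ for some $\gamma \in \Gamma$ and a fixed reference circle $C_0 \in \P$, and the map $\Gamma_{C_0}\backslash \Gamma \to \P$, $\gamma \mapsto \gamma C_0$, is a bijection onto $\P$. A short computation with the Busemann cocycle attached to the hyperbolic plane bounded by $C_0$ shows that $\op{curv}(\gamma C_0) \le T$ is equivalent to a geometric inequality of the form $\beta(\gamma) \le \log T + O(1)$, while the requirement $\gamma C_0 \subset \mathcal{R}$ selects a domain $\Omega_{\mathcal{R}} \subset \hat \bc$ whose intersection with $\LG$ has positive Patterson--Sullivan measure.

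Next, I would follow a thickening/unfolding strategy. Let $H \cong \PSL_2(\br) \subset G$ be the stabilizer of $C_0$, and let $\{a_t\}$ be the one-parameter subgroup whose action is the geodesic flow normal to the hyperbolic plane bounded by $C_0$. Using the decomposition $G = H A N$ (with $A=\{a_t\}$ and $N$ the horospherical subgroup contracted by $a_{-t}$), the count $N_{\mathcal{R}}(T)$ becomes, up to boundary corrections, a weighted sum over $\Gamma$-cosets in a box of hyperbolic width $\log T$. This sum unfolds into an integral over the closed subset $\Gamma \backslash \Gamma H \subset \Gamma \backslash G$, pushed forward by $a_{\log T}$ and tested against a bump function near the identity coset. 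The theorem then reduces to the equidistribution, as $t \to \infty$, of $e^{-\alpha t}(a_t)_*\mu_H$ on $\Gamma \backslash G$ toward a multiple of the Burger--Roblin measure $\mu^{\BR}$, where $\mu_H$ is the natural measure on $\Gamma \backslash \Gamma H$ built from the Patterson--Sullivan density. This equidistribution follows from Roblin's mixing theorem for the BMS flow on $\Gamma \backslash G$ (applicable since $\Gamma$ is geometrically finite with $|\mu^{\BMS}|<\infty$) combined with an unfolding of the $\mu^{\BR}$ integral. The constant $c_{\mathcal{R}}$ is then read off as the PS-mass of $\Omega_{\mathcal{R}} \cap \LG$ times an explicit Haar-type factor, divided by $|\mu^{\BMS}|$.

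The principal obstacle is that $\Gamma$ contains rank-one parabolic subgroups at each tangency point in $\P$, so $\Gamma \backslash G$ is non-compact and the translated $H$-orbit can in principle push mass into the cusps. Raw mixing of $a_t$ does not by itself deliver the pointwise equidistribution required for the counting; one must prove a uniform non-divergence estimate controlling the mass of the translated orbit accumulating in cuspidal neighbourhoods. In the geometrically finite setting this is handled by exploiting that every cusp of $\Gamma$ has rank strictly less than the critical exponent $\alpha$, so the PS-measure near each parabolic fixed point decays at a rate that dominates the expansion along the unstable direction. Establishing this quantitative non-escape of mass, and reconciling it with the boundary effects produced by restricting to the curvilinear region $\mathcal{R}$, is the technical heart of the argument and is where I expect the main effort to lie in turning Roblin's mixing theorem into the sharp asymptotic $N_{\mathcal{R}}(T) \sim c_{\mathcal{R}} T^\alpha$.
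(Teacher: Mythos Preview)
Your proposal is essentially correct and follows the same route the paper outlines in Sections~2 and~3: identify circles with points in $H\backslash G$ for $H=\PSL_2(\br)$, $G=\PSL_2(\c)$, reduce $N_{\mathcal R}(T)$ to counting $\mathcal A$-orbit points in a growing family of compact sets, and deduce the asymptotic from the equidistribution of the translates $\Gamma\backslash\Gamma H a_t$ toward the Burger--Roblin measure, which in turn rests on mixing of the BMS measure for the geometrically finite quotient. Your identification of the cusp/non-escape issue and of the formula for $c_{\mathcal R}$ in terms of the skinning (PS) mass divided by $|m^{\BMS}|$ also matches the paper's Theorem~\ref{ifc}.

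Two small corrections. First, $\P$ is not a single $\mathcal A$-orbit of circles but a union of finitely many (four) orbits, namely $\P=\bigcup_{i=1}^4 \mathcal A(C_i)$ for the initial mutually tangent quadruple; this changes nothing structurally but your bijection $\Gamma_{C_0}\backslash\Gamma\to\P$ as stated is false. Second, be careful with the normalization: the paper records the equidistribution of $\Gamma\backslash\Gamma H a_t$ (with the \emph{Haar} measure on $H$) toward $m^{\BR}$ after scaling by $e^{(2-\delta)t}$, whereas you scale the PS-weighted skinning measure by $e^{-\alpha t}$; these are compatible statements but not the same one, and mixing them risks an incorrect constant.
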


Going even further, we may
ask if we can describe the  asymptotic distribution of circles in $\mathcal P$ of curvature at most $T$ as $T\to \infty$.
To formulate this question precisely, for any bounded region $E\subset \c$,
we set $$N_\P(T, E) :=\#\{C\in {\P}: C\cap E\ne\emptyset,\,\text{curv}(C) \le T\}.$$

Then  the question on the asymptotic distribution of circles in $\P$
 amounts to searching for a locally finite Borel measure $\omega_{\P}$ on the plane ${\mathbb C}$ satisfying that $$ \lim_{T\to \infty} \frac{N_T({\P}, E)}{T^\alpha} = \omega_{\P}(E) $$
 for any bounded Borel subset $E\subset {\mathbb C}$ with negligible boundary.

Noting  that all the circles in $\P$ lie on
the residual set of $\P$,  any Borel measure describing the asymptotic
distribution of 
circles of $\P$ must be supported on $\op{Res}(\P)$. 

\begin{Thm} [O.-Shah \cite{OS1}]  \label{dist}
For any bounded Borel $E\subset {\mathbb C}$ with smooth boundary,
$$ {N_\P(T, E)}\sim  c_A\cdot  {\mathcal H}^\alpha_\P(E ) \cdot {T^\alpha}\quad\text{ as $T\to \infty$}$$
 where ${\mathcal H}^\alpha_\P $ denotes  the $\alpha$-dimensional Hausdorff measure of the set $\Res(\P)$ and 
$0<c_A<\infty$ is  a constant independent of $\P$. 
  \end{Thm}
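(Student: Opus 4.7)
The plan is to pass from the Euclidean counting of circles to counting in a thin-group homogeneous space, and then invoke equidistribution of translated horospherical pieces with respect to the Burger--Roblin measure.

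First I would realize the Apollonian packing as a $\Gamma$-orbit. Identify $\PSL_2(\c)\cong \Isom^+(\bH^3)$ so that circles in $\hat\c$ correspond to totally geodesic hyperplanes in $\bH^3$. The Apollonian group $\Gamma$ (generated by inversions in the four ``dual'' circles orthogonal to any quadruple of mutually tangent circles) is a geometrically finite, Zariski dense subgroup of $\Isom(\bH^3)$ whose limit set coincides with $\Res(\P)$. By Sullivan's theorem, the critical exponent $\delta(\Gamma)$ equals the Hausdorff dimension $\alpha$, and the Patterson--Sullivan density $\{\nu_x\}$, when restricted to $\Lambda(\Gamma)=\Res(\P)$, is a constant multiple of $\mathcal H^\alpha_\P$. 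Fix one circle $C_0\in\P$ with stabilizer $H=\Stab_G(C_0)\simeq \PSL_2(\br)\cdot(\text{reflection})$; then $\P = \Gamma\cdot C_0$, and the curvature of $\gamma C_0$ can be expressed in terms of a Busemann-type height function on $G/H$.

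Next I would convert the counting $N_\P(T,E)$ into a counting of lattice points in a growing family of Borel sets in $H\ba G$ (or equivalently, orbit points of $\Gamma$ acting on the space of circles $\cC$). Choosing coordinates so that the ``curvature at most $T$'' condition corresponds to $\gamma$ pushing $C_0$ into a region whose volume, weighted by the conformal density, scales like $T^\alpha$, one rewrites
\[
N_\P(T,E)=\#\{\gamma\in\Gamma/\Gamma_{C_0}: \op{curv}(\gamma C_0)\le T,\ \gamma C_0\cap E\ne\emptyset\}.
\]
Sliced by the point of tangency or by footpoints on $E$, this has the shape of counting points of $\Gamma$ in expanding translates of a compact piece of a horospherical orbit $xU$ inside $\Gamma\ba G$, intersected with a transverse ``window'' prescribed by $E$.

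The core analytic step is the equidistribution of such expanding translates: for a bounded continuous test function $\psi$ on $\Gamma\ba G$ of compact support, one needs
\[
e^{-(2-\alpha)t}\int_U \psi\bigl(xu\, a_t\bigr)\,d u \longrightarrow m^{\BR}(\psi)\cdot \mu^{\PS}_x(U_0)
\]
as $t\to\infty$, where $a_t$ is the geodesic flow, $m^{\BR}$ is the Burger--Roblin measure, and $\mu^{\PS}_x$ is the Patterson--Sullivan measure on the strong-unstable leaf through $x$. This is proved via mixing of $a_t$ on $\Gamma\ba G$ with respect to the Bowen--Margulis--Sullivan measure; since $\Gamma$ is geometrically finite with $|m^{\BMS}|<\infty$, Roblin's mixing theorem applies. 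Integrating over the parameter sliced by the shell $\{T/2<\op{curv}\le T\}$ (and summing dyadically) then yields the asymptotic
\[
N_\P(T,E)\sim c_A\cdot \mathcal H^\alpha_\P(E)\cdot T^\alpha,
\]
the Hausdorff factor appearing because $\mu^{\PS}$ restricted to $\Lambda(\Gamma)$ is proportional to $\mathcal H^\alpha_\P$, and the constant $c_A$ being a quotient of $m^{\BR}$-mass by $|m^{\BMS}|$, which depends only on $\Gamma$ (hence is independent of $\P$).

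The main obstacle is the non-compactness coming from the rank-one cusps of $\Gamma\ba \bH^3$ (the parabolic fixed points of the packing correspond to tangency points). The Burger--Roblin measure is infinite, and expanding horospherical pieces may escape into cusps; one must therefore establish a uniform non-escape of mass estimate for the translated horospherical measures, together with control of the boundary of the window determined by $E$. The smoothness assumption on $\partial E$ reduces the latter to showing $\mathcal H^\alpha_\P(\partial E\cap \Lambda(\Gamma))=0$, which follows because $\mu^{\PS}$ has no atoms and gives no mass to smooth curves transverse to $\Lambda(\Gamma)$. Handling the cusp excursions is the heart of the argument; once that is in place, the rest is a standard thickening/sandwiching from smooth test functions to indicator functions of $E$.
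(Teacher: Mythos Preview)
Your architecture matches the paper's approach: realize $\P$ via the geometrically finite Apollonian group $\Gamma$, translate circle-counting into orbit-counting in $H\backslash G$ with $G=\PSL_2(\c)$ and $H=\PSL_2(\br)$ (the stabilizer of a circle), and use mixing of the Bowen--Margulis--Sullivan measure to obtain equidistribution toward the Burger--Roblin measure; the Hausdorff measure emerges because the Patterson--Sullivan density is proportional to $\mathcal H^\alpha_\P$ on $\Lambda(\Gamma)$ when all cusps have rank at most one. Your identification of cusp non-escape as the main technical obstacle is also correct.

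There is, however, a genuine confusion at your ``core analytic step.'' Having correctly set $H=\op{Stab}_G(C_0)\simeq\PSL_2(\br)$, you then write the equidistribution for expanding \emph{horospherical} pieces $xUa_t$ along strong-unstable leaves. But circles in $\hat\c$ lift to totally geodesic hemispheres in $\bH^3$, not horospheres; once $H$ is fixed as the circle stabilizer, the relevant dynamical input is equidistribution of the orthogonal translates $\Gamma\backslash\Gamma H a_t$ of a totally geodesic surface (scaled by $e^{(2-\alpha)t}$), and the transverse factor on the right-hand side is the \emph{skinning measure} $\mu_H^{\PS}((\Gamma\cap H)\backslash H)$, not a PS measure on an unstable leaf. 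The horospherical statement you wrote is a different (though related) theorem and does not by itself organize the count over a general window $E$; your phrase ``sliced by the point of tangency or by footpoints on $E$'' does not supply the missing reduction. Separately, your argument that $c_A$ is independent of $\P$ because it ``depends only on $\Gamma$'' is circular, since $\Gamma=\mathcal A_\P$ varies with $\P$; the actual reason is that any two Apollonian groups are conjugate in $\PSL_2(\c)^\pm$ and the relevant ratio of measures is conjugation-invariant.
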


In general, $\text{dim}_\H(F)=s$ does not mean that the $s$-dimensional Hausdorff measure $\H^{s}(F)$ is non-trivial
(it could be $0$ or $\infty$). But on the residual set $\Res(\P)$ of an Apollonian packing,  $\H^{\alpha}_\P$ is known to be locally finite and its
support is precisely $\Res(\P)$ by Sullivan \cite{Sullivan1984}; hence ${\mathcal H}^\alpha_{\P} (E)<\infty$ for $E$ bounded and
  $0<{\mathcal H}^\alpha_{\P} (E)$ if $E^\circ\cap \Res(\P)\ne \emptyset$.

Though the Hausdorff dimension and
the packing dimension are equal to each other for $\Res (\P)$, the packing measure is locally infinite (\cite{Sullivan1984}, \cite{MU})
which indicates that
 the metric properties of $\Res (\P)$ are subtle.

Theorem \ref{dist} says that
 circles in an Apollonian packing $\P$ are  uniformly distributed with respect to
the $\alpha$-dimensional Hausdorff measure on $\Res(\P)$: for any bounded Borel subsets $E_1, E_2 \subset \c$ 
with smooth boundaries and with $E_2^\circ \cap \Res(\P)\ne \emptyset$,

$$\lim_{T\to \infty} \frac{N_\P(T, E_1)}{N_\P(T, E_2)} = \frac{\H^\alpha_\P (E_1)}{\H^\alpha_{\P}(E_2)} .$$

\noindent{\bf {Apollonian constant}.}
Observe that the constant $c_A$ in Theorem \ref{dist} is given by
 $$c_A=\lim_{T\to\infty}\frac{ N_\P(T, E)}{T^\alpha \cdot \H^\alpha_\P(E)} $$
for {\it any} Apollonian circle packing $\P$ and any $E$ with $E^\circ \cap \Res(\P)\ne \emptyset$.
In particular,  $$c_A=\lim_{T\to\infty}\frac{ N_\P(T)}{T^\alpha \cdot \H^\alpha_\P(\Res(\P))} $$
for any bounded Apollonian circle packing $\P$.
\begin{Def}\rm  We propose to call $0<c_A<\infty $ the {\it Apollonian constant}.
\end{Def}

\begin{Q}\rm What is $c_A$? (even approximation?)
\end{Q}

Whereas all other terms in the asymptotic formula of Theorem \ref{os1}
can be described using the metric notions of Euclidean plane,
our exact formula of $c_A$ involves certain singular measures
of an infinite-volume hyperbolic $3$ manifold, indicating the intricacy of the precise counting problem.

\bigskip
\begin{figure}
 \begin{center}
    \includegraphics[width=2in]{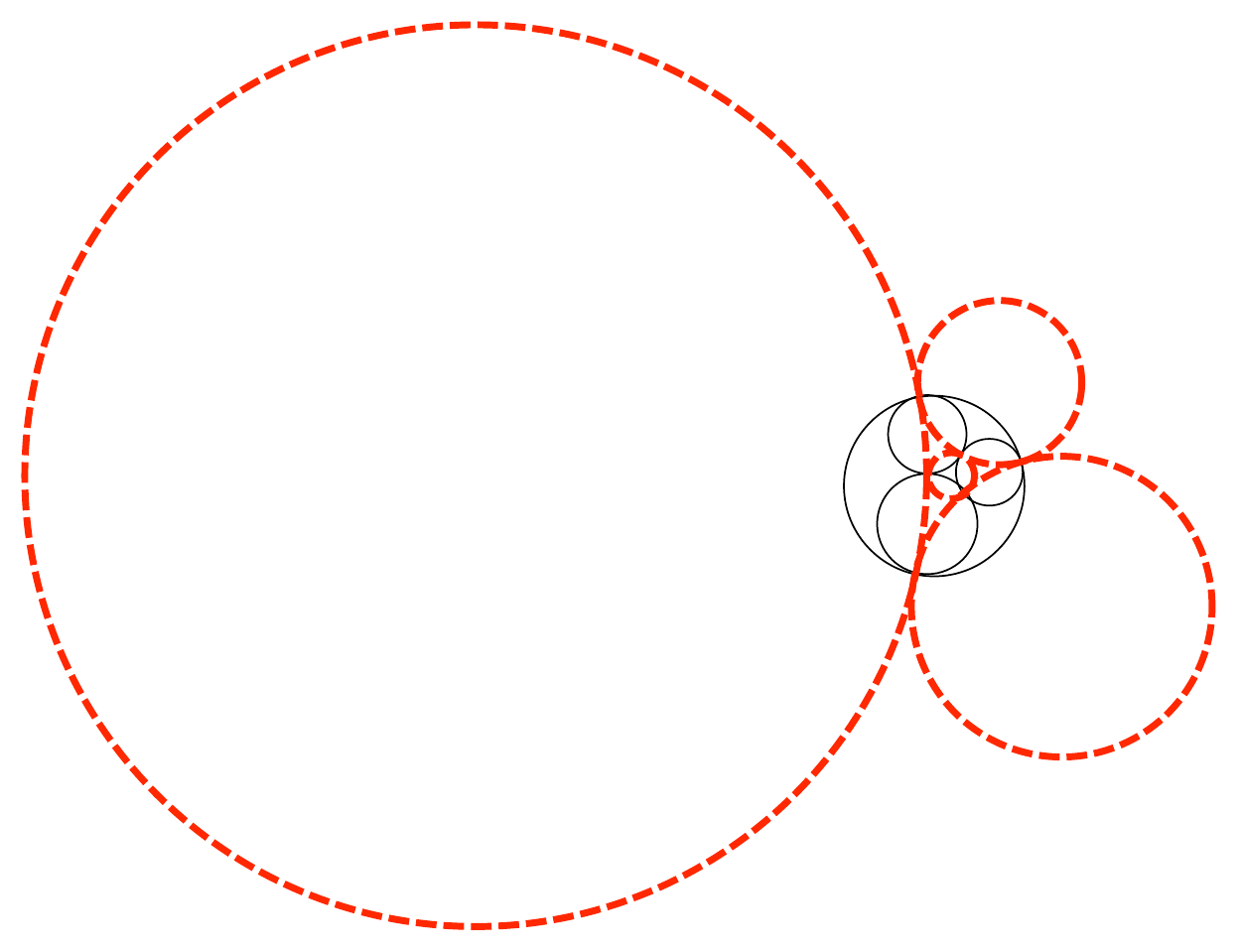}
 \end{center}\caption{Dual circles} \label{dcDual}
\end{figure}
\section{Hidden symmetries and Orbital counting problem}\label{whitehead}
\noindent{\bf Hidden symmetries.}
The key to our approach of counting circles in an Apollonian packing lies in
 the fact that \begin{center}{\it An Apollonian circle packing has lots of
hidden symmetries}.\end{center}
Explaining these hidden symmetries will lead us to explain the relevance of the packing with
 a Kleinian group, called the (geometric) Apollonian group.

Fix
 4 mutually tangent circles $C_1, C_2, C_3, C_4$ in  $\P$ 
and consider their dual circles $\hat C_1, \cdots, \hat C_4$, that is,
 $\hat C_i$ is the unique circle passing through the three tangent points among $C_j$'s for $j\ne i$.
 In Figure \ref{dcDual}, 
 the solid circles represent $C_i$'s and the dotted circles are their dual circles.
Observe that inverting with respect to a dual circle preserves the three circles
that it meets perpendicularly and interchanges the two circles
which are tangent to those three circles.

\begin{Def} \rm The inversion 
 with respect to a circle of radius $r$ centered at $a$ maps $x$ to $a+\frac{r^2}{|x-a|^2} (x-a)$. 
 The group $\text{M\"ob}(\hat{\mathbb C})$ of M\"obis transformations in $\hat \c$  is generated by inversions with respect to all circles in $\hat \c$.
 \end{Def}

The geometric Apollonian group $\mathcal A:=\mathcal A_{\P}$ associated to $\P$ is
 generated by the four inversions with respect to the dual circles:
$$\mathcal A =\langle \tau_1, \tau_2, \tau_3, \tau_4 \rangle <\text{M\"ob}(\hat{\mathbb C})$$
where $\tau_i$ denotes the inversion with respect to $\hat C_i$.
Note that $\PSL_2(\c)$ is a subgroup of $\text{M\"ob}(\hat{\mathbb C})$ of index two; we will
write $\text{M\"ob}(\hat{\mathbb C})=\PSL_2(\mathbb C)^{\pm}$.
The Apollonian group $\A$ is a  Kleinian group (= a discrete subgroup of $\PSL_2(\mathbb C)^{\pm}$)
and satisfies 

\begin{itemize}
\item  $\P=\cup_{i=1}^4 \mathcal A (C_i)$, that is,
inverting the initial four circles in $\P$ with respect to their dual circles generates
the whole packing $\P$;
\item  $\op{Res}(\P)=\Lambda(\A)$ where $\Lambda(\A)$ denotes the limit set of $\A$, which is
the set of all accumulation points of an orbit $\mathcal A(z)$
for $z\in \hat{\mathbb C}$.
\end{itemize}

\begin{figure}
 \begin{center}
    \includegraphics[width=2in]{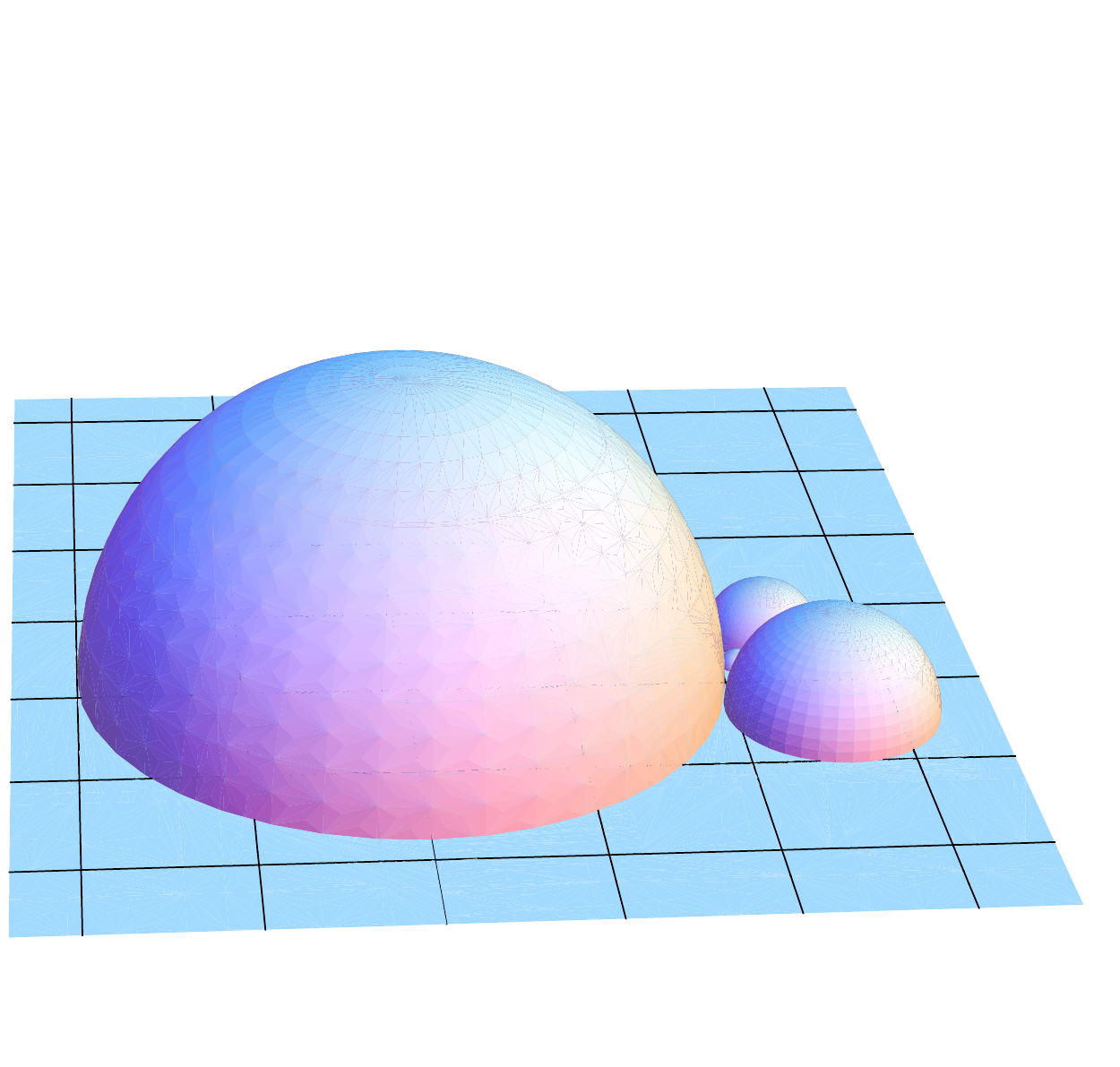}
 \end{center}
\end{figure}

In order to explain how the hyperbolic geometry comes into the picture, it is most convenient to use
the upper-half space model for hyperbolic $3$ space $\bH^3$:
$\bH^3=\{(x_1,x_2,y): y>0\}$. The hyperbolic metric is given by $ ds=\frac{\sqrt{dx_1^2+dx_2^2+dy^2}}{y}$
and the geometric boundary $\partial_\infty(\bH^3)$ is naturally identified with $\hat {\mathbb C}$.
Totally geodesic subspaces in $\bH^3$ are vertical lines, vertical circles, vertical planes, and vertical hemispheres.


The Poincare extension theorem gives an identification $\text{M\"ob}(\hat{\mathbb C})$ with the isometry group $\op{Isom}(\bH^3)$.
Since $\text{M\"ob}(\hat{\mathbb C})$ is generated by inversions with respect to circles in $\hat{\mathbb C}$, the Poincare extension theorem is determined by
the correspondence which assigns to an inversion with respect to a circle $C$ in $\hat{\mathbb C}$ the inversion with respect to the vertical
hemisphere in $\bH^3$ above $C$. An inversion with respect to a vertical hemisphere preserves the upper half space, 
as well as the hyperbolic metric, and hence
gives rise to an isometry of $\bH^3$.


The Apollonian group $\mathcal A=\mathcal A_\P$, now considered as a discrete subgroup of
$\op{Isom}(\bH^3)$, has a fundamental domain in $\bH^3$, given by
 the exterior of the hemispheres above the dual circles to $\P$.
In particular,
$\mathcal A\backslash \bH^3$ is an infinite volume hyperbolic $3$-manifold
and has a fundamental domain with finitely many sides; such a manifold is called a geometrically finite manifold. 

\begin{figure}
 \begin{center}
    \includegraphics[height=5cm]{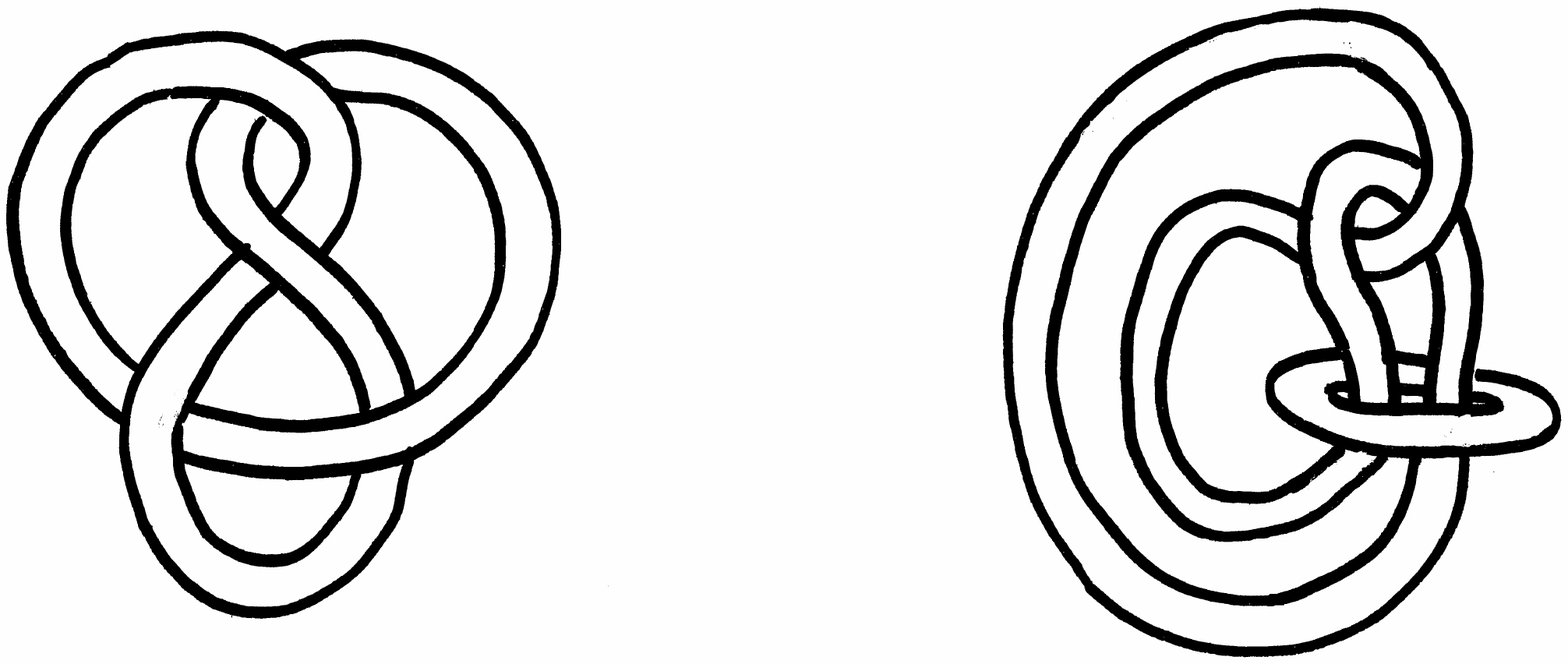}
 \end{center}
 \caption{Whitehead Link} \label{white}\end{figure}
 
 \medskip
 
\noindent{\bf Connection with the Whitehead link.}  
 The Apollonian manifold $\A\ba \bH^3$ can also be constructed from the Whitehead link complement. To explain the connection,
 consider the group, say, $\A^*$ generated by $8$ inversions with respect to
 four mutually tangent circles as well as their four dual circles. 
 Then the group $\A^*$ has a regular ideal hyperbolic octahedron as a fundamental domain in $\mathbb H^3$, and
 is commensurable to the Picard group $\PSL_2 (\z[i])$, up to a conjugation, which is a lattice in $\PSL_2(\c)$.
  The quotient orbifold  $\A^*\ba \bH^3$ is commensurable to the Whitehead link complement $S^3-W$ (see Figure \ref{white}).
  In this finite volume $3$-manifold $S^3-W$, we have a triply punctured sphere (corresponding
  a disk in $S^3$ spanning one component of $W$ and pierced twice by the other component),
  which is totally geodesic and whose fundamental group
  is conjugate to the congruence subgroup $\G(2)$ of $\PSL_2(\z)$ of level $2$.
  If we cut the manifold $S^3-W$ open along this totally geodesic surface $\G(2)\ba \bH^2$, 
  we get a finite volume hyperbolic manifold with
  totally geodesic boundary, whose fundamental group is the Apollonian group $\A$.
 We thank Curt McMullen for bringing this beautiful relation with the Whitehead link to our attention.

\medskip

\noindent{\bf Orbital counting problem in $\PSL_2(\br)\ba \PSL_2(\c)$.}
Observe that the number of circles in an Apollonian packing $\P$ of curvature at most $T$ is same as the number of
the vertical hemispheres  above circles in $\P$
of Euclidean height at least $T^{-1}$. Moreover
for a fixed bounded region $E$ in $\c$,
$N_\P(T, E)$ is same as the number of 
the vertical hemispheres above circles in $\P$
which intersects the cylindrical region 
\begin{equation}\label{et} E_T:=\{(x_1, x_2, y)\in \bH^3: x_1+ix_2\in E, \;\;T^{-1}\le  y\le r_0 \}\end{equation}
where $r_0>0$ is the radius of the largest circle in $\P$ intersecting $E$.

Since the vertical plane over the real line in $\c$ is preserved by $\PSL_2(\br)$, and $\PSL_2(\c)$ acts transitively
on the space of all vertical hemispheres (including planes),
the space of vertical hemispheres in $\bH^3$ can be identified with the homogeneous space $\PSL_2(\br)\ba \PSL_2(\c)$. Since $\P$ consists of finitely many $\A$-orbits of circles
in $\c$, which corresponds to finitely many $\mathcal A$-orbits of points in $\PSL_2(\br)\ba \PSL_2(\c)$,
understanding the asymptotic formula of $N_\P(T, E)$ is a special case of the following more general counting problem:
letting $G=\PSL_2(\c)$ and $H=\PSL_2(\br)$,
for a given sequence of growing compact subsets $\B_T$ in $H\ba G$ and a discrete $\mathcal A$-orbit $v_0 \A$ in $H\ba G$,
\begin{center}\it what is the asymptotic formula of the number $\# \B_T\cap v_0\A$?\end{center}
If $\A$ were of finite co-volume in $\PSL_2(\c)$, this type of question is well-understood due to the works of 
 Duke-Rudnick-Sarnak \cite{DRS}  and Eskin-McMullen \cite{EM}.
In the next section, we describe analogies/differences of this counting problem for discrete subgroups of infinite covolume.

\section{Counting, Mixing, and the Bowen-Margulis-Sullivan measure}\label{bmss}
\noindent{\bf Euclidean lattice point counting} We begin with a simple example of the lattice point counting problem in Euclidean space.
Let $G=\br^3$, $\Gamma =\z^3$ and let $B_T:=\{x\in \br^3: \|x\|\le T\}$ be the Euclidean ball of radius $T$ centered at the origin. 
In showing the well-known fact
$$\# \z^3 \cap B_T\sim \frac{4\pi}{3} T^3,$$ one way is to
count the $\G$-translates of a fundamental domain, say $\mathcal F:=[-\tfrac 12 , \tfrac 12)\times [-\tfrac 12, \tfrac 12)\times
[-\tfrac 12, \tfrac 12)$ contained in $B_T$, since each translate $\gamma  +\mathcal F$ contains precisely one
point, that is, $\gamma$, from $\Gamma$.
We have
\begin{multline}\label{eucc}
\frac{\op{Vol}(B_{T-1})}{\op{Vol}(\mathcal F)}\le \# \{\gamma + \mathcal F\subset B_{T-1}: \gamma\in \z^3\} 
 \\ \le \# \z^3 \cap B_T \le \# \{\gamma + \mathcal F\subset B_{T+1}: \gamma\in \z^3\} \le \frac{\op{Vol}(B_{T+1})}{\op{Vol}(\mathcal F)} ,$$
\end{multline} 

Since $\frac{\op{Vol}(B_{T\pm 1})}{\op{Vol}(\mathcal F)} =\frac{4\pi}{3} (T\pm 1)^3$,
we obtain that $$\# \z^3 \cap B_T =\frac{4\pi}{3} T^3+O(T^2).$$

This easily generalizes to the following: for any discrete subgroup $\Gamma$ in $\br^3$ and a sequence $B_T$ of compact subsets in $\br^3$,
we have $$\# \Gamma \cap B_T=\frac{\op{Vol}(B_{T})}{\op{Vol}(\Gamma\ba \br^3)}  +O(\op{Vol}(B_T)^{1-\eta})$$ 
provided
\begin{itemize}
 \item $\op{Vol}(\Gamma\ba \br^3)<\infty$;
 \item $\op{Vol} (\text{unit neighborhood of $\partial(B_T)$}) =O(\op{Vol}(B_T)^{1-\eta})$ for some $\eta>0$.
\end{itemize}

We have used here that the volume in $\br^3$ is computed with respect to the Lebesgue measure
which is clearly left $\Gamma$-invariant so that it makes sense to write $\op{Vol}(\Gamma\ba \br^3)$,
and that
the ratio $\tfrac{\op{Vol} (\text{unit neighborhood of $\partial(B_T)$})}{\op{Vol}(B_T)}$ tends to $0$ as $T\to \infty$.

\medskip

\noindent{\bf Hyperbolic lattice point counting}  We now consider the hyperbolic lattice counting problem for $\bH^3$.
Let $G=\PSL_2(\c)$ and $\Gamma$ be a torsion-free, co-compact, discrete subgroup of $G$. 
The group $G$ possess a Haar measure $\mu_G$ which is both left and right invariant under $G$, in particular, it is 
left-invariant under $\Gamma$. By abuse of notation, we use the same notation $\mu_G$ for the induced measure
on $\G\ba G$. Fix $o=(0,0,1)$ so that $g\mapsto g(o)$ induces an isomorphism of $\bH^3$ with $G/\operatorname{PSU}(2)$ and hence
$\mu_G$ also induces a left $G$-invariant measure on $\bH^3$, which will again be denoted by $\mu_G$.
Consider the hyperbolic ball $B_T=\{x\in \bH^3: d(o, x) \le T\}$ where $d$ is the hyperbolic distance in $\bH^3$.
Then, for a fixed fundamental domain $\mathcal F$ for $\G$ in $\bH^3$ which contains $o$ in its interior,
we have inequalities similar to \eqref{eucc}: 
\begin{multline}\label{naive}  \frac{\op{Vol}(B_{T-d})}{\op{Vol}(\Gamma\ba G )} \le
\# \{\gamma(\mathcal F)\subset B_{T-d}: \gamma\in \Gamma \} 
\\ \le \# \G(o)\cap B_T \le  \# \{\gamma(\mathcal F)\subset B_{T+d}: \gamma\in \Gamma \}
\le \frac{\op{Vol}(B_{T+d}) }{\op{Vol}(\Gamma\ba G)}
\end{multline}
where $d$ is the diameter of $\mathcal F$ and the volumes $\op{Vol}(B_{T\pm d})$
and $\op{Vol}(\Gamma\ba G)$ are computed with respect to $\mu_G$ on $\bH^3$ and $\G\ba G$ respectively.

If we had $\op{Vol}(B_{T-d})\sim \op{Vol}(B_{T+d})$ as $T\to \infty$ as in the Euclidean case, we would be able to conclude from here that
$\# \G (o) \cap B_T \sim \frac{ \op{Vol}(B_{T})}{\op{Vol}(\Gamma\ba G)}$ from \eqref{naive}. However, 
one can compute that $\op{Vol}(B_{T})\sim c \cdot e^{2T}$ for some $c>0$ and hence the asymptotic formula $\op{Vol}(B_{T-d})\sim \op{Vol}(B_{T+d})$ 
 is not true. This
suggests that
the above inequality \eqref{naive} gives too crude estimation of the edge effect arising from the intersections of 
$\gamma (\mathcal F)$'s with $B_T$ near the boundary of $B_T$.
It turns out that the mixing phenomenon of the geodesic flow on the unit tangent bundle $\T^1(\Gamma\ba \bH^3)$ 
 with respect to $\mu_G$
precisely clears out the 
fuzziness of the edge effect.
The mixing of the geodesic flow follows from
the following mixing of the frame flow, or equivalently, the decay of matrix coefficients due to Howe and Moore \cite{HM}:
Let $a_t:=\begin{pmatrix}
                       e^{t/2} & 0\\ 0 &e^{-t/2}
                      \end{pmatrix}$.
\begin{Thm} [Howe-Moore]  Let $\G<G$ be a lattice. For any $\psi_1, \psi_2\in C_c(\Gamma \ba G)$,
                      $$\lim_{t\to \infty}
                      \int_{\G\ba G}  \psi_1(ga_t) \psi_2(g) d\mu_G(g) = \frac{1}{\mu_G(\Gamma \ba G)} 
                      \int_{\G\ba G} \psi_1 d\mu_G \cdot \int_{\G\ba G} \psi_2 d\mu_G. $$ 
 \end{Thm}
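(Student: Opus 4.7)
The plan is to establish the Howe--Moore matrix coefficient decay via the Mautner phenomenon. First, since $\Gamma$ is a lattice the constant $\mathbf{1}$ lies in $L^2(\Gamma\backslash G)$, so orthogonally decompose $L^2(\Gamma\backslash G) = \mathbb{C}\mathbf{1} \oplus L^2_0$, where $L^2_0 := \{\psi : \int \psi\,d\mu_G = 0\}$. Writing $\psi_i = c_i + \psi_i^\circ$ with $c_i = \mu_G(\Gamma\backslash G)^{-1}\int\psi_i\,d\mu_G$, right $G$-invariance of $\mu_G$ annihilates the cross terms, leaving
$$\int_{\Gamma\backslash G}\psi_1(ga_t)\psi_2(g)\,d\mu_G \;=\; c_1 c_2\,\mu_G(\Gamma\backslash G) \;+\; \int_{\Gamma\backslash G}\psi_1^\circ(ga_t)\psi_2^\circ(g)\,d\mu_G.$$
The first summand is exactly the prescribed limit. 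So the task reduces to showing that for any unitary representation $(\pi,\mathcal{H})$ of $G=\PSL_2(\mathbb{C})$ with no nonzero $G$-invariant vectors (as is the case for the right regular representation on $L^2_0$), every matrix coefficient $\langle\pi(a_t)v,w\rangle$ tends to $0$ as $t\to\infty$.

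Second, I plan to run the Mautner argument. By Banach--Alaoglu, along any sequence $t_n\to\infty$ pass to a subsequence with $\pi(a_{t_n})v \rightharpoonup v'$ weakly. Let $U^+ = \bigl\{u^+(z) = \bigl(\begin{smallmatrix}1 & z \\ 0 & 1\end{smallmatrix}\bigr) : z\in\mathbb{C}\bigr\}$. A direct computation gives $a_{-t}u^+(z)a_t = u^+(e^{-t}z) \to e$ as $t\to+\infty$, so strong continuity of $\pi$ yields
$$\pi(u^+(z))\pi(a_{t_n})v - \pi(a_{t_n})v \;=\; \pi(a_{t_n})\bigl[\pi(a_{-t_n}u^+(z)a_{t_n})v - v\bigr] \;\longrightarrow\; 0$$
in norm. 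Pairing against any $w$ and passing to the weak limit gives $\pi(u^+(z))v' = v'$, i.e.\ $v' \in \mathcal{H}^{U^+}$.

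Third, it remains to show $\mathcal{H}^{U^+} = \{0\}$. The diagonal $A=\{a_t\}$ normalizes $U^+$, so $\mathcal{H}^{U^+}$ carries a unitary $A$-action; by the spectral theorem, decompose it as a direct integral $\int_{\widehat{A}}\mathcal{H}^{U^+}_\chi\,d\nu(\chi)$ over unitary characters of $A\cong\mathbb{R}$. For a spectral component $v'_\chi$ with $\pi(a_t)v'_\chi = \chi(a_t)v'_\chi$, run the Mautner trick with $U^- = \bigl\{u^-(z) = \bigl(\begin{smallmatrix}1 & 0 \\ z & 1\end{smallmatrix}\bigr)\bigr\}$ now using $t\to -\infty$ (since $a_{-t}u^-(z)a_t = u^-(e^{t}z) \to e$); because $|\chi(a_t)|=1$, the same computation forces $\pi(u^-(z))v'_\chi = v'_\chi$. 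Since $U^+$ and $U^-$ generate the simple group $\PSL_2(\mathbb{C})$, $v'_\chi$ is $G$-invariant, hence zero by hypothesis, so $v'=0$.

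\emph{The main obstacle} is this final step: the one-sided Mautner lemma furnishes only $U^+$-invariance, and manufacturing the opposing $U^-$-invariance needed to exploit simplicity of $G$ requires the spectral decomposition along $A$ above (Moore's ergodicity theorem). An alternative route, which avoids the direct-integral argument, is to decompose $\pi$ into irreducibles and invoke the known spherical function asymptotics for the principal and complementary series of $\PSL_2(\mathbb{C})$, which provide explicit exponential decay rates and quantitatively refine the qualitative statement above.
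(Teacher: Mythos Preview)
The paper does not prove this theorem; it simply attributes the mixing statement to Howe--Moore \cite{HM} and uses it as a black box, so there is no paper proof to compare against. Your reduction to vanishing of matrix coefficients on $L^2_0$ and the Mautner argument producing a nonzero $U^+$-invariant weak limit $v'$ (your Steps~1--3) are correct and standard.

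The gap is in Step~4. A direct integral decomposition $\mathcal{H}^{U^+}=\int_{\widehat A}^\oplus \mathcal{H}^{U^+}_\chi\, d\nu(\chi)$ does \emph{not} produce actual $A$-eigenvectors inside $\mathcal{H}$: the ``spectral components'' $v'_\chi$ live in abstract fiber spaces, not in $\mathcal{H}$. Since $U^-$ does not commute with $A$, it does not act fiberwise on this decomposition, so the expression ``$\pi(u^-(z))v'_\chi$'' is undefined and your Mautner computation cannot be run on $v'_\chi$. Invoking Moore's ergodicity theorem here is also circular, since that result is essentially equivalent to what you are proving.

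The standard repair is to show directly that $U^+$-invariance of $v'$ forces $A$-invariance, after which $v'$ \emph{is} a genuine $A$-eigenvector (with trivial character) and your Mautner step for $U^-$ goes through verbatim. For any $r\in\mathbb{R}$, take $s_n\to 0$ in $\mathbb{C}$ and set $t_n=(e^{r/2}-1)/s_n$, $t'_n=(e^{-r/2}-1)/s_n$; a direct computation gives
\[
u^+(t_n)\,u^-(s_n)\,u^+(t'_n)=\begin{pmatrix} e^{r/2} & 0 \\ s_n & e^{-r/2}\end{pmatrix}\longrightarrow a_r.
\]
Since $v'$ is $U^+$-fixed, $\pi\bigl(u^+(t_n)u^-(s_n)u^+(t'_n)\bigr)v'=\pi(u^+(t_n))\pi(u^-(s_n))v'$; strong continuity gives $\|\pi(u^-(s_n))v'-v'\|\to 0$, and unitarity of $\pi(u^+(t_n))$ then forces the left side to converge to $v'$ in norm. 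Comparing with strong continuity at $a_r$ yields $\pi(a_r)v'=v'$. Now $v'$ is fixed by $A$ and $U^+$, your argument gives $U^-$-invariance, and $U^+$, $U^-$ generate $\PSL_2(\mathbb{C})$, contradicting the absence of invariant vectors. Your alternative route through the explicit unitary dual of $\PSL_2(\mathbb{C})$ is valid but heavier than this two-line fix.
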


Indeed, using this mixing property of the Haar measure, there are now very well established counting result due to Duke-Rudnick-Sarnak \cite{DRS}, and Eskin-McMullen\cite{EM}):
we note that any symmetric subgroup of $G$ is locally isomorphic to $\SL_2(\br)$ or $\SU(2)$.
\begin{Thm} [Duke-Rudnick-Sarnak, Eskin-McMullen] \label{fc}  Let $H$ be  a symmetric subgroup of $G$ and $\G<G$ a lattice such that $\mu_H(\G\cap H\ba H)<\infty$, i.e., $H\cap \G$ is a lattice in $H$. 
 Then for any well-rounded sequence $B_T$ of compact subsets in $H\ba G$ and a discrete $\G$-orbit $[e]\G$,
 we have $$\# [e]\G\cap B_T \sim \frac{\mu_H((H\cap \G) \ba H)}{\mu_G(\G\ba G)} \cdot \op{Vol}(B_T) \quad\text{as $T\to \infty$. }$$
 Here the volume of $B_T$ is computed with respect to the invariant measure $\mu_{H\ba G}$ on $H\ba G$ which satisfies
 $\mu_G=\mu_H\otimes \mu_{H\ba G}$ locally.
\end{Thm}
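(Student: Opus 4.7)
The plan is to follow the Eskin--McMullen strategy: smear the counting problem by a bump function, use a disintegration of Haar measure to reduce to an $H\backslash G$ integral, and then invoke the Howe--Moore mixing theorem (which, via a wavefront-type argument, forces the translated $H$-orbits to equidistribute) to extract the main term.

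First I would set up the dictionary. Let $F_T$ be the indicator of $B_T\subset H\backslash G$, pulled back to a left $H$-invariant function on $G$, and set
$$f_T(g)\;=\;\sum_{\gamma\in (H\cap\Gamma)\backslash \Gamma} F_T(\gamma g),$$
so that $f_T$ descends to $\Gamma\backslash G$ and $f_T(e)=\#\,[e]\Gamma\cap B_T$ is exactly the quantity we want. Choose a nonnegative bump function $\psi$ on $\Gamma\backslash G$ with $\int\psi\,d\mu_G=1$, supported in a small neighborhood $U$ of $e$. I would then compute $\int_{\Gamma\backslash G} f_T\,\psi\,d\mu_G$ by unfolding the sum against $\psi$:
$$\int_{\Gamma\backslash G} f_T\,\psi\,d\mu_G \;=\; \int_{(H\cap\Gamma)\backslash G} F_T(g)\,\psi(g)\,d\mu_G(g).$$
Using the local product decomposition $\mu_G=\mu_H\otimes\mu_{H\backslash G}$, this becomes
$$\int_{H\backslash G} F_T(g)\,\Psi(g)\,d\mu_{H\backslash G}(g)\;=\;\int_{B_T}\Psi(g)\,d\mu_{H\backslash G}(g),$$
where $\Psi(g):=\int_{(H\cap\Gamma)\backslash H}\psi(hg)\,d\mu_H(h)$ is the periodization of $\psi$ along the translated $H$-orbit.

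The crux is then to show the equidistribution statement
$$\Psi(g)\;\longrightarrow\;\frac{\mu_H((H\cap\Gamma)\backslash H)}{\mu_G(\Gamma\backslash G)}\qquad\text{as }Hg\to\infty\text{ in }H\backslash G.$$
This is where Howe--Moore enters: since $H$ is symmetric, a Cartan-type decomposition $G=HAK$ lets one write $g=h a_t k$ with $t\to\infty$ controlling the escape in $H\backslash G$. Against a test function $\varphi$ on $\Gamma\backslash G$ the quantity $\int_{(H\cap\Gamma)\backslash H}\varphi(hg)\,d\mu_H(h)$ can, after a change of variables and a wavefront-style thickening (spreading $\psi$ slightly in the $H$-direction so that we are really integrating a matrix coefficient $\langle a_t\cdot \tilde\psi,\tilde\varphi\rangle$ on $\Gamma\backslash G$), be compared with a matrix coefficient that decays to the product of integrals by the Howe--Moore theorem stated just above. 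This yields, for any continuous compactly supported $\varphi$,
$$\int_{H\backslash G}\varphi(g)\,\Psi(g)\,d\mu_{H\backslash G}(g)\;\sim\;\frac{\mu_H((H\cap\Gamma)\backslash H)}{\mu_G(\Gamma\backslash G)}\int_{H\backslash G}\varphi\,d\mu_{H\backslash G},$$
so that $\Psi$ is asymptotically the constant $\mu_H((H\cap\Gamma)\backslash H)/\mu_G(\Gamma\backslash G)$ in a weak sense sufficient for our purposes. Combining this with the previous display gives
$$\int_{\Gamma\backslash G} f_T\,\psi\,d\mu_G\;\sim\;\frac{\mu_H((H\cap\Gamma)\backslash H)}{\mu_G(\Gamma\backslash G)}\cdot\operatorname{Vol}(B_T).$$

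Finally, to pass from this smoothed count back to the pointwise value $f_T(e)$, I would use the well-roundedness hypothesis on $\{B_T\}$: for any $\varepsilon>0$ there is a neighborhood $U$ of $e$ so that $\bigcap_{u\in U}B_T u$ and $\bigcup_{u\in U}B_T u$ have volumes within $(1\pm\varepsilon)$ of $\operatorname{Vol}(B_T)$. Choosing $\psi$ supported in such a $U$, the monotonicity $f_{T^-}(e)\le \int f_T\psi\le f_{T^+}(e)$ with slightly dilated parameters sandwiches $f_T(e)$ between quantities both asymptotic to the desired main term; letting $\varepsilon\to0$ concludes.

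The main obstacle I expect is the equidistribution step: one needs to convert mixing (a statement about the flow $a_t$ on $\Gamma\backslash G$) into equidistribution of the translated closed $H$-orbit $(H\cap\Gamma)\backslash H\cdot g$ as $g\to\infty$ in $H\backslash G$. The wavefront lemma, which controls how a small neighborhood of $e$ thickens under multiplication by $g=ha_tk$, is what makes this passage work and is the technical heart of the argument; one also needs $(H\cap\Gamma)\backslash H$ to have finite volume so that its periodization $\Psi$ is a legitimate bounded test function and not merely a distribution.
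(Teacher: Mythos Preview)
The paper does not supply its own proof of this theorem; it is quoted as a known result of Duke--Rudnick--Sarnak and Eskin--McMullen, with the surrounding discussion only indicating that the Howe--Moore mixing theorem is the key input. Your outline is a faithful and correct sketch of the Eskin--McMullen argument (unfolding to $(H\cap\Gamma)\backslash G$, disintegrating Haar measure, equidistribution of translated $H$-orbits via mixing plus the wavefront lemma, and unsmoothing via well-roundedness), so there is nothing to correct and nothing in the paper to compare it against beyond the shared reliance on mixing.
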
 A sequence $\{B_T\subset H\ba G \}$ is called {\it well-rounded} with respect to a measure $\mu$ on $H\ba G$ if the boundaries of $B_T$ are $\mu$-negligible,
more precisely, if for all small $\e>0$,
the $\mu$-measure of the $\e$-neighborhood of the boundary of $B_T$ is $O (\e \cdot \mu (B_T))$ as $T\to \infty$.

The idea of using the mixing of the geodesic flow in the counting problem goes back to Margulis' 1970 thesis
(translated in \cite{Ma}).

We now consider the case when $\G<G=\PSL_2(\c)$ is not a lattice, that is, $\mu_G(\G\ba G)=\infty$.
It turns out that as long as we have a left $\G$-invariant measure, say, $\mu$ on $G$,
satisfying 
\begin{itemize}
 \item  $\mu(\G\ba G)<\infty$; 
 \item $\mu$ is the mixing measure for the frame flow on $\G\ba G$, \end{itemize}
then the above heuristics of comparing the counting function for $\# \G (o)\cap B_T$ to the volume $\mu (B_T)$ can be made into a proof.

For what kind of discrete groups $\G$, do we have a left-$\G$-nvariant measure on $G$ satisfying these two conditions?
Indeed when $\G$ is geometrically finite,
the Bowen-Margulis-Sullivan measure $m^{\BMS}$ on $\G\ba G$ satisfies these properties. Moreover when $\G$ is convex cocompact (that is, geometrically finite with no parabolic elements), the Bowen-Margulis-Sullivan measure
is supported on a compact subset of $\G\ba G$.
Therefore $\G$ acts co-compactly in the convex hull $CH(\Lambda(\G))$ of the limit set $\Lambda(\G)$; recall that $\Lambda(\G)$
is the set of all accumulation points of $\Gamma$-orbits on the boundary $\partial(\bH^3)$.
  Hence if we denote by $\mathcal F_0$ a compact fundamental  domain for $\G$ in $CH(\Lambda(\G))$,
the inequality \eqref{naive} continues to hold if we replace the fundamental domain $\mathcal F$ of $\G$ in $\bH^3$
by $\mathcal F_0$ 
and compute the volumes with respect to $m^{\BMS}$:

\begin{multline}\label{naive22}  \frac{\tilde m^{\BMS}(B_{T-d_0})}{m^{\BMS}(\Gamma\ba G )} \le
\# \{\gamma(\mathcal F_0)\subset B_{T-d_0}: \gamma\in \Gamma \} 
\\ \le \# \G(o)\cap B_T \le  \# \{\gamma(\mathcal F_0)\subset B_{T+d_0}: \gamma\in \Gamma \}
\le \frac{\tilde m^{\BMS}(B_{T+d_0}) }{m^{\BMS} (\Gamma\ba G)}
\end{multline}
where $\tilde m^{\BMS}$ is the projection to $\bH^3$ of the lift of $m^{\BMS}$ to $G$ and $d_0$ is the diameter of $\mathcal F_0$.
This suggests a heuristic expectation:
$$\# \G(o)\cap B_T \sim \frac{\tilde m^{\BMS}(B_{T}) }{m^{\BMS} (\Gamma\ba G)}$$
which turns out to be true.

We denote by $\delta$ the Hausdorff dimension of $\Lambda(\G)$ which is known to
be equal to the critical exponent of $\G$.
Patterson \cite{Pa} and Sullivan\cite{Su} constructed a unique geometric probability measure $\nu_o$ on $\partial (\bH^3)$ satisfying 
that for any $\gamma \in \G$, $\gamma_* \nu_o$ is absolutely continuous with respect to $\nu_o$
and for any Borel subset $E$,
$$\nu_o(\gamma(E))=\int_{E} \left( \frac{d(\gamma_* \nu_o)}{d\nu_o}\right)^\delta  d\nu_o.$$
This measure $\nu_o$ is called the Patterson-Sullivan measure viewed from $o\in \bH^3$.
Then the Bowen-Margulis-Sullivan measure $m^{\BMS}$ on $\T^1(\bH^3)$ is given by 
$$dm^{\BMS}(v)= f(v)\; d\nu_o(v^+) d\nu_o(v^-)dt$$
where $v^{\pm}\in \partial(\bH^3)$ are the forward and the backward endpoints of the geodesic determined by $v$ and $t=\beta_{v^-}(o, v)$ measures the signed distance
of the horopsheres based at $v^-$ passing through $o$ and $v$. The density function $f$  is given by
$f(v)=e^{\delta (\beta_{v^+}(o,v) +\beta_{v^-}(o, v))}$ so that $m^{\BMS}$ is left $\G$-invariant 
Clearly, the support of $m^{\BMS}$ is given by the set of $v$ with $v^{\pm}\subset \Lambda(\G)$. Noting that $\T^1(\bH^3)$ is isomorphic to $G/M$ where
$M=\{\text{diag}(e^{i \theta}, e^{-i\theta})\}$, we will extend $m^{\BMS}$ to an $M$-invariant measure on $G$. 
We use the same notation $m^{\BMS}$ to denote the measure induced on $\G\ba G$.

\begin{Thm} For $\G$ geometrically finite and Zariski dense,
\begin{description}
 \item[(1) Finiteness] $m^{\BMS}(\G\ba G)<\infty$
 \item[(2) Mixing]  For any $\psi_1, \psi_2\in C_c(\Gamma \ba G)$, as $t\to \infty$,
                      $$
                      \int_{\G\ba G} \psi_1(ga_t) \psi_2(g) dm^{\BMS} (g) \to \tfrac{1}{m^{\BMS} 
                      (\Gamma \ba G)} \int_{\G\ba G} \psi_1  dm^{\BMS} \; \int_{\G\ba G} \psi_2  dm^{\BMS} .$$ 
 \end{description}
\end{Thm}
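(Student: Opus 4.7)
For part (1), the plan is to exploit the decomposition of the convex core of $\G\ba \bH^3$ into a compact piece together with finitely many standard cusp neighborhoods, which is available because $\G$ is geometrically finite. The support of $m^{\BMS}$ projects into the convex core, so finiteness on the compact piece is immediate and the real issue is to bound $m^{\BMS}$ on each cusp. The main ingredients are Sullivan's shadow lemma and the global measure formula for the Patterson--Sullivan measure $\nu_o$: if $p\in \Lambda(\G)$ is a parabolic fixed point of rank $k$, then $\nu_o$ of the shadow of a horoball at height $r$ around $p$ decays like a power of $r$ whose exponent is controlled by $\delta$ and $k$. Writing $m^{\BMS}$ in horospherical coordinates adapted to the cusp and using $\delta$-conformality of $\nu_o$, the cuspidal mass reduces to a sum over parabolic translates which converges under a strict inequality relating $\delta$ to the cusp ranks, an inequality that is automatic for geometrically finite Kleinian subgroups of $\PSL_2(\c)$.

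For part (2), I would split the argument into two stages: mixing of the geodesic flow $\{a_t\}$ on $\G\ba G/M$ with respect to the projected BMS measure, and then bootstrapping to mixing of the full frame flow on $\G\ba G$. For the first stage, the Hopf--Tsuji--Sullivan dichotomy gives that $\{a_t\}$ is conservative and ergodic with respect to $m^{\BMS}$, and by the Babillot--Rudolph criterion mixing is then equivalent to the length spectrum of primitive closed geodesics not being contained in a discrete subgroup of $\br$. Zariski density of $\G$ in $\PSL_2(\c)$ forces this non-arithmeticity, for instance by exhibiting two loxodromic elements whose translation lengths are $\q$-independent.

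The second stage is where the genuine difficulty lies. Since $M$ commutes with $\{a_t\}$, one decomposes test functions on $\G\ba G$ into their $M$-isotypic components and proves decay of correlations on each component separately. Following the strategy of Flaminio--Spatzier in the compact case and its extension to the infinite-volume geometrically finite setting, this amounts to establishing a spectral gap for twisted transfer operators acting on H\"older sections of line bundles over $\G\ba G/M$ associated to nontrivial characters of $M$; the gap is guaranteed by showing that no nontrivial character of $M$ is trivialized on the holonomies of all closed geodesics, which once more reduces to Zariski density of $\G$. The main obstacle is precisely this upgrade step, because a nontrivial $M$-equivariant factor of $(\G\ba G, a_t, m^{\BMS})$ would obstruct mixing, and ruling it out requires Dolgopyat-style oscillatory cancellation estimates (or equivalent representation-theoretic input) well beyond geodesic flow mixing itself.
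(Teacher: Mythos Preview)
The paper does not give its own proof of this theorem: immediately after the statement it simply attributes (1) to Sullivan and (2) to Flaminio--Spatzier and Winter, building on Rudolph and Babillot. So the only meaningful comparison is between your sketch and those cited arguments.

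Your outline for (1) is exactly Sullivan's approach (convex core decomposition, shadow lemma, cusp estimates via the rank inequality $\delta>k/2$, which is automatic for geometrically finite Kleinian groups), and your first stage of (2) --- ergodicity from Hopf--Tsuji--Sullivan plus mixing of the geodesic flow from non-arithmeticity of the length spectrum --- is precisely Babillot's argument. These match the cited proofs.

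The divergence is in the frame-flow upgrade. You frame it as establishing a spectral gap for twisted transfer operators via Dolgopyat-type oscillatory cancellation. That route would certainly yield mixing (indeed exponential mixing), but it is substantially heavier than what the cited references do, and it is not the Flaminio--Spatzier strategy. The actual arguments (Flaminio--Spatzier in the convex cocompact case, Winter in the geometrically finite case) treat the frame flow as a compact $M$-extension of the geodesic flow and use Brin's transitivity-group method: Zariski density forces the closure of the holonomy group to be all of $M$, so the extension is ergodic, and then Babillot's non-arithmeticity argument applied to the \emph{complex} length spectrum (translation length together with rotation part) upgrades this to mixing. No transfer-operator spectral gap is needed for the qualitative statement. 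Your route is not wrong, but it is a harder theorem than the one being cited, and attributing it to Flaminio--Spatzier is a misreading of their paper.
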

The finiteness result (1) is due to Sullivan \cite{Su} and the mixing result (2) for frame flow
 is due to Flaminio-Spatzier \cite{FS} and Winter \cite{Wi} based on the work of Rudolph \cite{Ru} and Babillot \cite{Ba}.

In order to state an analogue of Theorem \ref{fc} for a general geometrically finite group, we need to impose a 
condition on $(H\cap \G)\ba H$ analogous to the finiteness of the volume
$\mu_H(\G\cap H\ba H)$. In \cite{OS}, we define the so called skinning measure $\mu_H^{\PS}$ on $(\Gamma\cap H)\ba H$, which is intuitively the slice measure
on $H$ of $m^{\BMS}$. We note that $\mu_H^{\PS}$ depends on $\G$, not only on $H\cap \G$. A finiteness criterion for $\mu_H^{\PS}$ is given in \cite{OS}.
The following is obtained in \cite{OS} non-effectively and \cite{MO2} effectively.
\begin{Thm}  [O.-Shah, Mohammadi-O.]  \label{ifc} Let $H$ be  a symmetric subgroup of $G$ and $\G<G$ a geometrically finite and Zariski dense subgroup. Suppose that the skinning measure of
 $H\cap \G \ba H$ is finite, i.e., $\mu_H^{\PS} (\G\cap H\ba H)<\infty$.
 Then there exists an explicit locally finite Borel measure $\mathcal M_{H\ba G}$ on $H\ba G$
  such that for any well-rounded sequence $B_T$ of compact subsets in $H\ba G$ with respect to $\mathcal M_{H\ba G}$ and a discrete $\G$-orbit $[e]\G$,
 we have $$\# [e]\G\cap B_T \sim \frac{\mu_H^{\PS}((H\cap \G) \ba H)}{m^{\BMS} (\G\ba G)} \cdot \mathcal M_{H\ba G} (B_T)  \quad\text{as $T\to \infty$}. $$
\end{Thm}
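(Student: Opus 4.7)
The plan is to adapt the Eskin--McMullen mixing argument to the infinite-volume geometrically finite setting, replacing Haar measure by the Bowen--Margulis--Sullivan measure $m^{\BMS}$ and the $H$-invariant measure on $H$ by the skinning measure $\mu_H^{\PS}$. First I would reformulate the count: since $[e]\Gamma$ is discrete in $H\backslash G$ with stabilizer $H\cap\Gamma$, the number $\#[e]\Gamma\cap B_T$ equals $\#\{\gamma\in (H\cap\Gamma)\backslash\Gamma: H\gamma\in B_T\}$. Fix a small bump $\psi_\epsilon\in C_c(\Gamma\backslash G)$ with $\int\psi_\epsilon\,dm^{\BMS}=1$, supported near the identity coset. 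Define
\[
F_T^\epsilon(\Gamma g)=\sum_{\gamma\in(H\cap\Gamma)\backslash\Gamma}\chi_{\tilde B_T}(\gamma g),
\]
where $\tilde B_T\subset G$ is the preimage of $B_T$. A standard $\epsilon$-thickening sandwich reduces the problem to computing the smoothed pairings $\langle F_T^\epsilon,\psi_\epsilon\rangle_{m^{\BMS}}$ as $T\to\infty$, and then sending $\epsilon\to 0$ using well-roundedness of $B_T$ with respect to $\mathcal M_{H\backslash G}$.

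Next I would unfold and disintegrate. The unfolding gives
\[
\langle F_T^\epsilon,\psi_\epsilon\rangle_{m^{\BMS}}=\int_{(H\cap\Gamma)\backslash G}\chi_{\tilde B_T}(g)\,\psi_\epsilon(g)\,d\tilde m^{\BMS}(g),
\]
where $\tilde m^{\BMS}$ is the lift of $m^{\BMS}$ to $(H\cap\Gamma)\backslash G$. Using the product formula for $m^{\BMS}$ in Hopf coordinates, I would disintegrate $\tilde m^{\BMS}$ along the left $H$-fibration $(H\cap\Gamma)\backslash G \to H\backslash G$. The transverse slice on each $H$-orbit is intrinsically the skinning measure $\mu_H^{\PS}$ (the restriction of $m^{\BMS}$ to the unit normal bundle of the convex hull of $\Lambda(\Gamma)$ is transported by $H$), and the base measure that pops out on $H\backslash G$ is precisely the conformal measure $\mathcal M_{H\backslash G}$ built from $\nu_o$ on the forward endpoint of the $H$-coset. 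This produces a factorization of the pairing as an $\mu_H^{\PS}$-integral on $(H\cap\Gamma)\backslash H$ times a thickened-$B_T$ integral against $\mathcal M_{H\backslash G}$ on $H\backslash G$.

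The heart of the proof is to show that the thickened $B_T$-integral converges, upon mixing, to $\mathcal M_{H\backslash G}(B_T)/m^{\BMS}(\Gamma\backslash G)$ times the $\psi_\epsilon$-mass. Parametrize $B_T$ by foliating it along the geodesic flow direction $a_t$ emanating from a fixed transversal, so that $\chi_{\tilde B_T}$ becomes (up to small errors absorbed by well-roundedness) a superposition of translates $\chi_{U}\cdot(a_t\chi_V)$ with $U,V$ small flow boxes. Applying the Flaminio--Spatzier/Winter mixing theorem for the frame flow with respect to $m^{\BMS}$ collapses each such piece to the product of $m^{\BMS}$-masses divided by $m^{\BMS}(\Gamma\backslash G)$. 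Summing over the foliation reproduces $\mathcal M_{H\backslash G}(B_T)$ on one side and the skinning mass $\mu_H^{\PS}((H\cap\Gamma)\backslash H)$ on the other (the latter appearing through the integration over the $(H\cap\Gamma)\backslash H$ direction). Taking $T\to\infty$ and then $\epsilon\to 0$ via well-roundedness of $B_T$ with respect to $\mathcal M_{H\backslash G}$ yields the stated asymptotic.

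The main obstacle is the singular, noncompactly supported nature of the BMS and skinning measures in the geometrically finite (non-convex-cocompact) case: cusp excursions mean that the smoothed count, skinning integral, and $\mathcal M_{H\backslash G}$-integral can each lose mass into the thin part. Controlling this requires quantitative recurrence near the cusps and a careful verification that $\mu_H^{\PS}((H\cap\Gamma)\backslash H)<\infty$ actually implies the needed uniform decay of the bump translates, as well as that the well-roundedness hypothesis on $B_T$ survives the disintegration. For the effective version of Mohammadi--Oh, one additionally needs a quantitative rate in the mixing theorem, which demands a uniform spectral gap above the base eigenvalue $\delta(2-\delta)$ for $\Gamma\backslash\mathbb H^3$; this is where the bulk of the technical work concentrates.
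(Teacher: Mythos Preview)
Your overall architecture (thickening, unfolding, reducing to an equidistribution statement driven by mixing of $m^{\BMS}$) matches the paper's framework. But the measure-theoretic heart of your argument has a genuine gap.

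You claim that $\tilde m^{\BMS}$ disintegrates along the fibration $(H\cap\Gamma)\backslash G\to H\backslash G$ with fiber measure $\mu_H^{\PS}$, and that the base measure is $\mathcal M_{H\backslash G}$ ``built from $\nu_o$ on the forward endpoint.'' Neither assertion is correct. The BMS measure has density $d\nu_o(v^+)\,d\nu_o(v^-)\,dt$, i.e.\ Patterson--Sullivan on \emph{both} endpoints; it does not slice along an $H$-orbit to give the skinning measure, which carries PS density only on the outward normal direction. Likewise, $\mathcal M_{H\backslash G}$ is not a PS-on-one-endpoint construction; it has a mixed PS/Lebesgue structure. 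Because of this, your ``factorization of the pairing'' into a $\mu_H^{\PS}$-integral times an $\mathcal M_{H\backslash G}$-integral does not emerge from disintegrating BMS.

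The paper's route is different at exactly this point. Rather than disintegrating BMS, one proves that the translates $\Gamma\backslash\Gamma H a_t$, weighted by $\mu_H^{\PS}$ and renormalized by $e^{(2-\delta)t}$, become equidistributed with respect to the \emph{Burger--Roblin} measure $m^{\BR}$ on $\Gamma\backslash G$ (not $m^{\BMS}$). The mixing of $m^{\BMS}$ is the input used to establish this equidistribution, but the limiting measure is $m^{\BR}$, whose asymmetric form $d\nu_o(v^+)\,dm_o(v^-)\,dt$ (PS on one side, Lebesgue on the other) is precisely what matches the geometry of pushing the skinning measure along the geodesic flow. The measure $\mathcal M_{H\backslash G}$ then arises from integrating this BR-equidistribution over the $a_t$-direction inside $B_T$. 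So your proposal needs to replace ``disintegrate BMS and apply mixing directly'' with ``prove equidistribution of $H$-translates toward $m^{\BR}$ (via BMS mixing), then integrate''; without the BR measure in the picture, the asymptotic constant $\mu_H^{\PS}((H\cap\Gamma)\backslash H)/m^{\BMS}(\Gamma\backslash G)$ cannot be extracted.
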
 A special case of this theorem implies Theorem \ref{dist}, modulo the computation of the measure $\mathcal M_{H\ba G} (E_T)$ where $E_T$ is given in \eqref{et}. We mention that in the case when the critical exponent $\delta$ of $\G$ is strictly bigger than $1$, both Theorem \ref{ifc} and
Theorem \ref{dist} can be effectivized by \cite{MO2}.

The reason that we have the $\alpha$-dimensional Hausdorff measure in the statement of Theorem \ref{dist}
 is because the slice measure of $m^{\BMS}$ on each horizontal plane is the Patterson-Sullivan measure multiplied with
 a correct density function needed for the $\Gamma$-invariance, which
 turns out to coincide with the $\delta$-dimensional Hausdorff measure on the limit set of $\Lambda(\G)$
 when all cusps of $\G$ are of rank at most $1$, which is the case for the Apollonian group.

Counting problems for $\G$-orbits in $H\ba G$ are technically much more involved when $H$ is non-compact than when $H$ is compact, and relies on understanding the asymptotic distribution of
$\G\ba \G Ha_t$ in $\G\ba G$ as $t\to \infty$. When $H=\PSL_2(\br)$, the translate
$\G\ba \G Ha_t$ corresponds to the orthogonal translate of a totally geodesic surface for time $t$, and we showed that,
after the correct scaling of $e^{(2-\delta)t}$, $\G\ba \G Ha_t$ becomes equidistributed in $\G\ba G$ with respect to the Burger-Roblin measure $m^{\BR}$, which is the unique non-trivial
 ergodic horospherical invariant measure on $\G \ba G$. We refer to \cite{OhICM}, \cite{Oh1}, \cite{OS1} for more details.

\bigskip

\noindent{\bf More circle packings.} This viewpoint of approaching Apollonian circle packings via the study of Kleinian groups allows us to deal with
more general circle packings, provided they are  invariant under
a non-elementary  geometrically finite Kleinian group.

One way to construct such circle packings is as follows:
\begin{Ex}\label{exc} \rm
Let $X$ be a finite volume hyperbolic $3$-manifold with non-empty totally geodesic boundary.
Then\begin{itemize}
\item $\G:=\pi_1(X)$ is a geometrically finite Kleinian group;
\item By developing $X$ in the upper half space $\bH^3$,
the domain of discontinuity $\Omega(\G):=\hat {\mathbb C} -\Lambda(\G)$
consists of the disjoint union of open disks (corresponding to the boundary
components of the universal cover $\tilde X$).
\end{itemize}

Set $\P$ to be the union of circles which are boundaries
of the disks in $\Omega(\G)$. In this case,
$\op{Res}(\P)$ defined as the closure of all circles in $\P$ is equal to the limit set $\Lambda(\G) $.
\end{Ex} In section \ref{whitehead}, we explained how Apollonian circle packings can be described in this way.

\begin{figure}
 \begin{center}
    \includegraphics[height=7cm]{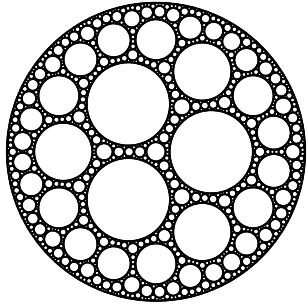}
 \end{center}
 \caption{Sierpinski curve} \label{sierp}\end{figure}

 Figure \ref{sierp}, due to McMullen, is also an example of a circle packing obtained in this way, here
 the symmetry group $\G$ is the fundamental group of a compact hyperbolic $3$-manifold with totally geodesic boundary being a compact surface of genus two.
  This limit set is called  a Sierpinski curve, being
  homeomorphic to the well-known Sierpinski Carpet.

Many more pictures of circle packings constructed in this way can be found in the book "Indra's pearls"
by  Mumford, Series and Wright  (Cambridge Univ. Press 2002).

For $\P$ constructed in Example \ref{exc},  we define as before $N_\P(T, E):=\#\{C\in \P: C\cap E\ne \emptyset, \text{curv}(C) \le T\}$
for any bounded Borel subset $E$ in $\c$.
\begin{Thm}[O.-Shah, \cite{OS}] 
There exist a constant $c_\Gamma>0 $ and a locally finite Borel measure $\omega_\P$ on $\Res(\P)$ such that
 for any bounded Borel subset $E\subset \mathbb C$ with $\omega_\P(\partial(E))=0$,
 $$N_\P(T, E)\sim c_\Gamma \cdot \omega_{\P} (E) \cdot T^{\delta} \quad\text{as $T\to \infty$} $$
where $\delta=\op{dim}_\H (\Res (\P))$. Moreover, 
 if $\G$ is convex cocompact or if the cusps of $\G$ have rank at most $1$, then $\omega_\P$ 
 coincides with the $\delta$-dimensional
Hausdorff measure on $\Res(\P)$.
\end{Thm}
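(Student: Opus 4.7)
The plan is to mirror the strategy used for Apollonian packings: translate the circle count into a $\G$-orbit count in the homogeneous space $H\ba G$ with $G=\PSL_2(\c)$ and $H=\PSL_2(\br)$, and then invoke Theorem \ref{ifc}. Each circle $C\in \P$ corresponds to a vertical hemisphere in $\bH^3$ whose Euclidean height equals $\op{curv}(C)^{-1}$, so via the identification $H\ba G\cong\{\text{vertical hemispheres in }\bH^3\}$, the condition $C\cap E\neq\emptyset$ with $\op{curv}(C)\le T$ translates to the hemisphere meeting the cylindrical region $E_T$ from \eqref{et}. Since $X$ has only finitely many totally geodesic boundary components, $\Omega(\G)$ is a union of finitely many $\G$-orbits of round disks, and thus $\P$ splits into finitely many $\G$-orbits of circles; it suffices to obtain the asymptotic for a single orbit $[g_0]\G\subset H\ba G$, where $g_0$ is chosen so that $g_0(\bH^2)$ is the hemisphere above a representative circle $C_0$.

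Next I would verify the hypotheses of Theorem \ref{ifc} for this data. The stabilizer $g_0^{-1}\G g_0\cap H$ is the fundamental group of the corresponding totally geodesic boundary component of $X$, hence a geometrically finite Fuchsian subgroup of $H$ whose limit set is the boundary circle $\partial C_0\subset \Lambda(\G)$. The finiteness of the skinning measure $\mu_H^{\PS}$ on $(g_0^{-1}\G g_0\cap H)\ba H$ follows from the criterion of \cite{OS}, since $\partial C_0$, being a circle in $\hat\c$, meets the cusps of $\G$ only along rank-one directions. Theorem \ref{ifc} then yields
\be\label{pf:orbit}
\#\bigl([g_0]\G\cap B_T\bigr)\sim \frac{\mu_H^{\PS}((g_0^{-1}\G g_0\cap H)\ba H)}{m^{\BMS}(\G\ba G)}\cdot \mathcal M_{H\ba G}(B_T),
\ee
where $B_T\subset H\ba G$ consists of hemispheres meeting $E_T$; well-roundedness of $B_T$ with respect to $\mathcal M_{H\ba G}$ follows from the smoothness of the sides of $E_T$ after an $a_t$-flow parametrization.

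To finish, I would compute $\mathcal M_{H\ba G}(B_T)$ and sum over the finitely many orbits. Parametrizing $B_T$ by $a_t$ acting on a transversal and using that $\mathcal M_{H\ba G}$ locally disintegrates as Lebesgue on $H$-leaves against a Patterson-Sullivan-type transverse factor, a change of variables gives $\mathcal M_{H\ba G}(B_T)\sim T^\delta\cdot \omega_\P(E)$ for a locally finite Borel measure $\omega_\P$ supported on $\Res(\P)=\Lambda(\G)$. Summing \eqref{pf:orbit} over orbit representatives produces the single constant $c_\G$. For the second assertion, when $\G$ is convex cocompact or has cusps of rank at most $1$, Sullivan's theorem identifies the Patterson-Sullivan density viewed from $o$ with the $\delta$-dimensional Hausdorff measure on $\Lambda(\G)$; tracing this identification through the disintegration that defines $\omega_\P$, the density factors needed for $\G$-equivariance reproduce exactly $\mathcal H^\delta$ restricted to $\Res(\P)$.

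The hard part is the measure-theoretic control near cusps: the finiteness of $\mu_H^{\PS}$, the well-roundedness of $B_T$, and the identification of $\omega_\P$ with $\mathcal H^\delta$ all hinge on how $H$-orbits interact with the cuspidal geometry of $\G\ba G$. This is immediate in the convex cocompact case and controllable under the rank-one assumption, where the Patterson-Sullivan and Hausdorff densities are known to coincide on $\Lambda(\G)$, but it genuinely obstructs the clean Hausdorff-measure statement for higher-rank cusps, which is precisely why the final sentence of the theorem is stated only under those hypotheses.
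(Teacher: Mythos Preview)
Your proposal is correct and follows essentially the same route the paper sketches: reduce circle counting to an orbital count in $H\backslash G$ with $H=\PSL_2(\br)$ via the cylindrical region $E_T$ of \eqref{et}, apply Theorem \ref{ifc}, and then identify the transverse factor of $\mathcal M_{H\backslash G}$ with the Patterson--Sullivan density, which Sullivan's theorem matches with $\mathcal H^\delta$ under the convex cocompact or rank-$\le 1$ cusp hypothesis. This is exactly the mechanism the survey indicates when it says a special case of Theorem \ref{ifc} yields Theorem \ref{dist} and that the slice of $m^{\BMS}$ on horizontal planes is the Patterson--Sullivan measure (hence $\mathcal H^\delta$ when cusps have rank at most one).
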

We refer to \cite{OS} for  the statement for more general circle packings.
\begin{figure} \begin{center}
 \includegraphics [height=7cm]{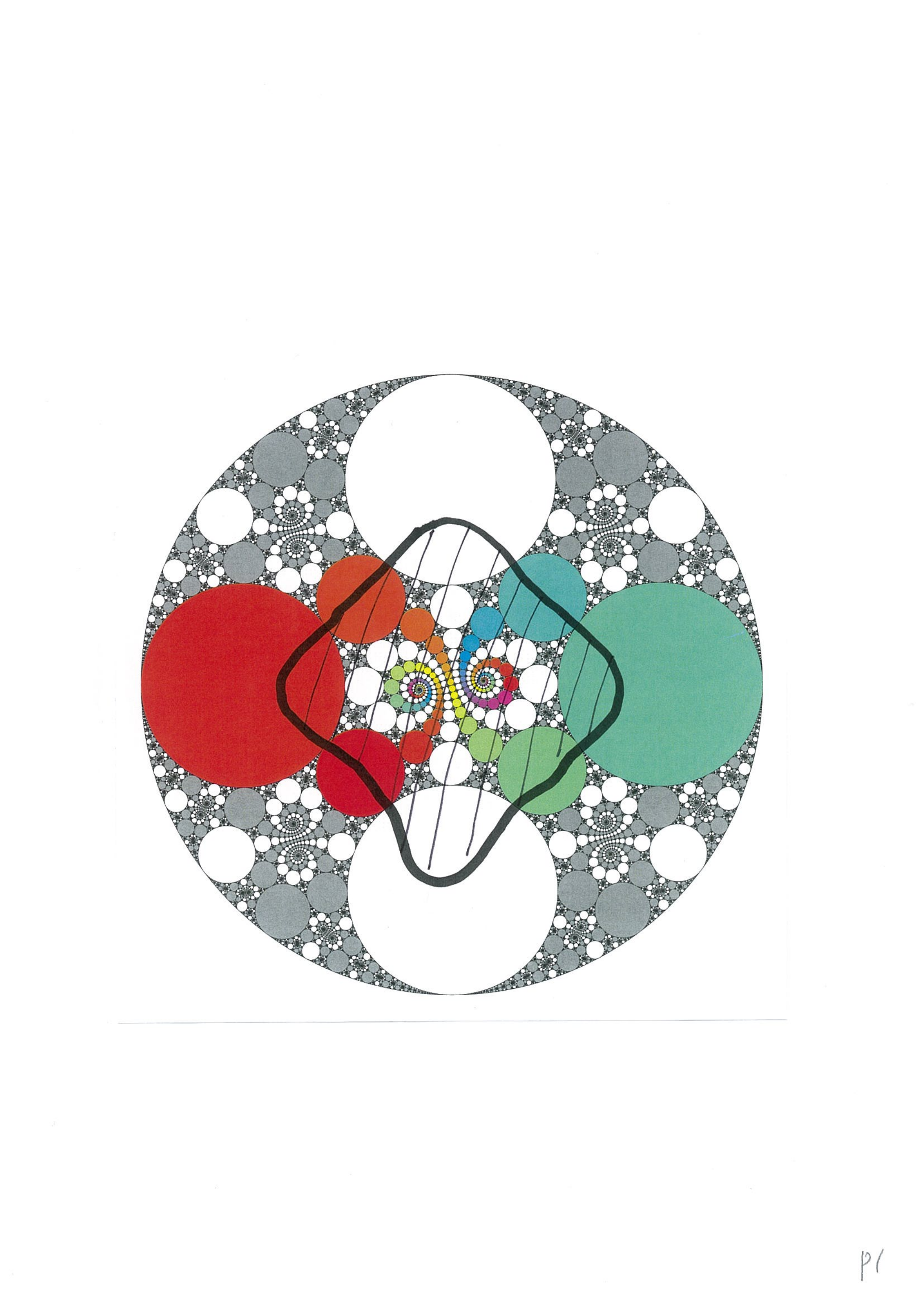}
 \end{center}
\end{figure}

\section{Integral Apollonian circle packings}
We call an Apollonian circle packing $\P$
 {\it integral} if every circle in $\P$ has integral curvature.
Does there exist  {\it any} integral $\P$?
The answer is positive thanks to the following beautiful theorem of Descartes: 
\begin{Thm}[Descartes 1643, \cite{Co}] A quadruple $(a,b,c,d)$ is the curvatures of four mutually tangent circles
if and only if it satisfies the
quadratic equation:
$$2(a^2+b^2+c^2+d^2)=(a+b+c+d)^2 .$$\end{Thm}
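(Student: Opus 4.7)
My plan is to reduce the theorem, via Apollonius (Theorem \ref{apt}), to a Vieta-type identity. Viewing the Descartes form as a quadratic in $d$, one computes
\[
d^2 - 2(a+b+c)\,d + \bigl(2(a^2+b^2+c^2) - (a+b+c)^2\bigr) = 0,
\]
so for fixed $a,b,c$ there are at most two admissible values of $d$, while Apollonius provides exactly two circles $C_4^\pm$ tangent to any mutually tangent triple $(C_1,C_2,C_3)$ of curvatures $a,b,c$. Letting $d_+, d_-$ denote the curvatures of $C_4^\pm$, it suffices by Vieta to establish
\[
d_+ + d_- = 2(a+b+c), \qquad d_+ d_- = 2(a^2+b^2+c^2) - (a+b+c)^2.
\]

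For this I would choose coordinates, exploiting that the Descartes equation is homogeneous of degree two on each side and hence invariant under similarities. Place the centers $p_i\in \br^2$ of the three fixed circles with radii $r_i$, subject to the tangency constraints $|p_i-p_j|^2 = (r_i + \epsilon_{ij} r_j)^2$ for signs $\epsilon_{ij}\in\{\pm 1\}$ recording external ($+$) versus internal ($-$) tangency. A fourth circle with center $q$ and signed radius $r$ is tangent to each $C_i$ iff
\[
|q - p_i|^2 = (r + \epsilon_i r_i)^2, \qquad i=1,2,3.
\]
Subtracting the $i=1$ equation from the $i=2,3$ equations kills the quadratic term $|q|^2 - r^2$ and produces two equations for $q=(x,y)$ affine in $r$. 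Solving this $2\times 2$ linear system expresses $q$ as a linear function of $r$; substituting back into any one of the original tangency equations yields a quadratic in $r$, equivalently in $d=1/r$, whose two roots are precisely $d_+$ and $d_-$.

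The remaining work is to simplify the coefficients of this quadratic and match them to the Descartes quadratic. The shortcut I would use is to clear denominators so that the symmetric combinations $\sum a_i$ and $\sum a_i^2$ emerge directly, using the between-centers identities $|p_i-p_j|^2 = (r_i+\epsilon_{ij}r_j)^2$ to eliminate all positional variables in favor of radii.

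The main obstacle is the algebraic bookkeeping: the configurations in Figure \ref{Fourcircles} correspond to different sign choices in $\epsilon_i, \epsilon_{ij}$, and one must organize the computation so that the output depends symmetrically on $(a,b,c,d)$ regardless of the configuration. A conceptually cleaner but less elementary alternative is to pass to the Lorentzian \emph{inversive coordinates}: assign to each oriented circle a null-vector in $\br^{1,3}$ whose last coordinate is the curvature and for which tangency is a universal Minkowski pairing condition; then Descartes' identity follows from a $4\times 4$ Gram determinant identity, sidestepping the sign casework at the cost of first developing the inversive-coordinate formalism.
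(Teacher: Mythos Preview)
The paper does not supply a proof of this theorem; it is stated with a reference to Coxeter \cite{Co} and then used as input. In fact the paper runs your opening logic in the opposite direction: it \emph{assumes} the Descartes relation for both quadruples $(a,b,c,d)$ and $(a,b,c,d')$ and subtracts to obtain the linear relation $d+d'=2(a+b+c)$, whereas you propose to establish the Vieta relations geometrically and deduce Descartes from them.

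Your coordinate approach is a standard and viable route. The plan---subtract tangency equations to kill $|q|^2-r^2$, solve for the unknown center as an affine function of $r$, substitute back to obtain a quadratic---is correct, and your flag that the sign bookkeeping across the configurations of Figure~\ref{Fourcircles} is the main nuisance is accurate. One small point: the equation you produce is naturally quadratic in $r$, not in $d=1/r$; dividing through by $r^2$ does yield a quadratic in $d$, but matching its leading coefficient to the Descartes quadratic (equivalently, verifying the product relation $d_+d_-=2(a^2+b^2+c^2)-(a+b+c)^2$) is exactly where the between-centers identities $|p_i-p_j|^2=(r_i+\epsilon_{ij}r_j)^2$ must be fed in, and it is worth carrying this through explicitly for at least one configuration rather than leaving it as ``simplify.'' You should also say a word about the converse direction of the ``if and only if'': your argument shows that given three mutually tangent circles of curvatures $a,b,c$, any fourth tangent curvature lies in $\{d_+,d_-\}$, but realizing an abstract quadruple satisfying the equation as an actual configuration still requires knowing that three mutually tangent circles with the given first three curvatures exist.

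Your ``less elementary'' alternative via inversive coordinates in $\br^{1,3}$ is in fact the approach more in keeping with the rest of the paper: the Descartes form $Q$ of signature $(3,1)$ and its orthogonal group are precisely the structures the paper later uses to define the integral Apollonian group, so developing that formalism here would pay for itself downstream.
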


In the above theorem, we ask circles to be oriented so that their interiors are
disjoint with each other. For instance, according to this rule, the quadruple of curvatures of
 four largest four circles in Figure \ref{stage} is $(-1,2,2,3)$ or $(1,-2,-2,-3)$,
 for which we can easily check the validity of the Descartes theorem: $ 2((-1)^2+2^2+2^2+3^2)=36 =(-1+2+2+3)^2$

 In what follows, we will always assign  the negative curvature to the largest bounding circle
 in a bounded Apollonian packing, so that all other circles will then have positive curvatures.

Given three mutually tangent circles of curvatures $a,b,c$,
the curvatures, say, $d$ and $d'$, of the two circles tangent to all three
must satisfy $ 2(a^2+b^2+c^2+d^2)=(a+b+c+d)^2$  and $2(a^2+b^2+c^2+(d')^2)=(a+b+c+d')^2$ by the
Descartes theorem. By subtracting the first equation from the second, we obtain the linear equation:
$$d+d'=2(a+b+c) .$$
So, if $a,b,c,d$ are integers, so is  $d'$.
Since the curvature of every circle from the second generation or later is $d'$ for some $4$ mutually tangent circles
of curvatures $a,b,c,d$ from the previous generation, we deduce:
\begin{Thm}[Soddy 1937]
If the initial 4 circles in an Apollonian packing $\mathcal P$ have integral curvatures, 
$\P$ is integral.
\end{Thm}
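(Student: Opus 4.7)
The plan is to argue by induction on the generation in the Apollonian construction, using the linear relation
\[
d + d' = 2(a+b+c)
\]
already derived in the paragraph preceding the statement. That relation came from subtracting the two instances of the Descartes quadratic for $(a,b,c,d)$ and $(a,b,c,d')$, and its content is the following: whenever four mutually tangent circles have curvatures $a,b,c,d$, the curvature $d'$ of the unique \emph{other} circle tangent to the three circles of curvatures $a,b,c$ is $d'=2(a+b+c)-d$. In particular, if $a,b,c,d\in\mathbb{Z}$, then $d'\in\mathbb{Z}$.

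With this ingredient in hand, the proof reduces to verifying that every circle of $\mathcal{P}$ produced beyond the initial configuration is the $d'$-circle of some quadruple of mutually tangent circles whose curvatures are already known to be integral. First I would record the inductive hypothesis: at generation $n$, there is a finite collection of circles, and the curvature of every circle produced so far is an integer; the base case $n=0$ (the four initial circles) is the hypothesis of the theorem. For the inductive step, recall that a circle of generation $n+1$ is obtained from a triple $C_i, C_j, C_k$ of mutually tangent circles (already existing at stage $n$) as the \emph{second} circle tangent to all three of them; the \emph{first} such tangent circle is some circle $C_\ell$ already present in the packing from an earlier stage. Applying the linear relation to the quadruple $(C_i,C_j,C_k,C_\ell)$ — all of whose curvatures are integers by the inductive hypothesis — shows that the new circle has integer curvature $2(\operatorname{curv}(C_i)+\operatorname{curv}(C_j)+\operatorname{curv}(C_k))-\operatorname{curv}(C_\ell)\in\mathbb{Z}$.

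There is essentially no analytic obstacle; the only thing to be careful about is the combinatorial bookkeeping: I would justify the claim that every circle produced at stage $n+1$ really does come paired, via Apollonius, with a circle $C_\ell$ already present at stage $\le n$. This is immediate from the way the construction is phrased in the paper — at each stage one adds, for every triple of mutually tangent circles in the current configuration, the two circles tangent to that triple — so from stage $1$ onward, one of the two tangent circles to any such triple was already placed at an earlier stage. Hence the induction closes and every circle of $\mathcal{P}$ has integer curvature, which is what we wanted to prove.
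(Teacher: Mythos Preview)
Your proof is correct and follows exactly the argument the paper gives in the paragraph preceding the theorem: induction on generation using the linear relation $d'=2(a+b+c)-d$ derived from Descartes' quadratic. The only difference is that you spell out the inductive bookkeeping more carefully than the paper does.
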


Combined with Descartes' theorem, for any integral solution of
$2(a^2+b^2+c^2+d^2)=(a+b+c+d)^2 $, there exists an integral Apollonian packing!
Because the smallest positive curvature must be at least $1$, an integral Apollonian packing cannot have arbitrarily large circles.
In fact, any integral Apollonian packing is either bounded or lies between two parallel lines.

\begin{figure}
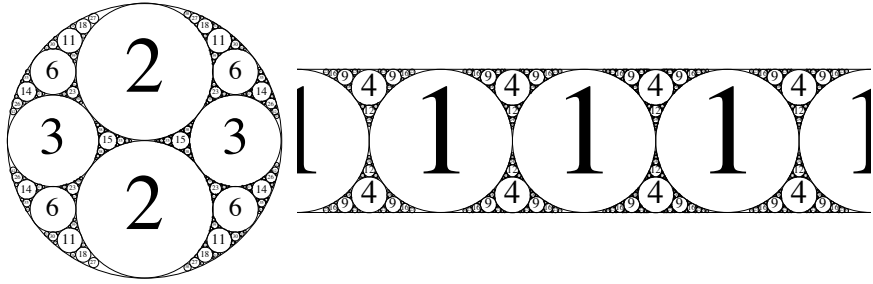

 \includegraphics [width=1.5in]{newer6}
 \includegraphics [width=3in]{InfPack}
 \caption{Integral Apollonian packings}
\end{figure}

For a given integral Apollonian packing $\P$,
it is natural to inquire about its Diophantine properties
 such as
\begin{Que}
\begin{itemize}
 \item Are there infinitely many circles with prime curvatures?
\item Which integers appear as curvatures? 
\end{itemize}
\end{Que}


We call $\mathcal P$ 
primitive, if $\op{g.c.d}_{C\in \P}(\op{curv}(C))=1$.
We call  a circle is  {prime} if its curvature is a prime number, and a pair of tangent prime circles will be called
{twin prime} circles.
There are no triplet primes of three mutually tangent circles, all having
odd prime curvatures.

\begin{Thm}[Sarnak 07]\label{sarn}
 There are {infinitely many} prime circles as well as twin prime circles
  in any primitive integral Apollonian packing.
\end{Thm}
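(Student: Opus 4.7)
The plan is to reduce both claims to a classical theorem of Iwaniec (1974), which asserts that every irreducible integral binary inhomogeneous quadratic polynomial with no fixed prime divisor represents infinitely many primes. The geometric input required is that, for each fixed circle $C_0 \in \P$, the set of curvatures of circles in $\P$ tangent to $C_0$ is exactly the set of values on $\z^2$ of such a polynomial $f_{C_0}(m,n)$.

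To produce this polynomial, fix $C_0 \in \P$ of curvature $a_0$ together with three circles $C_1, C_2, C_3 \in \P$ that are pairwise tangent and each tangent to $C_0$. The subgroup $H = \langle \tau_1, \tau_2, \tau_3 \rangle < \mathcal A$ generated by the three inversions whose dual circles are orthogonal to $C_0$ stabilizes $C_0$, and its orbit on $\{C_1, C_2, C_3\}$ sweeps out every circle in $\P$ tangent to $C_0$. Inverting at a tangent point of $C_0$ with one of the $C_i$ transforms $C_0$ and $C_i$ into two parallel lines and turns $\P$ locally into an Apollonian strip packing; in this strip picture, $H$ contains a rank-two abelian subgroup acting by Euclidean motions (with lattice parameters determined by the initial Descartes quadruple) that parametrizes the $H$-orbit essentially bijectively by $\z^2$. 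Expanding the curvature along this $\z^2$ via the Descartes relation $2(a^2+b^2+c^2+d^2)=(a+b+c+d)^2$ and the linear identity $d+d'=2(a+b+c)$ yields an explicit integral quadratic polynomial $f_{C_0}(m,n)$ whose values are exactly the curvatures in the orbit, with a positive-definite quadratic part whose leading coefficient is a positive multiple of $a_0$. This forces $f_{C_0}$ to be irreducible over $\q$, while the primitivity hypothesis on $\P$ (i.e.\ $\gcd_{C \in \P}\op{curv}(C)=1$) prevents $f_{C_0}$ from having a fixed prime divisor.

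Applying Iwaniec's theorem to $f_{C_0}$ produces infinitely many prime curvatures among the circles tangent to $C_0$, which settles the first assertion. For the twin prime statement, bootstrap: starting from any $C_0$, the first part supplies a tangent circle $\tilde C_0 \in \P$ with prime curvature $p$; rerunning the first part with $\tilde C_0$ as the fixed circle then yields infinitely many circles in $\P$ tangent to $\tilde C_0$ with prime curvature, each of which forms a twin prime pair with $\tilde C_0$.

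The main obstacle is the explicit parametrization in the middle step: one must locate the rank-two abelian subgroup inside $H$, verify that the induced map $\z^2 \to \{\text{curvatures of circles tangent to } C_0\}$ is surjective and generically injective, and compute $f_{C_0}$ precisely enough to check Iwaniec's irreducibility and no-fixed-prime-divisor hypotheses. The computations with the Descartes quadratic form are elementary but fiddly, and the primitivity of $\P$ enters exactly here to rule out a systematic congruence obstruction; everything else is either classical (Iwaniec) or already set up in the preceding sections via the Kleinian-group description of $\mathcal A$.
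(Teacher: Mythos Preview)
The paper does not prove this theorem; it is stated as Sarnak's result and immediately followed by a discussion of the prime counting conjecture, with the reference pointing to Sarnak's 2007 letter and his \emph{Monthly} article. So there is no ``paper's own proof'' to compare against.

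That said, your outline is exactly Sarnak's original argument: fix a circle $C_0$, show that the curvatures of circles in $\P$ tangent to $C_0$ are the values of a shifted integral binary quadratic form, invoke Iwaniec's half-dimensional sieve result on primes represented by such forms, and then bootstrap for twin primes by choosing $C_0$ itself to have prime curvature. The strategy and the key external input (Iwaniec 1974) are correct.

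One point to tighten: the stabilizer $H=\langle \tau_1,\tau_2,\tau_3\rangle$ of $C_0$ is not abelian and does not contain a rank-two abelian subgroup doing the work you describe. In the strip picture $H$ is (conjugate to) a congruence subgroup of $\PSL_2(\z)$ acting by M\"obius transformations on the boundary line; the $\z^2$ you want is not a lattice of translations but the standard linear action of this Fuchsian group on column vectors $(m,n)^t$, with the curvature of the tangent circle given by $f_{C_0}(m,n)=A m^2+2Bmn+Cn^2 - a_0$ for an integral form of discriminant $-4a_0^2$ determined by the initial Descartes quadruple. This is the computation you need to carry out, and once done the irreducibility (discriminant $-4a_0^2<0$) and absence of a fixed prime divisor (from primitivity of $\P$, checked on the three initial tangent circles) follow as you indicate.
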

In the rest of this section, we let $\P$ be a bounded  primitive integral Apollonian packing.
Theorem \ref{sarn} can be viewed as an analogue of the infinitude of prime numbers. In order to formulate
what can be considered as an analogue of the prime number theorem,
we
set $$\Pi_T(\P):=\# \{ \text{prime }C\in \mathcal P:\op{curv}(C)\le  T\}$$
and 
$$\Pi_T^{(2)}(\P):=\# \{\text{twin primes } C_1, C_2 \in \P: \op{curv}(C_i)\le  T \} .$$

Using the sieve method based
on heuristics on the randomness of M\"obius function, Fuchs and Sanden \cite{FS} conjectured:
\begin{Con}[Fuchs-Sanden]\label{fsc}
$$\Pi_T(\P)\sim c_1 \frac{N_\P(T)}{\log T};\quad \Pi_T^{(2)}(\P)\sim c_2 \frac{N_\P(T)}{(\log T)^2}   $$ 
where $c_1>0$ and $c_2>0$ can be given explicitly.
\end{Con}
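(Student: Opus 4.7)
The plan is to attack this via an affine-sieve/Heath--Brown framework tailored to the thin orbit $\P$, where the overall shape of the main term $c_\P T^\alpha/\log T$ (resp.\ $c_\P T^\alpha/(\log T)^2$) is already forced by Theorem \ref{kot} and partial summation, and the only serious task is to compute the arithmetic local densities and to supply nontrivial bilinear cancellation. Writing
\[
\Pi_T(\P)\log T \sim \sum_{C\in\P,\ \op{curv}(C)\le T}\Lambda(\op{curv}(C)),
\]
I would replace $\Lambda$ by its Heath--Brown identity decomposition, reducing the problem to Type~I sums (equidistribution of $\op{curv}(\P)$ in residue classes) and Type~II sums (bilinear forms in pairs of divisors).

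The Type~I step would upgrade Theorem \ref{LOa} to a congruence-uniform count: for squarefree $q$ and admissible $a\pmod q$,
\[
N_\P(T;q,a) = \beta(q,a)\, c_\P\, T^\alpha + O\bigl(T^{\alpha-\eta}\,q^{A}\bigr),
\]
with local density $\beta(q,a)$ determined by the $\A$-action on $(\z/q\z)^4$. This combines the effective BMS equidistribution of \cite{MO2} pushed to the congruence covers $\A(q)\ba\A$ with the uniform spectral gap for the Cayley graphs of $\A/\A(q)$ delivered by the Bourgain--Gamburd expansion machine, applicable because $\A$ is Zariski dense in the orthogonal group of the Descartes form. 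Fed into a Selberg upper-bound sieve this already gives $\Pi_T(\P)\ll N_\P(T)/\log T$ and $\Pi_T^{(2)}(\P)\ll N_\P(T)/(\log T)^2$; but the precise asymptotic (and in particular the correct constant) demands the Type~II input below.

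The central and hardest step is to obtain power savings in the bilinear sum
\[
S(M,N) = \sum_{m\sim M}\sum_{n\sim N}\alpha_m\beta_n\, r_\P(mn),
\]
where $r_\P(k)=\#\{C\in\P:\op{curv}(C)=k\}$ and $\alpha,\beta$ are arbitrary bounded sequences, uniformly for $T^{\vartheta_1}\le M\le T^{\vartheta_2}$. The strategy is to harvest cancellation from the orbital geometry: a factorisation $mn=\op{curv}(C)$ pins $C$ to a pair $(\g_1,\g_2)\in\A\times\A$ whose word lengths are essentially $\log M$ and $\log N$, and Cauchy--Schwarz recasts $|S(M,N)|^2$ as a coincidence count for $\op{curv}(\g_1\g_2\inv\g_3\g_4\inv C_0)=\op{curv}(C_0)$. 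Such a count is governed by a matrix-coefficient bound for the BMS-weighted correlation of $H$-orbits on $\G\ba G$ along two non-commuting directions, so the Flaminio--Spatzier/Winter decay together with transversality should yield a quantitative saving in balanced ranges $M\asymp N$. The honest obstacle is to extend this saving to the entire Heath--Brown range: this appears to require a genuinely new identity, an analogue of Vaughan's identity internal to the free-product structure $\A=\langle\tau_1,\tau_2,\tau_3,\tau_4\rangle$, together with an asymmetric spectral-gap estimate on the congruence tower. This is precisely where the argument confronts current technology, and it is the step I expect to be the main obstacle.

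Granted such a Type~II estimate, Heath--Brown's identity delivers $\Pi_T(\P)\sim \mathfrak{S}_1\cdot c_\P\, T^\alpha/\log T$ with $\mathfrak{S}_1=\prod_p(1-1/p)\inv\beta_p$, and a direct comparison with the Fuchs--Sanden random model of \cite{FS} identifies their $c_1$ with $\mathfrak{S}_1$. For the twin-prime asymptotic I would run the same scheme on the two-variable sum indexed by tangent pairs: Theorem \ref{ifc} applied to the product orbit on $(H\ba G)^2$ produces joint local densities $\beta^{(2)}(q,a_1,a_2)$ from the $\A$-action on pairs of tangent circles modulo $q$, a Selberg--Maynard weight controls the small-sieve contribution, and a two-parameter analogue of the Type~II estimate above finishes the proof and identifies $c_2$ with the two-dimensional singular series $\mathfrak{S}_2$ predicted by Fuchs--Sanden.
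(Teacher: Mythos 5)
The statement you are trying to prove is recorded in the paper as a conjecture, not a theorem: the text explicitly says that the lower bounds for Conjecture \ref{fsc} are still open and very challenging, and it offers no proof --- only upper bounds of the correct order of magnitude (Theorem \ref{primek}), obtained from Selberg's upper-bound sieve fed by the congruence-uniform counting of Theorem \ref{lastt} and the Bourgain--Gamburd--Sarnak expander property. Your Type~I paragraph is essentially a correct reconstruction of that upper-bound argument. But everything beyond it is not a proof: you yourself flag that the Type~II estimate over the full Heath--Brown range is the main obstacle and confronts current technology, and a plan whose central step is acknowledged to be out of reach establishes nothing.

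Moreover, the mechanism you propose for harvesting Type~II cancellation rests on a false premise. A factorisation $mn=\op{curv}(C)$ of the integer $\op{curv}(C)$ does not pin $C$ to a pair $(\g_1,\g_2)\in\A\times\A$ with word lengths $\log M$ and $\log N$: the curvature is a linear coordinate of the orbit vector $v_\P\g$, and its multiplicative structure as an integer bears no relation to the word-length (or any group-theoretic) decomposition of $\g$. This is exactly why the affine sieve on thin orbits yields almost-primes and upper bounds of the right order but no asymptotics --- there is no known substitute for the bilinear input, and the orbital geometry does not supply one. The honest conclusion is that the statement must remain a conjecture, supported (as in Fuchs--Sanden) by numerics and the heuristic randomness of the M\"obius function, with Theorem \ref{primek} standing as the best rigorous result in its direction.
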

{}


Based on the breakthrough of  {Bourgain, Gamburd, Sarnak} \cite{BGS} proving that
 the Cayley graphs of congruence quotients of the integral Apollonian group form an expander family,
together with  {Selberg}'s upper bound sieve, we obtain upper bounds of true order
of magnitude:

\begin{Thm}[Kontorovich-O. \cite{KO}]\label{primek}
For $T\gg 1$,
\begin{itemize}
\item $ \Pi_T(\P) \ll \frac{T^\alpha}{\log T}$;
\item $\Pi_T^{(2)}(\P) \ll \frac{T^\alpha}{(\log T)^2}$ .
\end{itemize}
\end{Thm}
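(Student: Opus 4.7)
The plan is to translate the two counts into affine sieve problems for the Apollonian group, supply the needed equidistribution in congruence quotients using the Bourgain--Gamburd--Sarnak expander theorem, and then apply Selberg's $\Lambda^2$ upper bound sieve.

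First I would fix a root Descartes quadruple $v_0\in\Z^4$ consisting of the curvatures of four mutually tangent circles in $\P$. By Section~\ref{whitehead} every circle in $\P$ appears (up to finitely many $\mathcal A$-orbits and finite stabilizers) as a coordinate of some $v_0\g$ with $\g\in\mathcal A$, and its curvature is the value of a fixed coordinate functional $f:\Z^4\to\Z$ on $v_0\g$. Hence
$$
\Pi_T(\P)\asymp\#\{\g\in\mathcal A:\,f(v_0\g)\le T,\,f(v_0\g)\text{ prime}\},
$$
and $\Pi^{(2)}_T(\P)$ is bounded above by the analogous count with the joint condition that $f(v_0\g)$ and $f'(v_0\g)$ are simultaneously prime, where $f'$ records the curvature of a preassigned tangent neighbour of the circle coded by $f$. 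Theorem~\ref{kot} already supplies the archimedean main term: the total count $\#\{\g\in\mathcal A:f(v_0\g)\le T\}$ has order $T^{\alpha}$.

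I would then set up Selberg's sieve on the sequence $\mathcal A_T=\{f(v_0\g):\g\in\mathcal A,\,f(v_0\g)\le T\}$. For each squarefree modulus $q$ one needs an estimate of the shape
$$
\#\{n\in\mathcal A_T:q\mid n\}=\beta(q)\,|\mathcal A_T|+E(T,q),
$$
where $\beta$ is a multiplicative local density of sieve dimension $1$ with $\beta(p)=O(1/p)$, and the error $E(T,q)$ is summable over $q\le T^{\theta}$ for some fixed $\theta>0$. The density $\beta(q)$ is computed inside the congruence quotient $\mathcal A/\mathcal A(q)$, and the key input is the Bourgain--Gamburd--Sarnak spectral gap for the Cayley graphs of $\mathcal A/\mathcal A(q)$, which, inserted into the orbital counting method behind Theorem~\ref{kot} (at bottom, mixing of $a_t$ on the level-$q$ congruence cover against $m^{\BMS}$), yields a power-saving error term of the form $E(T,q)\ll q^C\,T^{\alpha-\eta_0}$. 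With this level of distribution in hand, the Selberg $\Lambda^2$-sieve of dimension $1$ delivers $\Pi_T(\P)\ll T^\alpha/\log T$, and the same sieve of dimension $2$ applied to the pair $(f,f')$ delivers $\Pi^{(2)}_T(\P)\ll T^\alpha/(\log T)^2$.

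The principal obstacle is the level-of-distribution step: the expander bound furnishes $L^2$ control on congruence quotients, while the sieve asks for a \emph{pointwise} equidistribution at level $q$, so the two must be patched together through a smoothed orbital count and a dyadic approximation argument, with uniform dependence in $q$. A secondary difficulty arises for the twin-prime bound, where one must verify that the joint local density of $(f,f')$ is multiplicative, of sieve dimension $2$, and satisfies $\beta(p)<1$ at every prime; the inequality $\beta(p)<1$ at $p=2$ is precisely the absence-of-triple-primes observation, and is what keeps the two-dimensional Selberg singular series finite and the upper bound sharp in order of magnitude.
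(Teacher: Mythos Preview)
Your proposal is correct and follows essentially the same route as the paper: translate to orbital counting for the integral Apollonian group on Descartes quadruples in max-norm balls, use the Bourgain--Gamburd--Sarnak expander theorem to obtain uniform power-saving equidistribution in congruence covers (the paper's Theorem~\ref{lastt}), and then apply Selberg's upper bound sieve of dimension $1$ (resp.\ $2$). One minor correction: your remark linking $\beta(2)<1$ to the absence of triple primes conflates the two- and three-dimensional sieves---$\beta(2)<1$ for the pair $(f,f')$ asserts that \emph{some} tangent pair has both curvatures odd, which is true, whereas the triple-prime obstruction is the statement that a three-dimensional sieve on mutually tangent triples would have $\beta(2)=1$ and hence fail.
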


The lower bounds for Conjecture \ref{fsc} are still open and very challenging. However a problem which is more amenable to current technology
 is to count curvatures without multiplicity.
Our counting Theorem \ref{kot} for circles says that
the number of integers at most $T$ arising as
curvatures of circles in integral $\P$ counted with multiplicity,
is of order $T^{1.3...}$. 
 So one may hope that a positive density (=proportion) of integers arises as curvatures,
as conjectured by Graham, Lagarias, Mallows, Wilkes, Yan (Positive density conjecture) \cite{G2}.

\begin{Thm}[Bourgain-Fuchs \cite{BF}]
For a primitive integral Apollonian packing $\P$,
$$\#\{\op{curv} (C)\le  T: C\in \P\} \gg T.$$
\end{Thm}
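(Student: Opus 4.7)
My plan is a second-moment (Cauchy--Schwarz) reduction. Let $R(n) := \#\{C \in \P : \op{curv}(C) = n\}$ and $\kappa_\P(T) := \#\{n \le T : R(n) \ne 0\}$; by Theorem~\ref{kot}, $\sum_{n \le T} R(n) = N_\P(T) \sim c_\P T^\alpha$, and Cauchy--Schwarz gives
$$\kappa_\P(T) \;\ge\; \frac{\bigl(\sum_{n \le T} R(n)\bigr)^{2}}{\sum_{n \le T} R(n)^{2}} \;\asymp\; \frac{T^{2\alpha}}{\sum_{n \le T} R(n)^{2}}.$$
Since $\alpha>1$, it suffices to prove the second-moment bound
$$\sum_{n \le T} R(n)^{2} \;\ll\; T^{2\alpha-1}, \qquad (\star)$$
which is a count of ordered pairs $(C,C') \in \P^{2}$ of circles sharing a common curvature $\le T$.

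To attack $(\star)$, I would parametrize circles of $\P$ locally by integer points on binary quadratic forms. Fix a base circle $C_0 \in \P$ of curvature $a_0$, and apply the M\"obius argument from the proof of Theorem~\ref{apt}, sending a tangency point on $C_0$ to $\infty$. Then $C_0$ becomes a horizontal line and the circles of $\P$ tangent to $C_0$ form a Ford-type array parametrized by primitive pairs $(p,q) \in \Z^{2}$; pulling back, the curvatures of these tangent circles take the form $a_0 f_{C_0}(p,q) + b_{C_0}$ for a positive-definite integral binary quadratic form $f_{C_0}$ and integer $b_{C_0}$ determined by the Descartes quadruple at $C_0$ (the arithmetic structure underlying Theorem~\ref{sarn}).

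I then split the pairs $(C,C')$ contributing to $(\star)$ according to whether they admit a common tangent circle $C_0 \in \P$. Pairs tangent to a common $C_0$ correspond to double representations of $n - b_{C_0}$ by $a_0 f_{C_0}$; the classical bound $\sum_{m \le X} r_{f}(m)^{2} \ll X(\log X)^{O(1)}$ together with the partial-summation estimate $\sum_{C_0 \in \P,\,\op{curv}(C_0) \le T} \op{curv}(C_0)^{-1} \asymp T^{\alpha-1}$ (which follows from Theorem~\ref{kot}) contributes $O(T^{\alpha}(\log T)^{O(1)}) \ll T^{2\alpha-1}$ since $\alpha > 1$.

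The hard part will be pairs $(C,C')$ of equal curvature that admit \emph{no} common tangent in $\P$: these can lie in widely separated regions of the packing and are not constrained by a single quadratic form. Here one needs global arithmetic input — either the spectral-gap / expander property of the Apollonian congruence quotients due to Bourgain--Gamburd--Sarnak (cited before Theorem~\ref{primek}), or an effective equidistribution statement of the form of Theorem~\ref{ifc}. Bourgain and Fuchs surmount this obstacle via a sieve-type argument that reduces the count of such ``unconstrained'' coincidences to a Diophantine question about joint representations by families of shifted positive-definite binary quadratic forms of bounded discriminant, yielding $(\star)$ and hence the theorem.
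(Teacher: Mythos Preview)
The paper is a survey and does \emph{not} prove this theorem; it merely states the Bourgain--Fuchs result with citation to \cite{BF}. So there is no ``paper's own proof'' to compare against. I will instead comment on how your outline relates to the actual argument in \cite{BF}.

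Your Cauchy--Schwarz skeleton is indeed the shape of the Bourgain--Fuchs proof, and the binary quadratic form parametrization of curvatures of circles tangent to a fixed base circle $C_0$ is exactly the arithmetic input they use. But there is a genuine structural divergence: Bourgain and Fuchs do \emph{not} attempt to prove the full second-moment bound $(\star)$ for the global multiplicity $R(n)$. Instead they fix a finite collection $S$ of base circles (with curvatures up to roughly $T^{1/2}$), define a \emph{restricted} multiplicity $\tilde R(n)$ counting only those $C_0\in S$ for which $n$ is represented by the associated shifted form $f_{C_0}$, and run Cauchy--Schwarz on $\tilde R$. The second moment $\sum_{n\le T}\tilde R(n)^2$ is then a sum over \emph{pairs of base circles} $(C_0,C_0')\in S^2$ of the number of common values of two shifted binary quadratic forms --- a classical Diophantine problem with no reference to the packing geometry at all.

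Consequently, your ``hard part'' --- pairs $(C,C')$ of equal curvature admitting no common tangent in $\P$ --- is precisely what Bourgain--Fuchs \emph{avoid} rather than ``surmount''. By passing to the restricted count $\tilde R$, every contributing pair is automatically indexed by a pair of base circles in $S$, and the global coincidence problem never arises. Your claim that they handle the unconstrained pairs via a sieve and expander input is a mischaracterization: neither the spectral gap nor Theorem~\ref{ifc} enters their argument. The full bound $(\star)$ you propose is considerably stronger than what \cite{BF} establishes and, as far as I know, remains open.
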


A stronger conjecture, called the Strong Density conjecture, of Graham et al. says that
every integer occurs as the value of a curvature of a circle in $\P$,
unless there are congruence obstructions.
Fuchs \cite{Fu1} showed that the only congruence obstructions are modulo $24$, and hence
the strong positive density conjecture (or the local-global principle conjecture) says that every sufficiently large integer which is congruent to a curvature of a circle in $\P$ modulo $24$ must occur
as the value of a curvature of some circle in $\P$.
This conjecture is still open, but there is now a stronger version of the positive density theorem:
\begin{Thm}[Bourgain-Kontorovich \cite{BK}] For a primitive integral Apollonian packing $\P$,
$${\#\{\op{curv} (C)\le  T: C\in \P\}} \sim \frac{\kappa(\P)}{24}\cdot T $$
where $\kappa(\P)>0$ is the number of residue classes mod $24$ of curvatures of $\P$. \end{Thm}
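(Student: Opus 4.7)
The upper bound $\#\{\op{curv}(C)\le T\} \le (\kappa(\P)/24)T + O(1)$ is immediate from Fuchs's congruence obstruction: every curvature in $\P$ lies in one of $\kappa(\P)$ admissible residue classes modulo $24$. The task is therefore to prove the matching lower bound, namely that all but $o(T)$ admissible integers $n\le T$ occur as some $\op{curv}(C)$.

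Realize the set of curvatures as $\{(v_0\g)_1 : \g\in\A\}$, where $v_0$ is a root Descartes quadruple, the first-coordinate projection records the curvature, and $\A$ is the integer Apollonian group. For a parameter $N\asymp T$, fix a bounded ``archimedean ensemble'' $\cF_N\subset\A$ of cardinality $\asymp N^{\alpha}$ (built from bounded-length words in the generators, truncated by an archimedean norm), and set
$$R_N(n):=\#\{\g\in\cF_N : (v_0\g)_1 = n\}.$$
By Fourier expansion, $R_N(n)=\int_0^1 S_N(\theta)\,e(-n\theta)\,d\theta$ with $S_N(\theta):=\sum_{\g\in\cF_N}e(\theta(v_0\g)_1)$. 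Following Hardy--Littlewood, partition $[0,1]$ into major arcs $\mathfrak{M}$ around rationals $a/q$ with $q\le Q$ (for some $Q$ to be optimized) and a minor-arc complement $\mathfrak{m}$.

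On $\mathfrak{M}$, the sum $S_N(a/q+\beta)$ factors into an arithmetic residue sum modulo $q$ times an archimedean oscillatory sum. The crucial input is the expander property of the congruence Cayley graphs $\A/\A(q)$ of Bourgain--Gamburd--Sarnak (the engine already behind Theorem \ref{primek}): it produces equidistribution of $v_0\g \bmod q$ with a power saving uniform in $q$, which allows one to sum in $q$ and extract a singular series $\mathfrak{S}(n)$ bounded below by a positive constant on admissible residue classes mod $24$ and vanishing otherwise. This yields the expected main term of order $\mathfrak{S}(n)N^{\alpha-1}$ for the major-arc contribution to $R_N(n)$.

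The main obstacle is the minor-arc bound. By Parseval it suffices to prove
$$\int_{\mathfrak{m}}|S_N(\theta)|^2\,d\theta = o(N^{2\alpha-1}),$$
which reduces to a pointwise power saving $|S_N(\theta)|\ll N^{\alpha-\eta}$ uniformly for $\theta\in\mathfrak{m}$. Achieving this requires combining the archimedean geometry of $\cF_N$ (decomposing long generator products along a tree-like length scale) with a bilinear-forms estimate that converts the arithmetic cancellation coming from expansion mod $q$ into genuine cancellation in the continuous parameter $\theta$; this is where the bulk of the technical work lies, and where Bourgain--Kontorovich's innovation is concentrated. With both arc estimates in hand, one has $R_N(n)=\mathfrak{S}(n)\mathcal{I}_N(n)+\text{(error)}$ with the error controlled in $L^2$ outside an exceptional set of admissible $n\asymp N$ of size $o(N)$. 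Hence $R_N(n)>0$ for at least $(1-o(1))\kappa(\P)N/24$ admissible $n\le N$, and taking $N=T$ matches the upper bound to give the asymptotic.
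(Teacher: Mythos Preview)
The paper is a survey and gives no proof of this theorem; it is simply stated with attribution to Bourgain--Kontorovich \cite{BK}. So there is nothing in the paper to compare your argument against. That said, your outline does track the architecture of the actual \cite{BK} proof: the upper bound from Fuchs's mod~$24$ obstruction, a Hardy--Littlewood decomposition of a representation function, major arcs handled via the expander/spectral-gap input, and an $L^2$ minor-arc bound feeding into an exceptional-set estimate.

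Where your sketch has a genuine gap is the minor-arc mechanism, which is precisely where the difficulty lies. You propose to take a generic ensemble $\cF_N\subset\A$ and obtain $|S_N(\theta)|\ll N^{\alpha-\eta}$ by ``decomposing long generator products along a tree-like length scale'' together with a bilinear estimate. No such argument is known to work: a purely group-theoretic bilinear splitting of the thin orbit does not by itself produce cancellation in the continuous parameter $\theta$. What \cite{BK} actually exploits is the classical fact (going back to Sarnak's proof of Theorem~\ref{sarn}) that if one fixes a circle of curvature $a$ in $\P$, the curvatures of all circles tangent to it are the values $f_a(x,y)-a$ of a shifted primitive binary quadratic form of discriminant $-4a^2$, as $(x,y)$ ranges over a congruence lattice. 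The ensemble is then built in two layers: a sparse family of base curvatures $a\ll T^{\eta}$ produced by the thin group, and for each such $a$ the full $\z^2$-lattice input to $f_a$. The minor-arc exponential sum thus becomes a sum of theta-type objects $\sum_{x,y} e(\theta f_a(x,y))$, and the required power saving comes from Poisson/Kloosterman analysis of these quadratic-form sums, averaged over $a$. This arithmetic structure --- not the Cayley-tree geometry of $\A$ --- is the engine behind the minor-arc bound, and without it your step ``this is where the bulk of the technical work lies'' is not a proof but a restatement of the problem.
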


Improving Sarnak's result on the infinitude of prime circles, Bourgain showed that
a positive fraction of prime numbers appear as curvatures in $\P$.
\begin{Thm}[Bourgain  \cite{Bo}]
 $$\#\{\op{prime} \op{curv} (C) \le  T: C\in \P\} \gg  \frac{T}{\log T} .$$
\end{Thm}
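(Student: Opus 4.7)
The plan is to adapt Sarnak's strategy for Theorem \ref{sarn} and exploit a one-dimensional arithmetic slice of the Apollonian orbit on which classical sieve theory can be brought to bear. The full orbit $\A v_0$ has fractal dimension $\alpha \approx 1.3$ and is thus not directly amenable to Iwaniec-type methods, but if we fix a single circle $C_0 \in \P$ of curvature $a_0$ and look only at curvatures of circles tangent to $C_0$, the corresponding subset of $\Z$ becomes genuinely one-dimensional. Concretely, let $\A_0 = \langle S_2, S_3, S_4 \rangle \le \A$ be the subgroup generated by the three swaps that fix the first coordinate. Starting from a Descartes quadruple $v_0 = (a_0, b_0, c_0, d_0)$ containing $C_0$, the orbit $\A_0 v_0$ lies on the affine conic
$$\{(x_2, x_3, x_4) \in \Z^3 : 2(a_0^2 + x_2^2 + x_3^2 + x_4^2) = (a_0 + x_2 + x_3 + x_4)^2\},$$
and a rational parameterization of this conic (using $(b_0, c_0, d_0)$ as basepoint) identifies its integer points with an $\SL_2(\Z)$-orbit on $\Z^2$. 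In these coordinates the second coordinate $x_2$ becomes an inhomogeneous positive-definite binary quadratic polynomial
$$f(m, n) = A m^2 + B m n + C n^2 + a_0$$
with coefficients determined by the tangency configuration at $C_0$. The set of curvatures of circles in $\P$ tangent to $C_0$ is thus the set of values $f(m, n)$ with $(m, n)$ in a finite union of orbits of a congruence subgroup $\Gamma \le \SL_2(\Z)$.

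Next I would invoke Iwaniec's half-dimensional sieve theorem on primes represented by an inhomogeneous irreducible binary quadratic polynomial. Because $\P$ is primitive, Fuchs's mod-$24$ analysis \cite{Fu1} guarantees that the only congruence obstructions to $f(m,n)$ being prime are the trivial ones at the small primes $2$ and $3$, and in particular the singular series is strictly positive. Iwaniec's theorem then yields
$$\#\{(m, n) \in \Z^2_{\mathrm{prim}} : f(m, n) \le T, \; f(m, n) \text{ prime}\} \gg \frac{T}{\log T},$$
and each prime so produced is by construction the curvature of a circle in $\P$ tangent to $C_0$. This already gives a lower bound of the claimed order of magnitude for curvatures counted \emph{with multiplicity of representation}.

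The main obstacle is the passage from this count to the count of \emph{distinct} prime curvatures. A single prime $p$ may be represented as $p = f(m, n)$ by as many as $O(p^\e)$ pairs $(m, n)$, arising from the divisor-function bound for representations of $p - a_0$ by the underlying form $Q(m,n) = Am^2 + Bmn + Cn^2$. A naive extraction would therefore yield only $\gg T^{1-\e}/\log T$ distinct prime curvatures, short of the asserted bound. Bourgain's key contribution \cite{Bo} is to control this multiplicity on average through Bombieri-Vinogradov-type equidistribution of curvatures in arithmetic progressions; the uniform-in-modulus estimates needed for this rest on the expander property of congruence quotients of the integral Apollonian group established by Bourgain-Gamburd-Sarnak \cite{BGS}. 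Combining the quadratic-form slice, Iwaniec's sieve, and the expander-based equidistribution then delivers the full bound $\gg T/\log T$ on distinct prime curvatures.
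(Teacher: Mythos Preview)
The survey you are working from does not actually prove this theorem: it merely states the result and cites Bourgain's paper \cite{Bo}. So there is no in-text proof to compare against; the relevant comparison is with Bourgain's own argument.

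Your first two paragraphs are on target and do match Bourgain's starting point: fix a circle $C_0$ of curvature $a_0$, use Sarnak's observation that the stabilizer $\langle S_2,S_3,S_4\rangle$ acts (after the conic parametrization) through a congruence subgroup of $\SL_2(\Z)$, so that curvatures of circles tangent to $C_0$ are exactly the values of a shifted positive-definite binary form $f_{a_0}(m,n)=Q_{a_0}(m,n)+a_0$ with $\operatorname{disc}(Q_{a_0})=-4a_0^2$; then apply Iwaniec's theorem on primes represented by such polynomials to get $\gg T/\log T$ representations of primes. You also correctly isolate the multiplicity obstruction: a single prime $p$ may have $r_{Q_{a_0}}(p-a_0)$ representations, and the divisor bound gives only $T^{1-\varepsilon}/\log T$ distinct primes.

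The gap is in your last paragraph. Once $a_0$ is fixed, the multiplicity $r_{Q_{a_0}}(p-a_0)$ is a purely classical arithmetic function of $p$; it has nothing to do with the Apollonian group, and a Bombieri--Vinogradov statement for curvatures in progressions (which is what the expander property of $\mathcal A(q)\backslash\mathcal A$ would give you) does not bound it. So invoking the Bourgain--Gamburd--Sarnak expander machinery here is a non sequitur. Bourgain's actual resolution is different and does not use expanders at all: rather than working with a single base circle, he lets $a$ range over a large set $\mathcal S$ of curvatures already in $\P$ and runs a Cauchy--Schwarz/second-moment argument in the spirit of the Bourgain--Fuchs positive-density proof. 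The first moment $\sum_{a\in\mathcal S}\sum_{p\le T} r_{Q_a}(p-a)$ is bounded below by $|\mathcal S|\cdot T/\log T$ via Iwaniec (applied uniformly in $a$), while the second moment $\sum_{p\le T}\bigl(\sum_{a\in\mathcal S} r_{Q_a}(p-a)\bigr)^2$ is controlled by an elementary lattice-point count for the system $Q_a(m,n)+a=Q_{a'}(m',n')+a'$. Balancing these yields $\gg T/\log T$ distinct prime curvatures. Your proposal would be repaired by replacing the expander paragraph with this averaging-over-base-circles mechanism.
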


\noindent{\bf Integral Apollonian group.}
In studying the Diophantine properties of integral Apollonian packings, we
work with the integral Apollonian group, rather than the geometric Apollonian group which was defined in section \ref{whitehead}.

We call a quadruple $(a,b,c,d)$ a Descartes quadruple if it represents curvatures of four mutually tangent circles
(oriented so that their interiors are disjoint) in the plane.
By Descartes' theorem, any Descartes quadruple $(a,b,c,d)$ lies on the cone
$Q(x)=0$,
where
 $Q$ denotes the so-called Descartes
quadratic form
$$Q(a,b,c,d)=2(a^2+b^2+c^2+d^2)-{(a+b+c+d)}^2.
$$

The quadratic form $Q$ has signature $(3,1)$ and hence over the reals, the orthogonal group $\O_Q$ is isomorphic to
$\O(3,1)$, which is the isometry group of the hyperbolic $3$-space $\bH^3$.

We observe that if $(a,b,c,d)$ and $(a,b,c,d')$ are
Descartes quadruples, then $d'=-d+2(a+b+c)$ and hence $(a,b,c,d')=(a,b,c,d)S_4$
where
$$S_1=\begin{pmatrix}  -1&0&0&0 \\2&1&0&0\\
2&0&1&0\\ 2&0&0&1 \end{pmatrix},\quad
 S_2=\begin{pmatrix} 1&2&0&0\\
 0&-1& 0&0 \\
0&2&1&0\\ 0&2&0&1 \end{pmatrix}, $$
$$
 S_3=\begin{pmatrix} 1&0&2&0\\
 0&1&2&0\\ 0&0& -1 &0 \\
 0&0&2&1 \end{pmatrix}, \quad  S_4=\begin{pmatrix} 1&0&0&2\\
 0&1&0&2 \\ 0&0&1&2\\ 0&0&0&-1 \end{pmatrix}. $$
Now the integral Apollonian group $\mathcal A$ is generated by those four reflections $S_1,S_2,S_3,S_4$ in $\operatorname{GL}_4(\z)$ and  
one can check that $\mathcal A<\O_Q(\z)$. 

Fixing an integral Apollonian circle packing $\P$,
all Descartes quadruples associated to $\P$ is a single $\A$-orbit in the cone $Q=0$. Moreover if we choose a root quadruple
$v_{\P}$ from $\P$, which consists of curvatures of four largest mutually tangent circles,
any reduced word $w_n=v_{\P}S_{i_1}\cdots S_{i_n}$ with $S_{i_j}\in \{S_1,S_2,S_3,S_4\}$
is obtained from $w_{n-1}=v_{\P}S_{i_1}\cdots S_{i_{n-1}}$ by changing precisely one entry and this new entry is
the maximum entry of $w_n$, which is the curvature of a precisely one new circle added at the $n$-th generation \cite{G2}.
This gives us the translation of the circle counting problem for a {\it bounded} Apollonian packings as the orbital counting problem of an $\A$-orbit in a cone $Q=0$:
$$N_{T}(\P)=\#\{v\in v_{\P} \A: \|v\|_{\op{max}} \le T\}+3.$$

The integral Apollonian group $\A$ is isomorphic to the geometric Apollonian group $\A_\P$ (the subgroup generated by four inversions with respect to the dual circles of four mutually tangent circles in $\P$):
there exists an explicit isomorphism between the orthogonal group $\O_Q$ and $\text{M\"ob}(\hat{\mathbb C})$
which maps the integral Apollonian group $\A$ to the geometric Apollonian group $\A_\P$.
In particular, $\A$ is a subgroup $\O_Q(\z)$ which is of infinite index and Zariski dense in $\O_Q$. Such a subgroup
is called a thin group. Diophantine properties of an integral Apollonian packing is now reduced to the study of
Diophantine properties of an orbit of the thin group $\A$. Unlike orbits under an arithmetic subgroup (subgroups of $\O_Q(\z)$ of finite index)
which has a rich theory of automorphic forms and ergodic theory, the study of thin groups has begun very recently, but with a great success.
In particular, the recent developments in expanders  is one of key ingredients
in studying primes or almost primes in thin orbits (see \cite{Bo}).

\section{Expanders and Sieve}
All graphs will be assumed to be simple (no multiple edges and no loops) and connected in this section.
For a finite $k$-regular graph $X=X(V,E)$ with $V=\{v_1, \cdots, v_n\}$ the set of vertices and $E$ the set of edges, the adjacency matrix $A=(a_{ij})$ is defined by $a_{ij}=1$ if $\{v_i, v_j\}\in E$ and $a_{ij}=0$ otherwise.
Since $A$ is a symmetric real matrix, it has $n$ real eigenvalues: $\lambda_0(X) \ge \lambda_1(X)\ge \cdots \ge  \lambda_{n-1}(X)$.
As $X$ is simple and connected,  the largest eigenvalue $\lambda_0(X)$ is given by $k$ and has multiplicity one.

\begin{Def} A family of $k$-regular graphs $\{X_i\}$ with ( $\# X_i \to \infty$) is called an {\it expander} family if there exists an $\e_0>0$ such that
$$\sup_i \lambda_1(X_i) \le k-\e_0.$$\end{Def} 
Equivalently, $\{X_i\}$ is an expander family if there exists a uniform positive lower bound
for the Cheeger constant (or isoperimetric constant) 
$$h(X_i):=\min_{0<\# W\le \# X_i /2} \frac{\# \partial (W)}{\# W}$$ where 
$\partial (W)$ means the set of edges with exactly one vertex in $W$.
Note that the bigger the Cheeger constant is, the harder it is to break the graph into two pieces.
Intuitively speaking, an expander family is a family of sparse graphs (as the regularity $k$ is fixed) with high connectivity properties (uniform lower bound
for the Cheeger constants).

Although it was known that there has to be many expander families using probabilistic arguments due to Pinsker,
the first explicit construction of an expander family is due to Margulis in 1973 \cite{MaE} using the representation theory of a simple algebraic group and automorphic form theory.
We explain his construction below; strictly speaking, what we describe below is not exactly same as his original construction but
the idea of using the representation theory of an ambient algebraic group is the main point of his construction as well as in
the examples below.

Let $G$ be a connected simple non-compact real algebraic group defined over $\q$, with a fixed $\q$-embedding into $\SL_N$.
Let $G(\z):=G\cap \SL_N(\z)$ and $\G<G(\z)$ be a finitely generated subgroup. For each positive integer $q$,
the principal congruence subgroup $\G(q)$ of level $q$ is defined to be $\{\gamma\in \G: \gamma =e \;\mod q\}$.

Fix a finite symmetric generating subset $S$ for $\G$. Then $S$ generates the group $\G(q)\ba \G$ via the canonical projection.
We denote by $X_q:=\mathcal C(\G(q)\ba \G, S)$ the Cayley graph of the group $\G(q)\ba \G$ with respect to $S$, that is, vertices of $X_q$
are elements of $\G(q)\ba \G$ and two elements $g_1, g_2$ form an edge if $g_1=g_2 s$ for some $s\in S$.
Then $X_q$ is a connected $k$-regular graph for $k=\# S$.
Now a key observation due to Margulis is that if $\G$ is {\it of finite index} in $G(\z)$, or equivalently if $\G$ is a lattice in $G$,
then the following two properties are equivalent:
 for any $I\subset \mathbb N$,
\begin{enumerate}
\item The family $\{X_q:q\in I\}$ is an expander;
\item  The trivial representation $1_G$ is isolated 
in the sum $\oplus_{q\in I} L^2(\G(q)\ba G)$ in the Fell topology of the set of unitary representations of $G$.\end{enumerate}
 We won't give a precise definition of the Fell topology, but just say that the second property is equivalent to the following:
for a fixed compact generating subset $Q$ of $G$, there exists $\e>0$ (independent of $q\in I$)
 such that any unit vector $f\in L^2(\G(q)\ba G)$ satisfying
$\max_{q\in Q} \|q.f-f\| <\e$ is $G$-invariant, i.e., a constant.
Briefly speaking, it follows almost immediately from the definition of an expander family that the family $\{X_q\}$ is an expander if and only if the trivial representation $1_{\G}$ of $\G$ is isolated in 
the sum $\oplus_q L^2(\G(q)\ba \G)$. On the other hand,the induced representation of $1_\G$ from $\G$ to $G$ is $L^2(\G\ba G)$,
which contains the trivial representation $1_G$, if $\G$ is a lattice in $G$. Therefore, by the continuity of the induction process, the weak containment of
$1_\G$ in $\oplus_q L^2(\G(q)\ba \G)$ implies the weak-containment of $1_G$ in 
$\oplus_q L^2(\G(q)\ba G)$, which explains why (2) implies (1).

The isolation property of $1_G$ as in (2) holds for $G$; if the real rank of $G$ is at least $2$ or $G$ is a rank one group
of type $Sp(m,1)$ or $F_4^{-20}$, $G$ has the so-called Kazhdan's property (T) \cite{Ka}, which says that the trivial representation of $G$ is isolated in the 
whole unitary dual of $G$. When $G$ is isomorphic to $\SO(m,1)$ or $\SU(m,1)$ which do not have Kazhdan's property (T),
the isolation of the trivial representation is still true in the subset of all automorphic representations $L^2(\G(q)\ba G)$'s, due to the work
of Selberg, Burger-Sarnak \cite{BS} and Clozel \cite{Cl}. This latter property is referred as the phenomenon that
$G$ has property $\tau$ with respect to the congruence family $\{\G(q)\}$.

Therefore, we have:
\begin{Thm} \label{expo} If $\G$ is of finite index in $G(\z)$, then
the family $\{X_q=\mathcal C (\G(q)\ba \G, S) :q \in \N \}$ is an expander family.
\end{Thm}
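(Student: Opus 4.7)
The plan is to implement the two-step strategy sketched in the paragraphs preceding the statement. The first step translates the expander property for the Cayley graphs into a representation-theoretic isolation; the second step deduces that isolation from known spectral gap results for the $G$-action on automorphic $L^2$-spaces.

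First, I would observe that the adjacency operator $A_q$ of $X_q=\mathcal{C}(\Gamma(q)\backslash\Gamma,S)$ acts on $\ell^2(\Gamma(q)\backslash\Gamma)$ as $A_q = \sum_{s\in S}\rho_q(s)$, where $\rho_q$ denotes the right regular representation. The top eigenvalue $k=\#S$ is attained exactly on the constants, which span the trivial $\Gamma$-subrepresentation. Hence a uniform spectral gap $\sup_q \lambda_1(X_q)\le k-\varepsilon_0$ is equivalent to the existence of $\varepsilon>0$ such that every unit vector $f$ in $\ell^2_0(\Gamma(q)\backslash\Gamma)$ (the orthogonal complement of the constants) satisfies $\max_{s\in S}\norm{\rho_q(s)f-f}\ge\varepsilon$, uniformly in $q$. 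In Fell-topology language, this says $1_\Gamma$ is not weakly contained in $\bigoplus_q \ell^2_0(\Gamma(q)\backslash\Gamma)$.

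Next, since $\Gamma$ has finite index in $G(\mathbb{Z})$ it is itself a lattice in $G$. Induction of unitary representations from $\Gamma$ to $G$ is continuous for the Fell topology, and
\[
\operatorname{Ind}_\Gamma^G\bigl(\ell^2_0(\Gamma(q)\backslash\Gamma)\bigr)\cong L^2_0(\Gamma(q)\backslash G),
\]
where the subscript $0$ denotes the orthogonal complement of $G$-fixed vectors. Consequently, weak containment of $1_\Gamma$ in $\bigoplus_q \ell^2_0(\Gamma(q)\backslash\Gamma)$ would force weak containment of $1_G$ in $\bigoplus_q L^2_0(\Gamma(q)\backslash G)$. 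It therefore suffices to establish a uniform spectral gap for the $G$-action on this automorphic family.

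Finally, I would invoke the appropriate automorphic input according to the type of $G$. If $G$ has real rank at least two, or is of type $\operatorname{Sp}(m,1)$ or $F_4^{-20}$, Kazhdan's property (T) isolates $1_G$ in the entire unitary dual of $G$, so the required uniform gap is automatic. In the remaining rank-one cases, where $G$ is locally isomorphic to $\operatorname{SO}(n,1)$ or $\operatorname{SU}(n,1)$, property (T) fails, and I would appeal to property $\tau$ for the congruence family $\{\Gamma(q)\}$, as established by Selberg, Burger--Sarnak and Clozel. The main obstacle is precisely this rank-one case: the isolation of $1_G$ in $\bigoplus_q L^2_0(\Gamma(q)\backslash G)$ is a deep automorphic statement generalizing Selberg's $\tfrac{3}{16}$ theorem, rather than a soft representation-theoretic consequence. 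The Cayley-graph-to-regular-representation translation, and the use of continuity of induction, are by contrast formal once the framework is in place.
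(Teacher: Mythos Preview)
Your proposal is correct and follows exactly the strategy the paper lays out in the paragraphs preceding the theorem: translate the expander condition into isolation of $1_\Gamma$ in $\bigoplus_q \ell^2(\Gamma(q)\backslash\Gamma)$, use continuity of induction (available because $\Gamma$ is a lattice) to pass to isolation of $1_G$ in $\bigoplus_q L^2(\Gamma(q)\backslash G)$, and then invoke Kazhdan's property (T) or, in the $\SO(m,1)$/$\SU(m,1)$ cases, property $\tau$ via Selberg, Burger--Sarnak, and Clozel. The paper offers no additional argument beyond this sketch, so there is nothing to compare.
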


In the case when $\G$ is of infinite index, the trivial representation is not contained in $L^2(\G(q)\ba G)$, as the constant function is
not square-integrable, and the above correspondence cannot be used, and deciding
whether $X_q$ forms an expander or not for a thin group was a longstanding open problem. For instance, if
$S_k$ consists of four matrices $\begin{pmatrix}
1 &\pm k\\ 0 & 1\end{pmatrix}$ and $\begin{pmatrix}
1 &0 \\ \pm k  & 1\end{pmatrix}$,  then the group $\G_k$ generated by $S_k$
has finite index only for $k=1,2$  and hence
we know the family $\{X_q(k) = \mathcal C (\G_k(q)\ba \G_k, S_k)\}$ forms an expander for $k=1,2$ by Theorem \ref{expo} but the subgroup $\G_3$  generated by
$S_3$ has infinite index in $\SL_2(\z)$ and  it was not known whether
$\{X_q(3)\}$ is an expander family until the work of Bourgain, Gamburd and Sarnak \cite{BGS}.

\begin{Thm}[Bourgain-Gamburd-Sarnak \cite{BGS}, Salehi-Golsefidy-Varju \cite{SV}] \label{exp} Let $\G<G(\z)$ be a thin subgroup, i.e., $\G$ is Zariski dense in $G$.
Let $S$ be a finite symmetric generating subset of $\G$. Then
$\{\mathcal C(\G(q)\ba \G, S): \text{q: square-free}\}$ forms an expander family.
\end{Thm}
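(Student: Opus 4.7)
The plan is to run the Bourgain-Gamburd machine. The expander property amounts to a uniform bound $\lambda_1(X_q)\le k-\e_0$ with $k=\#S$, which is the first non-trivial eigenvalue of the adjacency operator acting on $\ell^2(\G(q)\ba\G)$. I would first establish such a gap at prime levels $q=p$ and then pass to square-free $q=p_1\cdots p_r$: by strong approximation (Matthews-Vaserstein-Weisfeiler, Nori, Weisfeiler), Zariski density of $\G$ in $\bG$ forces $\pi_q(\G)=\prod_i \pi_{p_i}(\G)$ for all $q$ whose prime factors are sufficiently large, and a uniform spectral gap at each prime level propagates to a gap at the product by a Goursat-lemma / tensor-decomposition argument (carried out in BGS for $\SL_2$ and by Varj\'u in general).

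For a fixed prime $p$, let $\mu$ be the uniform probability measure on $S$ and $\mu^{(\ell)}$ its $\ell$-fold convolution pushed forward to the finite group $G_p:=\pi_p(\G)$. The spectral gap at level $p$ is equivalent to $\|\mu^{(\ell)}\|_\infty \ll |G_p|^{-1+o(1)}$ at some scale $\ell \asymp \log |G_p|$, i.e.\ logarithmic mixing of the random walk. The method combines three ingredients:
\begin{itemize}
\item[(i)] \emph{Quasi-randomness}: the minimal dimension of a non-trivial complex irreducible representation of $G_p$ is $\gg p^{\kappa}$ for some $\kappa>0$ (Landazuri-Seitz bounds for finite simple groups of Lie type, applied once strong approximation identifies $G_p$ with a bounded-index subgroup of $\bG(\bz/p\bz)$).
\item[(ii)] \emph{Product theorem}: for every generating $A\subset G_p$ with $|A|\le |G_p|^{1-\eta}$ one has $|A\cdot A\cdot A|\ge |A|^{1+\tau(\eta)}$ (Helfgott for $\SL_2, \SL_3$; Pyber-Szab\'o and Breuillard-Green-Tao in general).
\item[(iii)] \emph{Non-concentration}: at some scale $\ell_0 \asymp \log p$, the measure $\mu^{(\ell_0)}$ puts mass $\le p^{-\tau}$ on any proper algebraic subgroup of $\bG$ (reduced mod $p$) of bounded complexity, uniformly in $p$.
\end{itemize}
Given these, the $\ell^2$-flattening argument iteratively applies the product theorem to $\mu^{(\ell)}\ast \mu^{(\ell)}$ to collapse $\|\mu^{(\ell)}\|_2$ to essentially $|G_p|^{-1/2}$ in $O(\log p)$ steps; quasi-randomness (i) then converts this $\ell^2$-bound into the required $\ell^\infty$-bound via Plancherel together with the high multiplicity of irreducible representations of $G_p$.

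The main obstacle, and the heart of the theorem in the thin regime, is ingredient (iii). In the lattice case (Theorem \ref{expo}) it would come from property $\tau$ via automorphic input (Selberg-Burger-Sarnak-Clozel), but for thin $\G$ no such device is available; non-concentration must be proved directly from the dynamics of the free semigroup generated by $S$ on $\bG$. Zariski density of $\G$ enters here decisively: an effective Tits alternative together with an escape-from-subvarieties estimate shows that the probability that a length-$\ell$ random word in $S$ lies in any proper subvariety of $\bG$ of degree $\le D$ decays like $\exp(-c\ell/D^{C})$, uniformly in bounded $D$. Reducing modulo $p$ and combining with the Breuillard-Green-Tao / Pyber-Szab\'o classification of approximate subgroups (to rule out concentration on cosets of proper algebraic subgroups), one obtains (iii) with constants independent of $p$. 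The Salehi Golsefidy-Varj\'u contribution is to establish exactly this uniform escape in the generality of arbitrary semisimple $\bG$ and arbitrary Zariski-dense thin $\G$; combined with (i), (ii), the flattening step, and the square-free glue above, the expander family assertion follows.
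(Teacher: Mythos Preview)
Your proposal is a correct and well-organized outline of the Bourgain--Gamburd machine as extended by Salehi-Golsefidy--Varj\'u: strong approximation to identify $\G(q)\backslash\G$, quasi-randomness via Landazuri--Seitz, the product theorem of Helfgott/Pyber--Szab\'o/Breuillard--Green--Tao, non-concentration (escape from subvarieties) as the genuinely new ingredient in the thin setting, $\ell^2$-flattening, and Varj\'u's glue from primes to square-free moduli. This is indeed how the theorem is proved in the cited references.

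However, the paper itself does \emph{not} prove this theorem. It is a survey, and Theorem~\ref{exp} is simply quoted from \cite{BGS} and \cite{SV}. The only remark the paper makes about the proof is the single sentence following the statement: ``The proof of Theorem~\ref{exp} is based on additive combinatorics and Helfgott's work on approximate subgroups \cite{He} and generalizations made by Pyber--Szab\'o \cite{PS} and Breuillard--Green--Tao \cite{BGT}.'' Your outline is entirely consistent with that sentence and in fact unpacks it, but there is no ``paper's own proof'' to compare against beyond this pointer to the literature.
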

If $G$ is simply connected in addition, the strong approximation theorem of Matthews, Vaserstein and Weisfeiler \cite{MVW}
says that there is a finite set $\mathcal B$ of primes such that for all $q$ with no prime factors from $\mathcal B$,
$\G(q)\ba \G$ is isomorphic to the finite group $G(\mathbb Z/ q\mathbb Z)$ via the canonical projection $\G \to G(\z/q\z)$; hence the corresponding
 Cayley graph $\mathcal C (G(\mathbb Z/ q\mathbb Z), S)$ is connected.
Similarly, Theorem \ref{exp} says that the Cayley graphs $\mathcal C (G(\mathbb Z/ q\mathbb Z), S)$ with $q$ square-free and with no factors from $\mathcal B$
are highly connected, forming an expander family; called the {\it super-strong approximation Theorem}.

The proof of Theorem \ref{exp} is based on additive combinatorics and Helfgott's work on approximate subgroups \cite{He} and generalizations made by Pyber-Szabo \cite{PS} and
Breuillard-Green-Tao \cite{BGT} (see also \cite{Gr}).

The study of expanders has many surprising applications in various areas of mathematics (see \cite{Sa1}). We describe its application in sieves, i.e., in the study of primes. For motivation, we begin by considering an integral polynomial $f\in \z[x]$.
The following is a basic question:
\begin{center} {\it Are there infinitely many integers \text{$n\in \z$} such that \text{$f(n)$} is prime}?\end{center}

\begin{itemize}
\item If $f(x)=x$, the answer is yes; this is the infinitude of primes. 
\item
If $f(x)=ax+b$, the answer is yes if and only if $a,b$ are co-prime. This is Dirichlet's theorem.
\item If $f(x)=x(x+2)$, then there are no primes in $f(\z)$ for an obvious reason. On the other hand, Twin prime conjecture says that there are infinitely many $n$'s such that $f(n)$ is a product of at most $2$ primes. Indeed, Brun introduced what is called Brun's combinatorial sieve
to attack this type of question, and proved that 
there are infinitely many $n$'s such that $f(n)=n(n+2)$ is $20$-almost prime, i.e., a product of at most $20$ primes. His approach was improved by Chen \cite{Ch}
who was able to show such a tantalizing theorem that there are 
infinitely many $n$'s such that $f(n)=n(n+2)$ is $3$-almost prime.
\end{itemize}

In view of the last example, the correct question is formulated as follows: 
\begin{center} {\it Is there $R<\infty $ such that the set of \text{$n\in \z$} such that
 \text{$f(n)$} is R-almost prime is infinite?}\end{center}

Bourgain, Gamburd and Sarnak \cite{BGS2} made a beautiful observation that Brun's combinatorial sieve can also  be implemented
for orbits of $\G$ on an affine space via affine linear transformations
   and the expander property of the Cayley graphs of the congruence quotients of $\G$ provides a crucial input needed in executing the sieve machine.

Continuing our setup  that
$G\subset \SL_N$ and $\G <G(\z)$, we consider the orbit $\mathcal O=v_0\G\subset \z^N$ for a non-zero $v_0\in \z^N$ and let $f\in \q[x_1, \cdots, x_N]$ such that $f(\mathcal O)\subset \z$.
\begin{Thm}[Bourgain-Gamburd-Sarnak \cite{BGS2}, Sarnak-Salehi-Golsefidy \cite{SS}] There exists $R=R(\mathcal O, f)\ge 1$ such that
the set of vectors $v\in \mathcal O$ such that $f(v)$ is $R$-almost prime is Zariski dense in $v_0G$.
\end{Thm}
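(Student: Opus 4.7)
The plan is to execute Brun's combinatorial sieve on the orbit $\mathcal{O}$, using the super-strong approximation theorem (Theorem~\ref{exp}) as the level-of-distribution input, and then to promote the resulting infinitude of almost-primes to Zariski density by a subvariety-removal argument.

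First I would set up an archimedean counting. Fix a norm on $\mathbb{R}^N$ and let $B_X$ denote its ball of radius $X$. Because $\Gamma$ is Zariski dense in $G$, the orbital-counting technology reviewed in Section~\ref{bmss} (BMS mixing together with equidistribution of translates, or its higher-rank analogues) yields an asymptotic $N(X) := \#(\mathcal{O}\cap B_X) \sim c\, X^\tau$ for an explicit $\tau > 0$, with a well-rounded smoothing of $B_X$ that I will need for error control. For each square-free $q$ coprime to a fixed finite set $\mathcal{B}$ of exceptional primes, strong approximation (Matthews-Vaserstein-Weisfeiler) identifies $\Gamma(q)\backslash \Gamma$ with $G(\mathbb{Z}/q\mathbb{Z})$, so that $\mathcal{O}\bmod q$ is precisely the $G(\mathbb{Z}/q\mathbb{Z})$-orbit of $v_0$. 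The sieve axiom I need is
\[ N_q(X) := \#\{v \in \mathcal{O}\cap B_X : f(v) \equiv 0 \pmod q\} = \beta(q)\, N(X) + r(X,q), \]
where $\beta(q)$ is multiplicative, given by the fraction of this finite orbit on which $f$ vanishes modulo $q$, with $\sum_{q \le X^\eta,\, q \text{ sqfree}} |r(X,q)| \ll X^{\tau-\eta'}$ for some $\eta, \eta' > 0$.

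The heart of the matter, and the main obstacle, is this level of distribution. The uniform spectral gap for the Cayley graphs $\mathcal{C}(\Gamma(q)\backslash\Gamma, S)$ in Theorem~\ref{exp} translates, via a standard $\ell^2$-to-$\ell^\infty$ expansion of the congruence transfer operator, into a power-saving equidistribution of $\Gamma$-orbits on $G(\mathbb{Z}/q\mathbb{Z})$ at a rate \emph{uniform in} $q$. Marrying this congruence equidistribution with the archimedean count (using the well-rounded smoothing from the first step in a dyadic decomposition) produces an error of the shape $r(X,q) \ll q^C X^{\tau - \eta_0}$ for absolute constants $C, \eta_0 > 0$, which is more than enough after summation over square-free $q$ up to a small power of $X$. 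Feeding these axioms into Brun's combinatorial sieve (as in \cite{BGS2}) produces $R = R(\mathcal{O}, f)$ and $c' > 0$ with
\[ \#\{v \in \mathcal{O}\cap B_X : f(v) \text{ is $R$-almost prime}\} \;\gg\; c'\, \frac{N(X)}{(\log X)^{\operatorname{deg} f}}. \]

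Finally, to upgrade infinitude to Zariski density in $v_0 G$, I would fix any proper subvariety $W \subsetneq v_0 G$ and rerun the entire argument with $\mathcal{O}$ replaced by $\mathcal{O} \smallsetminus W(\mathbb{Z})$. Zariski density of $\Gamma$ forces $\#(\mathcal{O} \cap W(\mathbb{Z}) \cap B_X)$ to be of strictly smaller order than $N(X)$, while the sieve axioms are inherited unchanged from $\mathcal{O}$, so the lower bound above still dominates; hence $R$-almost-prime values of $f$ must occur off every such $W$, which is Zariski density. The genuinely hard input is the super-strong approximation: for thin $\Gamma$ there is no automorphic $L^2$ spectral gap on $\Gamma \backslash G$ to play the role that property~$(\tau)$ plays for lattices (via Selberg-Burger-Sarnak-Clozel), and it is precisely the expansion result of Bourgain-Gamburd-Sarnak / Salehi-Golsefidy-Varju, fed by the product theorems of Helfgott, Pyber-Szab\'o and Breuillard-Green-Tao, that makes the sieve run.
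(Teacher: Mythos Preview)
Your sieve architecture is right, but the counting step has a real gap. You invoke ``the orbital-counting technology reviewed in Section~\ref{bmss} (BMS mixing \ldots\ or its higher-rank analogues)'' to produce an archimedean asymptotic $N(X)\sim cX^\tau$ together with a power-saving error term uniform in $q$. That machinery exists only for geometrically finite $\Gamma$ in a rank-one group, and even there only under a lower bound on the critical exponent---exactly the extra hypotheses of Theorem~\ref{efc}. For an arbitrary Zariski-dense thin subgroup of a general simple $G$ (which is the setting of the theorem you are proving) there is no such result, and no ``higher-rank analogue'' of the BMS/Burger--Roblin machinery is available; archimedean orbit counts with power-saving error for thin groups outside rank one are in fact open.

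The proofs in the cited papers sidestep this entirely by replacing the norm ball $B_X$ with the \emph{combinatorial} word-length ball $B_\ell=\{v_0 s_{i_1}\cdots s_{i_\ell}:s_{i_j}\in S\}$ in the orbit. Now the main-term count is trivial, and the expander property of Theorem~\ref{exp} is \emph{itself} the statement that the image of the word-ball in $\Gamma(q)\backslash\Gamma$ equidistributes with an error decaying like $(1-\epsilon_0)^\ell$ uniformly in square-free $q$; this gives the level-of-distribution axiom directly, with no mixing or equidistribution-of-translates input at all. The Zariski-density upgrade is handled in the same combinatorial framework: an escape-from-subvarieties argument (again driven by the spectral gap) shows that the fraction of $B_\ell$ landing on any fixed proper $W\subsetneq v_0G$ decays exponentially in $\ell$, so the sieve lower bound survives the excision of $W$. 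Your subvariety-removal idea is correct in spirit, but like the main-term count, the estimate $\#(\mathcal O\cap W\cap B_X)=o(N(X))$ cannot be justified archimedeanly for general thin $\Gamma$. Note that the paper's own exposition of the sieve (after Theorem~\ref{sie}) does use archimedean balls $\mathcal B_T$, but it is explicitly in the service of Theorem~\ref{efc}, where the rank-one and critical-exponent hypotheses make Theorem~\ref{lastt} available; it is not offered as a proof of the general statement you were asked about.
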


We ask the following finer question:
\begin{center}\it{ Describe the distribution of the set \text{$\{v\in \mathcal O: f(v) \text{ is $R$-almost prime}\}$}
in the variety $v_0G$. }\end{center}
In other words, is the set in concern focused in certain directions in $\mathcal O $ or {\it equi-distributed} in $\mathcal O$?
This is a very challenging question at least in the same generality as the above theorem, but when $G$
is the orthogonal group of the Descartes quadratic form $Q$, $Q(v_0)=0$, and $\G$ is the integral Apollonian group, 
we are able to give more or less a satisfactory answer by \cite{KO} and \cite{LOA}. More generally, we have the following:
Let $F$ be an integral quadratic form of signature $(n,1)$ and let $\G<\SO_F(\z)$ be a geometrically finite Zariski dense
subgroup. Suppose that the critical exponent $\delta$ of $\G$ is bigger than $(n-1)/2$ if $n=2,3$ and bigger than $n-2$ if $n\ge 4$.
Let $v_0\in \z^{n+1}$ be non-zero and $\mathcal O:=v_0\G$. We also assume that the skinning measure
associated to $v_0$ and $\G$ is finite. 
\begin{Thm} [Mohammadi-O. \cite{MO2}]\label{efc} Let $f=f_1\cdots f_k\in \q[x_1, \cdots, x_{n+1}]$ be a polynomial with each
$f_i$ absolutely irreducible and distinct with rational coefficients and $f_i(\mathcal O)\subset \z$. Then we construct an explicit locally finite measure $\mathcal M$
on the variety $v_0G$, depending on $\G$
such that for any family $\mathcal B_T$ of subsets in $v_0G$  which is {\it effectively well-rounded} with respect to $\mathcal M$, we have
\begin{description}
\item[(1) Upper bound] $\#\{v\in \mathcal O\cap \mathcal B_T: \text{each $f_i(v)$ is prime}\}\ll \frac{\mathcal M(\B_T)}{(\log \mathcal M(\B_T))^k};$
\item[(2) Lower bound] Assuming further that $\max_{x\in \mathcal B_T} \|x\| \ll \mathcal M(B_T)^\beta$
for some $\beta>0$,
there exists $R=R(\mathcal O, f)>1$ such that
$$\#\{v\in \mathcal O\cap \mathcal B_T:\text{$f(v)$ is $R$-almost prime} \}\gg  
\frac{\mathcal M(\B_T)}{(\log \mathcal M(\B_T))^k}.$$
\end{description}
\end{Thm}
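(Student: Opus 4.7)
The proof implements the affine sieve of Bourgain-Gamburd-Sarnak in the geometrically finite setting, with the effective counting result behind Theorem \ref{ifc} as archimedean input and the super-strong approximation (Theorem \ref{exp}) as the $p$-adic input. I would start by framing both claims as a sieve problem on the orbit $\mathcal O=v_0\G$. For each square-free $q$, define the multiplicative density
\[
\beta(q) \;=\; \frac{q}{[\G:\G(q)]}\cdot\#\{w\in \mathcal O/\G(q)\,:\, f(w)\equiv 0\pmod q\},
\]
so that $\beta(q)/q$ is the proportion of $\mathcal O$ whose $f$-value is divisible by $q$. A Lang-Weil count together with the distinctness and absolute irreducibility of the $f_i$ gives $\beta(p)=k+O(p^{-1/2})$ for almost all $p$, which places us in a $k$-dimensional sieve. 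The upper bound will then come from Selberg's $\Lambda^2$ sieve and the lower bound from a Brun-Chen weighted combinatorial sieve.

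The analytic heart of the argument is an effective equidistribution at congruence levels: there should exist absolute $A,\eta>0$ such that for every square-free $q$,
\[
\#\{v\in \mathcal O\cap \mathcal B_T : f(v)\equiv 0 \pmod q\} \;=\; \frac{\beta(q)}{q}\cdot\#(\mathcal O\cap\mathcal B_T) \;+\; O\!\left(q^{A}\,\mathcal M(\mathcal B_T)^{1-\eta}\right).
\]
To prove this I would decompose the congruence condition $v\equiv w \pmod q$ into cosets of $\G(q)$ in $\G$ and apply the effective form of Theorem \ref{ifc} to each, so that each fiber count becomes the equidistribution of a piece of $\G(q)Ha_t$ on $\G(q)\ba G$ with respect to the appropriate BMS-type measure; the case $q=1$ recovers $\#(\mathcal O\cap\mathcal B_T)\sim c\cdot\mathcal M(\mathcal B_T)$. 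The polynomial dependence on $q$ in the error requires the frame-flow mixing rate on $\G(q)\ba G$ to be uniform in $q$, that is, a uniform spectral gap for the family $\{\G(q)\}$. This uniform gap is obtained by feeding the discrete expander bound for $\mathcal C(\G(q)\ba\G,S)$ produced by Theorem \ref{exp} into the spectral theory of the Laplacian on $\G(q)\ba\Hn$; the hypothesis $\delta>(n-1)/2$ (resp.\ $\delta>n-2$ for $n\ge 4$) guarantees that the Patterson-Sullivan density produces $L^2$ matrix coefficients, which is what allows the discrete combinatorial gap to be transferred into an analytic $L^2$-gap at the base $\delta(n-1-\delta)$ and hence an isolated bottom eigenvalue in every $\G(q)\ba\Hn$.

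With the effective distribution law in hand, the rest is sieve bookkeeping. Applying Selberg's sieve at level $z=\mathcal M(\mathcal B_T)^{\eta/(2A)}$ gives
\[
S(\mathcal O\cap\mathcal B_T,z) \;\ll\; \frac{\mathcal M(\mathcal B_T)}{(\log \mathcal M(\mathcal B_T))^k},
\]
and since $\#\{v\in\mathcal O\cap\mathcal B_T:\text{each }f_i(v)\text{ prime}\}$ differs from $S(\mathcal O\cap\mathcal B_T,z)$ by at most $O(z)$, the upper bound follows. For the lower bound, the hypothesis $\max_{x\in\mathcal B_T}\|x\|\ll\mathcal M(\mathcal B_T)^\beta$ yields $|f(v)|\ll \mathcal M(\mathcal B_T)^{\beta\deg f}$, so any genuine prime factor of $f(v)$ lies below this threshold. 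Choosing $R$ sufficiently large in terms of $\beta,\deg f,k,\eta,A$, the weighted combinatorial sieve at an appropriate level $z'=\mathcal M(\mathcal B_T)^\alpha$ produces a lower bound of the same order $\mathcal M(\mathcal B_T)/(\log\mathcal M(\mathcal B_T))^k$.

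The principal obstacle is the $q$-uniformity of the spectral gap in the effective equidistribution. The qualitative equidistribution is already contained in Theorem \ref{ifc}, and the local density calculation giving $\beta(p)=k+O(p^{-1/2})$ is routine. What is substantive is the transfer from the combinatorial expansion of Theorem \ref{exp} to a uniform-in-$q$ resolvent gap on $\G(q)\ba\Hn$, and the careful book-keeping to ensure the resulting error in the congruence count is $O(q^A\mathcal M(\mathcal B_T)^{1-\eta})$ with both $A$ and $\eta$ absolute. This is precisely where the restriction on $\delta$ enters, and once this input is available the sieve arguments proceed in the classical fashion.
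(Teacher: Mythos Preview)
Your proposal is correct and follows essentially the same route as the paper's sketch: set up the affine sieve on $\mathcal O=v_0\G$, feed in the uniform-in-$q$ effective orbit count (the paper's Theorem~\ref{lastt}) obtained from the uniform spectral gap transferred from the expander Theorem~\ref{exp}, and read off both bounds from the sieve. The only cosmetic differences are that the paper packages both bounds through a single Brun combinatorial sieve (Theorem~\ref{sie}) rather than splitting into Selberg-upper/weighted-Brun-lower, and it explicitly passes to the spin cover of $G$ to secure strong approximation before invoking the congruence counting.
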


The terminology of $\B_T$ being effectively well-rounded with respect to $\mathcal M$ means that there exists $p>0$ such that
for all small $\e>0$ and for all $T\gg 1$,
the  $\mathcal M$-measure
 of the $\e$-neighborhood of  the boundary of $\B_T$ is at most of order $O(\e^p\mathcal M(\B_T))$ with the implied constant independent of $\e$ and $T$.
  For instance, the norm balls $\{v\in v_0G: \|v\|\le T\}$ and many sectors are effectively well-rounded (cf. \cite{MO2}).

When $\G$ is of finite index, $\mathcal M$ is just a $G$-invariant measure on $v_0G$ and
 this theorem was proved earlier by Nevo-Sarnak \cite{NS} and  Gorodnik-Nevo \cite{GN}.

If $Q$ is the Descartes quadratic form, $\mathcal A$ is the integral Apollonian group, and
$\mathcal B_T=\{v\in \br^{4}: Q(v)=0, \|v\|_{\max}\le T\}$ is the max-norm ball,
then for any primitive integral Apollonian packing $\P$,
the number of prime circles in $\P$ of curvature at most $T$ is bounded  by
$$\sum_{i=1}^4 \#\{v\in v_{\P}\mathcal A\cap \B_T, f(v):=v_i \text{ prime}\}$$
which is  bounded by $\frac{\mathcal M(\B_T)}{\log (\mathcal M(\B_T))}$ by Theorem \ref{efc}. 
Since we have $\mathcal M(\B_T) =c \cdot T^\alpha +O(T^{\alpha-\eta})$ where $\alpha=1.305...$ is the critical exponent of $\mathcal A$,
this gives an upper bound $T^\alpha/\log T$ for the number of prime circles of curvature at most $T$, as stated in Theorem \ref{primek}.
The upper bound for twin prime circle count can be done similarly with $f(v)=v_{i}v_j$.

\newcommand{\cA}{\mathsf A}\newcommand{\X}{\mathcal X}\renewcommand{\deg}{\op{deg}}
\newcommand{\M}{\mathcal M}
\renewcommand{\O}{\mathcal O}

Here are a few words on
Brun's combinatorial sieve and its use  in Theorem \ref{efc}.
 Let $\cA=\{a_m\}$ be a sequence of non-negative numbers and let $B$ be a finite set of primes.
For $z >1$,
let $P_z=\prod_{p\notin B, p<z}p$ and $S(\cA,P_z):=\sum_{(m,P_z)=1} a_m .$
To estimate $S_z:=S(\cA,P_z)$, we need to understand how $\cA$ is distributed along arithmetic progressions.
For $q$ square-free, define
$$
\cA_q:=\{a_m\in\cA:n\equiv0(q)\}
$$
and set
$
|\cA_q|:=\sum_{m\equiv0(q)}a_m.
$

We use the following combinatorial sieve  (see \cite[Theorem 7.4]{HR}):
\begin{thm}\label{sie} \begin{itemize}
\item[$(A_1)$] For $q$ square-free with no factors in $B$,
suppose that $$|\cA_q|=g(q) \X + r_q(\cA)$$
where $g$ is a function on square-free integers with $0\le g(p)<1$,
 $g$ is multiplicative outside $B$, i.e.,
$g(d_1d_2)=g(d_1)g(d_2)$ if $d_1$ and $d_2$ are square-free integers  with $(d_1, d_2)=1$ and $(d_1 d_2, B)=1$,
and for some $c_1>0$, $g(p)<1-1/c_1$ for all prime $p\notin B$.
\item[$(A_2)$]  $\cA$ has level distribution $D$, in the sense that for some $\e>0$ and $C_\e>0$,
$$\sum_{q<D}|r_q(\cA)|\le  C_\e \X^{1-\e}.$$
\item[$(A_3)$]  $\cA$ has sieve dimension $k$ in the sense that there exists $c_2>0$ such that for all $2\le w\le z$,
$$-c_2\le \sum_{(p,B)=1, w\le p\le z} g(p)\log p -r \log \frac{z}{w} \le c_2 .$$
Then for $s>9r$, $z=D^{1/s}$ and $\X$ large enough,
$$S(\cA,P_z)\asymp \frac{\X}{(\log \X)^k} .$$
           \end{itemize}
\end{thm}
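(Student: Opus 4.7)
The plan is to implement Brun's combinatorial sieve: realize $S(\cA, P_z)$ as a truncated inclusion--exclusion and extract the main term from the Euler product $W(z) := \prod_{p<z,\ p\notin B}(1-g(p))$. To begin, M\"obius inversion gives the exact identity
\[
S(\cA, P_z) \;=\; \sum_m a_m \sum_{d\mid (m,P_z)} \mu(d) \;=\; \sum_{d\mid P_z}\mu(d)\,|\cA_d|,
\]
and substituting $(A_1)$ yields $S(\cA,P_z) = \X\cdot W(z) + \sum_{d\mid P_z}\mu(d)\,r_d(\cA)$. The error contains on the order of $2^{\pi(z)}$ summands, so the triangle inequality is hopeless against it; the level-of-distribution hypothesis $(A_2)$ only controls the remainders $r_d$ for $d<D$.

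The central step, and the main obstacle, is Brun's truncation. One constructs combinatorial weights $\lambda_d^\pm$ supported on square-free $d\le D$ dividing $P_z$, with $|\lambda_d^\pm|\le 1$, satisfying the pointwise sandwich
\[
\sum_{d\mid n}\lambda_d^- \;\le\; \mathbf{1}_{n=1} \;\le\; \sum_{d\mid n}\lambda_d^+ \qquad \text{for every } n\mid P_z.
\]
The standard construction truncates $\mu$ by retaining only squarefree $d=p_1\cdots p_r$ with $p_1>\cdots>p_r$ whose partial products obey a size constraint calibrated to $D$ (the Jurkat--Richert or $\beta$-sieve setup). Setting $S^\pm(\cA,P_z) := \sum_d \lambda_d^\pm\,|\cA_d|$ sandwiches $S^-\le S(\cA,P_z)\le S^+$, and expanding via $(A_1)$,
\[
S^\pm = \X\cdot V^\pm(z) + R^\pm,\qquad V^\pm(z) := \sum_{\substack{d\mid P_z \\ d\le D}} \lambda_d^\pm\, g(d),
\]
with $|R^\pm|\le \sum_{d<D,\,d\mid P_z}|r_d(\cA)|\ll \X^{1-\e}$ by $(A_2)$. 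A Buchstab-style recursive identity applied to $V^\pm$, exploiting the bound $g(p)\le 1-1/c_1$ from $(A_1)$, then yields $V^\pm(z) = W(z)\bigl(1+O(e^{-s/2})\bigr)$ where $s=\log D/\log z$, provided $s$ exceeds a constant multiple of the sieve dimension; the hypothesis $s>9k$, i.e.\ $z=D^{1/s}$, is a comfortable form of this fundamental-lemma estimate.

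Finally, hypothesis $(A_3)$ is exactly the input needed to convert $W(z)$ into a power of $\log z$: partial summation against $\sum_{w\le p\le z,\,p\notin B} g(p)\log p$, combined with the uniform bound $g(p)\le 1-1/c_1$ and a standard Mertens-type argument, gives
\[
W(z) \;=\; \prod_{p<z,\,p\notin B}(1-g(p)) \;\asymp\; (\log z)^{-k}.
\]
Since $z=D^{1/s}$ and $\X$ is large, $(\log z)^{-k}\asymp (\log \X)^{-k}$, the truncation factor $1+O(e^{-s/2})$ is bounded above and below by positive constants, and the remainder $\X^{1-\e}$ is absorbed by the main term. Sandwiching $S(\cA,P_z)$ between $S^-(\cA,P_z)$ and $S^+(\cA,P_z)$ yields $S(\cA,P_z)\asymp \X/(\log\X)^k$, as claimed. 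The most delicate point throughout is the combinatorial design and verification of the $\lambda_d^\pm$ together with the quantitative fundamental lemma; everything else is bookkeeping.
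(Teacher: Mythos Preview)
The paper does not prove this theorem at all; it is quoted verbatim as a black box from Halberstam--Richert \cite[Theorem~7.4]{HR} and then applied. So there is no ``paper's own proof'' to compare your argument against.

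That said, your outline is a faithful sketch of the standard Brun--type combinatorial sieve as developed in Halberstam--Richert: M\"obius inversion, truncation to sieve weights $\lambda_d^\pm$ supported on $d\le D$, the fundamental lemma giving $V^\pm(z)=W(z)(1+O(e^{-cs}))$ for $s$ large relative to the sieve dimension, and the Mertens-type evaluation $W(z)\asymp(\log z)^{-k}$ from axiom~$(A_3)$. Two small points worth flagging. First, the statement as printed has a typo, mixing $k$ and $r$ for the sieve dimension; you silently normalized to $k$, which is the intended reading. Second, your final step $(\log z)^{-k}\asymp(\log\X)^{-k}$ presumes $\log D\asymp\log\X$, i.e.\ that the level $D$ is a positive power of $\X$; this is indeed how the theorem is used in the paper (and is implicit in $(A_2)$ as typically applied), but strictly speaking the stated hypotheses do not force it, so you should make that assumption explicit. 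Beyond these, the genuinely hard content---the construction and verification of the $\lambda_d^\pm$ and the quantitative fundamental lemma---is exactly where you say it is, and is what the citation to \cite{HR} is covering.
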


For our orbit $\O=v_0\G$ and $f$ as in Theorem \ref{efc},  we set
$$ a_m(T):=\#\{x\in  \mathcal O\cap \B_T: f(x)=m\};$$
$$ \G_{v_0}(q):=\{\gamma\in \G: v_0\gamma \equiv v_0 \; (q)\},$$
$$|\cA(T)|:=\sum_m a_m(T)=\# \mathcal O\cap \B_T;$$
$$ |\cA_q(T)|:=\sum_{m\equiv 0 (q)} a_m(T)= \#\{x\in  \mathcal O\cap \B_T: f(x)\equiv 0 \;(q)\}.$$

Suppose we can verify the sieve axioms for these sequences $\cA_q(T)$ and $z$ of order $T^\eta$. 
Observe that if $(f(v)=m, P_z)=1$, then all prime factors of $m$
have to be at least of order $z=T^\eta$. It follows that if $f(v)=m$ has $R$ prime factors, then
$T^{\eta R}\ll m\ll T^{\text{degree}(f)}$, and hence $R\ll (\text{degree f})/\eta$.
Therefore,  
$S_z:=\sum_{(m, P_z)=1}a_m(T)$ gives an estimate of the number of
 all $v\in \mathcal O$ such that $f(v)$ is $R$-almost prime for $R= \text{(degree f)}/\eta$.

In order to verify these sieve axioms for $\mathcal O=v_0\G$, we replace $\G$ by its preimage under the spin cover $\tilde G$ of $G$, so that
$\G$ satisfies the strong approximation property that $\G(q)\ba \G=\tilde G(\z/q\z)$ outside a fixed finite set of primes.
The most crucial condition
is to understand the distribution of $a_m(T)$'s along the arithmetic progressions, i.e., $\sum_{m=0 \;( q)} a_m(T)$ for all square-free integers $q$, more precisely,
we need to have a uniform control on the remainder term
$r_q$ of $\cA_q(T)=\sum_{m=0 \;(q)} a_m= g(q)\X +r_q$ such as $r_q \ll \X^{1-\e}$ for some $\e >0$ independent of $q$.
By writing $$\cA_q(T)=\sum_{\gamma\in \G_{v_0}(q)\ba \G, f(v_0\gamma) =0 \; (q)} \# (v_0 \G_{v_0}(q)\gamma \cap \B_T)$$
 the following uniform counting estimates provide such a control on the remainder term:

\begin{Thm}  [Mohammadi-O. \cite{MO2}] \label{lastt} Let $\Gamma$ and $\B_T$ be as in Theorem \ref{efc}.
For any $\gamma\in \G$ and any square-free integer $q$,
$$\# v_0 \G(q) \gamma \cap \mathcal B_T = \frac{c_0}{[\G: \G(q)]} \mathcal M (\mathcal B_T) + O(\mathcal M (\mathcal B_T)^{1-\e})$$
where $c_0>0$ and $\e>0$ are independent over all $\gamma\in \G$ and $q$.
\end{Thm}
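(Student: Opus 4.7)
The plan is to derive the theorem as an effective, congruence-uniform incarnation of Theorem \ref{ifc}, proved by a quantitative equidistribution statement for translates of $H$-orbits in $\G(q)\ba G$, with the uniformity over $q$ furnished by the super-strong approximation of Theorem \ref{exp}. First I would reduce to a single moving base point: since $\G(q)$ is normal in $\G$, one has $v_0\G(q)\gamma=(v_0\gamma)\gamma^{-1}\G(q)\gamma=v_\gamma\G(q)$ with $v_\gamma:=v_0\gamma$, so the claim becomes an estimate for $\#\,v_\gamma\G(q)\cap\B_T$ uniform in the base point $v_\gamma\in v_0\G$. Writing $H=\op{Stab}_G(v_0)$ (a symmetric subgroup, as $F$ is a quadratic form) and identifying $v_0G$ with $H\ba G$, the problem is an effective, $q$-uniform version of the orbit count of Theorem \ref{ifc} applied to $\G(q)$.

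I would then run the standard smoothing/unfolding scheme. Pick a non-negative $\psi_\e\in C_c^\infty(G)$ supported in an $\e$-neighborhood of $e$ with unit mass, and sandwich $\#\,v_\gamma\G(q)\cap\B_T$ between smoothed counts of the form
\[
\int_{\G(q)\ba G}\sum_{\lambda\in(H\cap\G(q))\ba\G(q)}\chi^{\pm}_{\B_T,\e}(v_\gamma\lambda g)\,\psi_\e(g)\,dm^{\BR}_{\G(q)}(g),
\]
where $\chi^{\pm}_{\B_T,\e}$ are smooth inner and outer approximations of $1_{\B_T}$ whose discrepancy integrates to $O(\e^p\mathcal M(\B_T))$ by effective well-roundedness of $\{\B_T\}$. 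Unfolding the inner sum and slicing $\B_T$ transverse to the $a_t$-direction reduce matters to controlling averages
$\int_{(H\cap\G(q))\ba H}\Psi(h a_t g_\gamma)\,d\mu^{\PS}_H(h)$
for test functions $\Psi\in C_c^\infty(\G(q)\ba G)$ manufactured from $\psi_\e$ and the cutoff defining $\B_T$.

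The crux is then an effective equidistribution statement uniform over square-free $q$ and over $g_\gamma$:
\[
e^{(n-1-\delta)t}\!\!\int_{(H\cap\G(q))\ba H}\!\!\Psi(h a_t g_\gamma)\,d\mu^{\PS}_H(h)=\frac{\mu^{\PS}_H((H\cap\G(q))\ba H)}{m^{\BMS}_{\G(q)}(\G(q)\ba G)}\!\!\int\Psi\,dm^{\BR}_{\G(q)}+O\!\bigl(e^{-\eta t}\cS(\Psi)\bigr),
\]
proved in two stages: (i) effective mixing of the frame flow on $\G(q)\ba G$ against $m^{\BMS}_{\G(q)}$ with rate uniform in $q$; (ii) a thickening argument that converts BMS-mixing into BR-equidistribution of $H$-translates. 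For (i), Theorem \ref{exp} supplies a uniform Cayley-graph spectral gap for $\{\mathcal C(\G(q)\ba\G,S)\}$; a Brooks-Burger-type spectral transfer then upgrades this to a uniform lower bound on the gap between the base Laplace eigenvalue $\delta(n-1-\delta)$ and the remaining $L^2$-spectrum on $\G(q)\ba\Hn$, which under the hypothesis on $\delta$ survives into exponential frame-flow mixing with rate independent of $q$. Since $m^{\BMS}_{\G(q)}$ and $\mu^{\PS}_{H,\G(q)}$ are pulled back from $\G\ba G$ with masses scaling exactly as $[\G:\G(q)]$, the main-term constant is $c_0=\mu^{\PS}_H((H\cap\G)\ba H)/m^{\BMS}(\G\ba G)$, independent of $q$ and $\gamma$.

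Assembling the sandwich and optimizing $\e=\mathcal M(\B_T)^{-\sigma}$ for a suitable $\sigma>0$ yields the power-saving $\#\,v_\gamma\G(q)\cap\B_T=\tfrac{c_0}{[\G:\G(q)]}\mathcal M(\B_T)+O(\mathcal M(\B_T)^{1-\epsilon})$, with the implicit constant uniform in $q$ and $\gamma$. The main obstacle is the congruence-uniform spectral gap underlying stage (i): promoting the combinatorial expansion of $\mathcal C(\G(q)\ba\G,S)$ to a quantitative spectral gap on $L^2(\G(q)\ba\Hn)$ and then showing it survives into exponential frame-flow mixing, uniformly over square-free $q$. This is precisely where the hypotheses $\delta>(n-1)/2$ for $n=2,3$ and $\delta>n-2$ for $n\ge 4$ are consumed, since these are the thresholds at which current spectral-gap technology converts the expander gap into a usable effective rate in BMS mixing via the Patterson-Sullivan decomposition.
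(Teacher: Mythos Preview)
Your proposal is correct and follows essentially the same approach as the paper's sketch: a uniform spectral gap on $L^2(\G(q)\ba\bH^n)$ is obtained by transferring the expander property of Theorem~\ref{exp} (the paper attributes this transfer to Bourgain--Gamburd--Sarnak rather than labeling it Brooks--Burger-type), which under the stated hypothesis on $\delta$ yields effective mixing of the frame flow uniformly in $q$, and the standard smoothing/unfolding/equidistribution machinery of \S\ref{bmss} then converts this into the effective orbit count with power saving. The paper, being a survey citing \cite{MO2}, does not spell out the full argument, but your more detailed outline matches both the ingredients named in the paragraph following the theorem and the architecture of the proof in \cite{MO2}.
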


A basic ingredient of Theorem \ref{lastt} is a uniform spectral gap for the Laplacian acting on $L^2(\G(q)\ba \bH^n)$. Note that
zero is no more the base-eigenvalue of the Laplacian when $\G(q)$ is a thin group, but $\delta (n-1-\delta)$ is by 
Sullivan \cite{Sullivan1984} and
Lax-Phillips \cite{LP}.
 However, the expander result (Theorem \ref{exp})
implies a uniform lower bound for the gap between the base eigenvalue $\delta(n-1-\delta)$ and the next one; this transfer property was
obtained by Bourgain, Gamburd and Sarnak.
As explained in section \ref{bmss},
the mixing of frame flow of the Bowen-Margulis-Sullivan measure is a crucial ingredient in obtaining the main term in Theorem \ref{lastt},
and the (uniform) error term in the counting statement of Theorem \ref{lastt}
is again a consequence of a uniform error term in the effective mixing of frame flow, at least under our hypothesis on $\delta$.

\end{document}